\numberwithin{equation}{section}
\newtheorem{remark}{Remark}[section]
\newtheorem{example}{Example}[section]
\title{High-efficiency and positivity-preserving stabilized SAV methods for gradient flows.
        \thanks{
We would like to acknowledge the assistance of volunteers in putting together this example manuscript and supplement. This work is supported by National Natural Science Foundation of China (Grant Nos: 12001336, 12271302, 12131014) and Natural Science Outstanding Youth Fund of Shandong Province (Grant No: ZR2023YQ007).}}
      \author{Zhengguang Liu
            \thanks{School of Mathematics and Statistics, Shandong Normal University, Jinan, Shandong, 250358, China. Email: liuzhg@sdnu.edu.cn.}
             \and
         	Yanrong Zhang
         	 \thanks{Department of Applied Mathematics, The Hong Kong Polytechnic University, Hung Hom, Hong Kong. Email: yanrongzhang\_math@163.com.}
             \and
            Xiaoli Li
             \thanks{Corresponding author. School of Mathematics, Shandong University, Jinan, Shandong, 250100, China. Email: xiaolimath@sdu.edu.cn.}
             }
\begin{document}
\maketitle

\begin{abstract}
The scalar auxiliary variable (SAV)-type methods are very popular techniques for solving various nonlinear dissipative systems. Compared to the semi-implicit method, the baseline SAV method can keep a modified energy dissipation law but doubles the computational cost. The general SAV approach    
does not add additional computation but needs to solve a semi-implicit solution in advance, which may potentially compromise the accuracy and stability. In this paper, we construct a novel first- and second-order unconditional energy stable and positivity-preserving stabilized SAV (PS-SAV) schemes for $L^2$ and $H^{-1}$ gradient flows. The constructed schemes can reduce nearly half computational cost of the baseline SAV method and preserve its accuracy and stability simultaneously. Meanwhile, the introduced auxiliary variable is always positive while the baseline SAV cannot guarantee this positivity-preserving property. Unconditionally energy dissipation laws are derived for the proposed numerical schemes. We also establish a rigorous error analysis of the first-order scheme for the Allen-Cahn type equation in $l^{\infty}(0,T; H^1(\Omega) ) $ norm. 
In addition we propose an energy optimization technique to optimize the modified energy close to the original energy. Several interesting numerical examples are presented to demonstrate the accuracy and effectiveness of the proposed methods.  
\end{abstract}

\begin{keywords}
Scalar auxiliary variable, gradient flows, positivity-preserving, energy optimization, error analysis.
\end{keywords}

    \begin{AMS}
         65M12; 35K20; 35K35; 35K55; 65Z05
    \end{AMS}

\pagestyle{myheadings}
\thispagestyle{plain}
\markboth{Zhengguang Liu, Yanrong Zhang, Xiaoli Li} {Positivity-preserving Stabilized SAV Methods}
\section{Introduction}
The gradient flows are very important models in physics, engineering, materials science and mathematics that can accurately and effectively describe the complex interfacial behavior of multi-phase materials. Many modern scientific problems, such as multi-phase industrial alloy casting, metal additive manufacturing, shale oil and gas development, image processing, biomedicine, chip packaging, and many other practical applications can be described by corresponding gradient flow models \cite{chen2002phase,keller1970initiation,osher1988fronts,rudin1992nonlinear}. In recent years, they have also gained rapid development in many high-precision fields, such as integrated circuits, lithium-ion batteries, 3D printing, etc \cite{fallah2012phase,wang2020application,zuo2015phase}.

In this paper, we consider a gradient flow  with respect to the following free energy $E(\phi)$:
\begin{equation*}
\aligned
E(\phi)=\frac{\epsilon^2}{2}(A\phi,\phi)+\int_\Omega F(\phi(\textbf{x}))d\textbf{x},
\endaligned
\end{equation*}
where $\epsilon>0$ denotes the interfacial width, $A$ is a linear self-adjoint elliptic operator and $F(\phi)$ is a nonlinear potential functional.     
By introducing a chemical potential $\mu=\frac{\delta E}{\delta\phi}$, we can write the gradient flow as follows:
\begin{equation}\label{intro-e1}
   \begin{array}{l}
\displaystyle\frac{\partial \phi}{\partial t}=-M\mathcal{G}\mu,\\
\displaystyle\mu=\epsilon^2A\phi+F'(\phi).
   \end{array}
\end{equation}
with periodic or homogeneous Neumann boundary condition, and $\mathcal{G}$ is a positive definite operator. For instance, if we let the operator $\mathcal{G}=I$, $A=-\Delta$ and $F(\phi)=\frac14(\phi^2-1)^2$, the above gradient flow \eqref{intro-e1} will be the known Allen-Cahn model:
\begin{equation}\label{intro-e2}
   \begin{array}{l}
\displaystyle\frac{\partial \phi}{\partial t}=M\epsilon^2\Delta\phi-MF'(\phi).
   \end{array}
\end{equation}

The gradient flow is generally a high-order nonlinear partial differential equation, which is a complex system with energy dissipation law. However, it's very difficult to design efficient and energy stable numerical algorithms. In general, the fully explicit discrete scheme for the nonlinear term of the gradient flow \eqref{intro-e1} cannot preserve its physical constraints of the original system. Fully implicit schemes can guarantee the structure of the model, but such methods may require strict time-step restrictions to guarantee the unique solvability and need to solve nonlinear equations at each step, so they are not efficient in practice. The more widely used and effective methods mainly include convex splitting methods \cite{baskaran2013convergence,eyre1998unconditionally}, stabilization methods \cite{chen1998applications,shen2010numerical,xu2006stability}, exponential time-differencing (ETD) methods \cite{du2019maximum,du2021maximum,ju2018energy}, invariant energy quadratization (IEQ) methods \cite{yang2018linear,yang2017numerical,zhao2017numerical}, scalar auxiliary variable (SAV) methods \cite{huang2020highly,ShenA,shen2018scalar}, Lagrange multiplier methods \cite{cheng2020new,cheng2022new}, etc.

In recent years, the SAV-type methods have attracted much attention in numerical solutions for various nonlinear dissipative systems due to their inherent advantage of preserving energy dissipation law. 
In these SAV-type methods, the baseline SAV method \cite{shen2018scalar} can keep a modified energy dissipation law but doubles the computational cost compared with a semi-implicit approach. It has attracted a lot of attention and has been successfully applied to solve various kinds of complex nonlinear problems, such as various phase field models \cite{cheng2018multiple,cheng2019highly,hou2021robust,jiang2022improving,li2020stability,li2019energy,liu2020exponential}, the Navier-Stokes equation \cite{xiaoli2020error,lin2019numerical}, the Schrödinger equation \cite{antoine2021scalar}, the magnetohydrodynamic (MHD) equation \cite{li2022stability}, etc. The recently general SAV approach \cite{huang2020highly} does not add additional computation but needs to solve a semi-implicit solution in advance which may weaken the accuracy and stability. The main purpose of this paper is to construct a positivity-preserving stabilized SAV (PS-SAV) approach which enjoys the following advantages:

$\bullet$ The introduced scalar auxiliary variable always keeps a positive property, whereas the baseline SAV scheme fails to do so;

$\bullet$ It only requires solving one linear system with constant coefficients as opposed to the two linear systems by the baseline SAV approach, thus the computational cost of the proposed approach is essentially half that of the SAV approach;

$\bullet$ It provides an enhanced stability and accuracy compared to the GSAV approach, while maintaining nearly identical computational costs.

We prove the unconditional energy dissipation law for the proposed numerical schemes. Furthermore, a rigorous error analysis is derived for the fully-discrete finite difference method with first-order accuracy in time. 
In particular, it is important to note that the major difficulty in the error estimate is caused by the implicit treatment for $R_h$ and explicit discretization for $\Delta_h \phi_h$ in time. The essential tools used in the proof are unconditional energy dissipation law,  the induction process to give a first estimates for the phase function and show that the discrete $l^{\infty}$ norm of the numerical solution is uniformly bounded. Thus by establishing several auxiliary lemmas, we finally obtain the optimal convergence rates for the phase function in $l^{\infty}(0,T; H^1(\Omega) ) $ norm. We believe that our constructed schemes and optimal error estimate are the first linear, positivity-preserving and unconditionally energy stable method with implicit treatment for the scalar auxiliary variable. 
 
The rest of this paper is organized as follows. In Section 2, we provide a brief review of the SAV-type approaches such as the baseline SAV and GSAV methods for gradient flows. In Section 3, we present the first-order semi-discrete and fully discrete positivity-preserving stabilized SAV schemes for $L^2$ gradient flows together with the energy dissipation law and convergence analysis of the resulting fully discrete scheme. In Section 4, we extend the considered PS-SAV technique to construct second-order Crank-Nicloson scheme. A semi-discrete numerical scheme based on the PS-SAV approach for $H^{-1}$ gradient flow models is given in Section 5. In Section 6, an energy optimization technique is proposed to optimize the modified energy close to the original energy. In Section 7, we give some comparisons of the proposed PS-SAV approach with the baseline SAV and GSAV approaches to validate its high efficiency.

\section{A brief review of the SAV-type approaches}
In this section, we give a brief review of the SAV-type methods for the gradient flow \eqref{intro-e1} to better introduce our newly proposed methods. 
\subsection{The baseline SAV approach}
Assume the nonlinear free energy $E_1(\phi)=\int_\Omega F(\phi(\textbf{x}))d\textbf{x}$ is bound from below, that is $E_1(\phi)+C>0$ for some constant $C>0$. Let us introduce an auxiliary variable $q(t)=\sqrt{E_1(\phi)+C}$ and reformulate the gradient flow \eqref{intro-e1} to the following equivalent system:
\begin{equation}\label{SAV-e1}
   \begin{array}{l}
\displaystyle\frac{\partial \phi}{\partial t}=-M\mathcal{G}\mu,\\
\displaystyle\mu=\epsilon^2A\phi+\frac{q(t)}{\sqrt{E(\phi)+C}}F'(\phi),\\
\displaystyle\frac{dq}{dt}=\frac{1}{2\sqrt{E(\phi)+C}}(F'(\phi),\frac{\partial \phi}{\partial t}).
   \end{array}
\end{equation}  

Before giving a semi-discrete formulation, we let $N>0$ be a positive integer and set
\begin{equation*}
\Delta t=T/N,\quad t^n=n\Delta t,\quad \text{for}\quad n\leq N.
\end{equation*}

Then we give the following first-order SAV scheme:
\begin{equation}\label{SAV-e2}
   \begin{array}{l}
\displaystyle\frac{\phi^{n+1}-\phi^n}{\Delta t}=-M\mathcal{G}\mu^{n+1},\\
\displaystyle\mu^{n+1}=\epsilon^2A\phi^{n+1}+\frac{q^{n+1}}{\sqrt{E(\phi^n)+C}}F'(\phi^n),\\
\displaystyle\frac{q^{n+1}-q^n}{\Delta t}=\frac{1}{2\sqrt{E(\phi^n)+C}}(F'(\phi^n),\frac{\phi^{n+1}-\phi^n}{\Delta t}).
   \end{array}
\end{equation} 

The scheme \eqref{SAV-e2} is unconditionally energy stable in the sense that:
\begin{equation*}
\left(\frac{\epsilon^2}{2}(A\phi^{n+1},\phi^{n+1})+|q^{n+1}|^2\right)-\left(\frac{\epsilon^2}{2}(A\phi^{n},\phi^{n})+|q^{n}|^2\right)\leq-M\Delta t(\mathcal{G}\mu^{n+1},\mu^{n+1})\leq0.
\end{equation*}

The above first-order SAV scheme requires the solution of two linear systems with constant coefficients at each time step. The unknown $q^{n+1}$ and $\phi^{n+1}$ can be calculated decoupled. By setting $\phi^{n+1}=\phi_1^{n+1}+q^{n+1}\phi_2^{n+1}$ 
, we find that $\phi_1^{n+1}$ and $\phi_2^{n+1}$ are solutions of the following two linear equations with constant coefficients:
\begin{equation*}
\left(I+M\Delta t\epsilon^2\mathcal{G}A\right)\phi_1^{n+1}=\phi^n,\quad \left(I+M\Delta t\epsilon^2\mathcal{G}A\right)\phi_2^{n+1}=-\frac{M\Delta t}{\sqrt{E(\phi^n)+C}}\mathcal{G}F'(\phi^n).
\end{equation*}
Once $\phi_1^{n+1}$ and $\phi_2^{n+1}$ are known, we can determine $q^{n+1}$ explicitly by the following equation:
\begin{equation*}
\left[1-\frac{1}{2\sqrt{E(\phi^n)+C}}(F'(\phi^n),\phi_2^{n+1})\right]q^{n+1}=q^n+\frac{1}{2\sqrt{E(\phi^n)+C}}(F'(\phi^n),\phi_1^{n+1}).
\end{equation*}
\begin{remark}\label{sav-lemma1}
The unknown variables $q^{n+1}$ and $\phi^{n+1}$ in the SAV scheme \eqref{SAV-e2} can be calculated decoupled.  It requires solving two linear equations with constant coefficients at each time step, so its computational cost is essentially double of the semi-implicit approach. 
\end{remark}
\subsection{The general SAV approach}
To reduce the computational cost, Shen et al. \cite{huang2020highly} considered a general SAV approach that is based on a semi-implicit correction. Firstly, assume that the free energy $E(\phi)$ is bounded from below which means $E(\phi)+C>0$ for a positive constant $C$. Introduce a scalar variable $R(t)=E(\phi)+C$ and rewrite the gradient flow \eqref{intro-e1} as the following equivalent system:
\begin{equation}\label{gSAV-e1}
   \begin{array}{l}
\displaystyle\frac{\partial \phi}{\partial t}=-M\mathcal{G}\mu,\\
\displaystyle\mu=\epsilon^2A\phi+F'(\phi),\\
\displaystyle\xi=\frac{R(t)}{E(\phi)+C},\\
\displaystyle\frac{dR}{dt}=-M\xi(\mu,\mu).
   \end{array}
\end{equation}
It is not difficult to obtain the following modified energy dissipation law for above equivalent system:
\begin{equation*}
\aligned
\frac{dR(t)}{dt}=\frac{d}{dt}(E(\phi)+C)=-M\xi(\mathcal{G}\mu,\mu)\leq0.
\endaligned
\end{equation*}

We discretisize the state variable $\phi$ and the introducing variable $R$ implicitly and discretisize the
energy density function $F'(\phi)$ explicitly to obtain the following $k$th-order implicit-explicit (IMEX) schemes:
\begin{equation}\label{gSAV-e2}
   \begin{array}{l}
\displaystyle\frac{\alpha_k\overline{\phi}^{n+1}-\beta_k(\phi^n)}{\Delta t}=-M\mathcal{G}\overline{\mu}^{n+1},\\
\displaystyle\overline{\mu}^{n+1}=\epsilon^2A\overline{\phi}^{n+1}+F'(\widehat{\phi}^{n+1}),\\
\displaystyle\xi^{n+1}=\frac{R^{n+1}}{E(\widehat{\phi}^{n+1})+C},\\
\displaystyle\frac{R^{n+1}-R^n}{\Delta t}=-M\xi^{n+1}(\mathcal{G}\overline{\mu}^{n+1},\overline{\mu}^{n+1}),\\
\phi^{n+1}=\left[1-(1-\xi^{n+1})^{k+1}\right]\overline{\phi}^{n+1}.
   \end{array}
\end{equation}
Here $\alpha_k$, $\beta_k$ and $\widehat{\phi}^{n+1}$ are different for $k$th-order schemes. For example, they can be defined as follows:

First-order:
\begin{equation*}
\aligned
\alpha_k=1,\quad \beta_k(\phi^n)=\phi^n,\quad \widehat{\phi}^{n+1}=\phi^n,
\endaligned
\end{equation*}

Second-order:
\begin{equation*}
\aligned
\alpha_k=\frac32,\quad \beta_k(\phi^n)=2\phi^n-\frac12\phi^{n-1},\quad \widehat{\phi}^{n+1}=2\phi^n-\phi^{n-1}.
\endaligned
\end{equation*}
For more details, please see \cite{zhang2022generalized}.

The above numerical schemes \eqref{gSAV-e2} is unconditional energy stable with a modified energy $\mathcal{E}=R^{n+1}-C$ to keep $R^{n+1}\leq R^n$.
\begin{remark}\label{gsav-lemma1}
The $k$th-order GSAV scheme \eqref{gSAV-e2} requires solving only one linear equation with constant coefficients at each time step. However, it requires a semi-implicit solution in advance at each time step, which may weaken its stability and accuracy. In practical calculations, it may be necessary to use smaller time steps to achieve long time simulations.
\end{remark}
\section{A positivity-preserving stabilized SAV (PS-SAV) method}
In this section, we consider a positivity-preserving stabilized SAV (PS-SAV) method for solving the gradient flow \eqref{intro-e1} effectively. This new proposed method holds the positivity-preserving property of the introduced auxiliary variable. Meanwhile, it reduces the computational cost of the baseline SAV method and preserve its accuracy and stability. We first consider the semi-discrete and fully discrete schemes based on PS-SAV method for the $L^2$ gradient flow.

\subsection{The $L^2$ gradient flow}
Firstly, we set $\mathcal{G}=I$ to transform the gradient flow \eqref{intro-e1} into the following $L^2$ gradient flow:
\begin{equation}\label{L2-gradient-flow}
   \begin{array}{l}
\displaystyle\frac{\partial \phi}{\partial t}=-M\mu,\\
\displaystyle\mu=\epsilon^2A\phi+F'(\phi).
   \end{array}
\end{equation}

Similar as the general SAV approach, we also assume $E(\phi)+C>0$ for a positive constant $C$ and introduce a same scalar variable $R(t)=E(\phi)+C$. Then, we change the third equation in the equivalent system \eqref{SAV-e1} by the following formulation:
\begin{equation}\label{PSSAV-e1}
\displaystyle\frac{dR}{dt}=\frac{dE}{dt}=(\frac{\delta E}{\delta\phi},\frac{\partial \phi}{\partial t})=(\mu,\frac{\partial \phi}{\partial t})=-\frac{1}{M}(\frac{\partial \phi}{\partial t},\frac{\partial \phi}{\partial t}).
\end{equation}

Combining above equation \eqref{PSSAV-e1} with the $L^2$ gradient flow \eqref{L2-gradient-flow}, we can reformulate it to the following equivalent system:
\begin{equation}\label{PSSAV-e2}
   \begin{array}{l}
\displaystyle\frac{\partial \phi}{\partial t}=-M\mu,\\
\displaystyle\mu=\frac{R(t)}{E(\phi)+C}\left(\epsilon^2A\phi+F'(\phi)\right),\\
\displaystyle\frac{dR}{dt}=-\frac{1}{M}(\frac{\partial \phi}{\partial t},\frac{\partial \phi}{\partial t}).
   \end{array}
\end{equation}
Obviously the third equation in \eqref{PSSAV-e2} can keep the energy dissipation law.

Based on such an equivalent form \eqref{PSSAV-e2}, we next give the first-order semi-discrete PS-SAV scheme.
\subsection{First-order semi-discrete PS-SAV scheme} 
A first-order positivity-preserving stabilized SAV scheme based on backward Euler formulation is given by:
\begin{equation}\label{PSSAV-e3}
   \begin{array}{l}
\displaystyle\frac{\phi^{n+1}-\phi^n}{\Delta t}=-M\mu^{n+1},\\
\displaystyle\mu^{n+1}=s\epsilon^2(A\phi^{n+1}-A\phi^{n})+\frac{R^{n+1}}{E(\phi^n)+C}\left[\epsilon^2A\phi^{n}+F'(\phi^n)\right],\\
\displaystyle \frac{R^{n+1}-R^n}{\Delta t}=\displaystyle-\frac{1}{M}(\frac{\phi^{n+1}-\phi^n}{\Delta t},\frac{\phi^{n+1}-\phi^n}{\Delta t}),
   \end{array}
\end{equation}
where $s>0$ is a stabilizing constant.

From the first two equations in \eqref{PSSAV-e3}, we can rewrite \eqref{PSSAV-e3} equivalently as the following formulation:
\begin{equation}\label{PSSAV-e4}
   \begin{array}{l}
\displaystyle(E(\phi^n)+C)(I+Ms\epsilon^2\Delta tA)\frac{\phi^{n+1}-\phi^n}{\Delta t}=-MR^{n+1}\left[\epsilon^2A\phi^{n}+F'(\phi^n)\right],\\
\displaystyle \frac{M}{\Delta t}(R^{n+1}-R^n)=\displaystyle-(\frac{\phi^{n+1}-\phi^n}{\Delta t},\frac{\phi^{n+1}-\phi^n}{\Delta t}).
   \end{array}
\end{equation}
Setting $\phi^{n+1}=\phi^{n}+\Delta tR^{n+1}\phi_1^{n+1}$, we find that $\phi_1^{n+1}$ is the solution of the following linear equation with constant coefficients:
\begin{equation}\label{PSSAV-e5}
\displaystyle(E(\phi^n)+C)(I+Ms\epsilon^2\Delta tA)\phi_1^{n+1}=-M\left[\epsilon^2A\phi^{n}+F'(\phi^n)\right].
\end{equation}
Once $\phi_1^{n+1}$ is known, noting that  
\begin{equation}\label{PSSAV-e6}
\displaystyle\phi^{n+1}-\phi^n=\Delta tR^{n+1}\phi_1^{n+1},
\end{equation}
and combining it with the second equation in \eqref{PSSAV-e4}, we obtain
\begin{equation}\label{PSSAV-e7}
\displaystyle(\phi_1^{n+1},\phi_1^{n+1})(R^{n+1})^2+\frac{M}{\Delta t}R^{n+1}-\frac{M}{\Delta t}R^n=0.
\end{equation}
If $\phi_1^{n+1}=0$, we obtain $(\phi_1^{n+1},\phi_1^{n+1})=0$. Then we directly get $\phi^{n+1}=\phi^n$ and $R^{n+1}=R^n$. If $\phi_1^{n+1}\neq0$, we obtain $(\phi_1^{n+1},\phi_1^{n+1})\neq0$. The above equation \eqref{PSSAV-e6} is a quadratic equation with one variable for $R^{n+1}$. 
\begin{theorem}\label{PSSAV-theorem1}
The quadratic equation with one variable for $R^{n+1}$ \eqref{PSSAV-e6} has and only one positive solution:
\begin{equation}\label{PSSAV-e8}
\displaystyle R^{n+1}=\frac{-\frac{M}{\Delta t}+\sqrt{\frac{M^2}{\Delta t^2}+4\frac{M}{\Delta t}R^n(\phi_1^{n+1},\phi_1^{n+1})}}{2(\phi_1^{n+1},\phi_1^{n+1})}>0.
\end{equation}
\end{theorem}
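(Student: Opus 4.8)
The plan is to treat the quadratic equation \eqref{PSSAV-e7} as a standard quadratic $ax^2+bx+c=0$ in the unknown $x=R^{n+1}$, with coefficients $a=(\phi_1^{n+1},\phi_1^{n+1})$, $b=M/\Delta t$, and $c=-(M/\Delta t)R^n$, and to read off the root structure from the signs of these coefficients together with Vieta's relations. First I would record the signs: in the case under consideration $\phi_1^{n+1}\neq0$, so $a=(\phi_1^{n+1},\phi_1^{n+1})>0$, and $b=M/\Delta t>0$ is immediate. The only nonobvious ingredient is the strict positivity of $R^n$, which I would establish by induction: since $R(t)=E(\phi)+C>0$ by the standing assumption, the initial value $R^0=E(\phi^0)+C>0$, and the formula \eqref{PSSAV-e8} will itself show that $R^{n+1}>0$ whenever $R^n>0$. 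Granting $R^n>0$, we have $c=-(M/\Delta t)R^n<0$.

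Next I would analyze the roots. The product of the two roots equals $c/a<0$, so the roots are real, distinct, and of opposite sign; hence exactly one of them is positive, which already secures the uniqueness assertion. To identify it explicitly, I would compute the discriminant $b^2-4ac=M^2/\Delta t^2+4(M/\Delta t)R^n(\phi_1^{n+1},\phi_1^{n+1})$, which is strictly larger than $b^2$ because $-4ac>0$; consequently $\sqrt{b^2-4ac}>b>0$. It follows that the root taken with the plus sign, $(-b+\sqrt{b^2-4ac})/(2a)$, is strictly positive, while the minus-sign root $(-b-\sqrt{b^2-4ac})/(2a)$ is strictly negative. Substituting the explicit coefficients yields exactly the expression \eqref{PSSAV-e8} and confirms it is the unique positive solution.

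I do not expect a genuine obstacle here, as the argument is elementary quadratic-formula bookkeeping. The only step that deserves care is making the inductive positivity of $R^n$ explicit, since this is precisely the positivity-preserving feature the scheme is designed to enforce: it is what guarantees $c<0$ and hence the opposite-sign structure of the roots. Everything else follows from the sign analysis above.
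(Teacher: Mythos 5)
Your proposal is correct and follows essentially the same route as the paper: both arguments proceed by inductively assuming $R^n>0$ (anchored at $R^0=E(\phi_0)+C>0$), observing that the discriminant $\frac{M^2}{\Delta t^2}+4\frac{M}{\Delta t}R^n(\phi_1^{n+1},\phi_1^{n+1})$ strictly exceeds $\frac{M^2}{\Delta t^2}$, and then reading off from the explicit quadratic formula that the two roots have opposite signs, so the plus-sign root is the unique positive solution. Your appeal to Vieta's relation $c/a<0$ is a minor stylistic addition that makes the opposite-sign structure explicit before writing the roots, but it does not change the substance of the argument.
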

\begin{proof}
Noting that $R^{0}=E(\phi_0)+C>0$, then we assume that $R^n>0$. The quadratic equation \eqref{PSSAV-e6} is determined to have a solution because of 
\begin{equation*}
\displaystyle\Delta=\frac{M^2}{\Delta t^2}+4\frac{M}{\Delta t}R^n(\phi_1^{n+1},\phi_1^{n+1})>\frac{M^2}{\Delta t^2}>0.
\end{equation*}
One can obviously see that \eqref{PSSAV-e6} has the following two solutions:
\begin{equation*}
\begin{array}{l}
\displaystyle R_1^{n+1}=\frac{-\frac{M}{\Delta t}-\sqrt{\frac{M^2}{\Delta t^2}+4\frac{M}{\Delta t}R^n(\phi_1^{n+1},\phi_1^{n+1})}}{2(\phi_1^{n+1},\phi_1^{n+1})}<0,\\
\displaystyle R_2^{n+1}=\frac{-\frac{M}{\Delta t}+\sqrt{\frac{M^2}{\Delta t^2}+4\frac{M}{\Delta t}R^n(\phi_1^{n+1},\phi_1^{n+1})}}{2(\phi_1^{n+1},\phi_1^{n+1})}>0.
\end{array}
\end{equation*}
By the positive property of $R$, we have that $R^{n+1}=R_2^{n+1}$.
\end{proof}

Then we can obtain $\phi^{n+1}$ directly by the following equation:
\begin{equation}\label{PSSAV-e9}
\displaystyle\phi^{n+1}=\phi^n+\Delta tR^{n+1}\phi_1^{n+1}.
\end{equation}

To summarize, the first-order PS-SAV scheme \eqref{PSSAV-e3} can be implemented as follows:
\begin{itemize}
  \item solve $\phi_1^{n+1}$ from \eqref{PSSAV-e5};
  \item compute $R^{n+1}$ from \eqref{PSSAV-e8};
  \item update $\phi^{n+1}=\phi^n+\Delta tR^{n+1}\phi_1^{n+1}$ and goto next time step.
\end{itemize}

We observe that the above procedure only requires solving one linear equation with constant coefficients as in a semi-implicit scheme with stabilization. As for the energy stability, we have the following result.
\begin{theorem}\label{PSSAV-theorem2}
Given $R^0>0$, we have $R^n>0$, and the first-order PS-SAV scheme \eqref{PSSAV-e3} is unconditionally energy stable in the sense that
\begin{equation*}
R^{n+1}-R^{n}=\displaystyle-\frac{\Delta t}{M}(\frac{\phi^{n+1}-\phi^n}{\Delta t},\frac{\phi^{n+1}-\phi^n}{\Delta t})\leq0.
\end{equation*}
\end{theorem}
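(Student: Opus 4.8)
The plan is to separate the statement into its two assertions: the positivity $R^n>0$ for all $n$, and the dissipation identity together with its sign. The positivity I would establish by induction on $n$. The base case $R^0=E(\phi_0)+C>0$ is given. For the inductive step, assuming $R^n>0$, I would distinguish the two cases already isolated just before Theorem \ref{PSSAV-theorem1}: if $\phi_1^{n+1}=0$ then $(\phi_1^{n+1},\phi_1^{n+1})=0$ and the scheme forces $\phi^{n+1}=\phi^n$ and $R^{n+1}=R^n>0$; otherwise $(\phi_1^{n+1},\phi_1^{n+1})>0$, the quadratic \eqref{PSSAV-e7} is nondegenerate, and Theorem \ref{PSSAV-theorem1} selects the unique admissible root $R^{n+1}>0$ given by \eqref{PSSAV-e8}. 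In either case $R^{n+1}>0$, which closes the induction.

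For the dissipation law I would simply multiply the third equation of the scheme \eqref{PSSAV-e3} by $\Delta t$ to obtain
\begin{equation*}
R^{n+1}-R^n=-\frac{\Delta t}{M}\left(\frac{\phi^{n+1}-\phi^n}{\Delta t},\frac{\phi^{n+1}-\phi^n}{\Delta t}\right),
\end{equation*}
which is exactly the claimed equality. The inequality then follows at once: since $M>0$ and $\Delta t>0$, while the right-hand inner product has the form $(v,v)\geq0$ with $v=(\phi^{n+1}-\phi^n)/\Delta t$, the entire right-hand side is nonpositive, so $R^{n+1}-R^n\leq0$.

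There is essentially no serious obstacle here, since the energy dissipation is built directly into the third equation of the scheme and the positivity is supplied by the root-selection argument of Theorem \ref{PSSAV-theorem1}. The only point that genuinely requires care is to phrase the argument as a bona fide induction: one must verify that it is precisely the positivity of $R^n$ that licenses invoking Theorem \ref{PSSAV-theorem1} at step $n+1$, and that the degenerate case $\phi_1^{n+1}=0$ is treated separately, so that the inductive hypothesis propagates for every $\Delta t>0$ without any step-size restriction, thereby yielding the unconditional character of the stability.
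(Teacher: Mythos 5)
Your proposal is correct and follows exactly the route the paper intends: the paper omits an explicit proof precisely because the dissipation identity is the third equation of \eqref{PSSAV-e3} multiplied by $\Delta t$, and the positivity is the inductive content of Theorem \ref{PSSAV-theorem1} (including the degenerate case $\phi_1^{n+1}=0$). Your added care in making the induction explicit is a welcome clarification but not a departure from the paper's argument.
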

\subsection{Spacial discretization}
In this subsection, we consider a fully discrete scheme based on the proposed PS-SAV approach by applying finite difference method for the spacial discretization for the Allen-Cahn type model. For simplicity, we consider the two-dimensional square domain $\Omega=(0,L)\times(0,L)$ with periodic boundary conditions. 

We set $h=L/N_{xy}$ to be size of the uniform mesh where $N_{xy}$ is a positive integer. The grid points are denoted by $(x_i.y_j)=(ih,jh)$ for $1\leq i,j\leq N_{xy}$. The discrete Laplace operator $\Delta_h$ is defined by 
\begin{equation*}
\Delta_hu_{i,j}=\frac{1}{h^2}(u_{i+1,j}+u_{i,j+1}+u_{i-1,j}+u_{i,j-1}-4u_{i,j}),
\end{equation*}
and the discrete gradient operator $\nabla_h$ is defined by 
\begin{equation*}
\aligned
\nabla_hu_{i,j} = & (\frac{u_{i+1,j}-u_{i,j}}{h},\frac{u_{1,j+1}-u_{i,j}}{h}) \\
: = & ( \nabla_h^1 u_{i+1/2,j}, \nabla_h^2 u_{i,j+1/2} ).
\endaligned
\end{equation*}

Define the discrete inner products and norms are
\begin{equation*}
\aligned
& (u,v)_m = h^2 \sum\limits_{i,j=1}^{ N_{xy} } u_{i,j} v_{i,j} , \ \|u \|_m^2 = (u,u)_m, \\ 
& (u,v)_x = h^2 \sum\limits_{i=0}^{ N_{xy}-1 }  \sum\limits_{j=1}^{ N_{xy} } u_{i+1/2,j} v_{i+1/2,j}, \\
& (u,v)_y=h^2 \sum\limits_{i=1}^{ N_{xy} }  \sum\limits_{j=0}^{ N_{xy} -1} u_{i,j+1/2} v_{i,j+1/2},  \\
& \|\nabla_h u\|_{TM}^2 = ( \nabla_h^1 u, \nabla_h^1 u )_x + ( \nabla_h^2 u, \nabla_h^2 u)_y .
\endaligned
\end{equation*} 

The following discrete-integration-by-part formula plays an important role in the analysis:
\begin{equation}\label{gradient-inner}
\aligned
(u,\Delta_hv)_m=- \left[( \nabla_h^1 u, \nabla_h^1 u )_x + ( \nabla_h^2 u, \nabla_h^2 u)_y\right]= (\Delta_hu,v)_m.
\endaligned
\end{equation}
A first-order fully discrete PS-SAV scheme for the Allen-Cahn type model is given by:
\begin{equation}\label{fully-scheme1}
   \begin{array}{l}
\displaystyle\frac{\phi_h^{n+1}-\phi_h^n}{\Delta t}=-M\mu_h^{n+1},\\
\displaystyle\mu_h^{n+1}=-s\epsilon^2(\Delta_h\phi_h^{n+1}-\Delta_h\phi_h^{n})+\frac{R^{n+1}}{E(\phi^n)+C}\left[-\epsilon^2\Delta_h\phi_h^{n}+F'(\phi_h^n)\right],\\
\displaystyle \frac{R_h^{n+1}-R_h^n}{\Delta t}=\displaystyle-\frac{1}{M}(\frac{\phi_h^{n+1}-\phi_h^n}{\Delta t},\frac{\phi_h^{n+1}-\phi_h^n}{\Delta t}),
   \end{array}
\end{equation}
where $s>0$ is a stabilizing constant.

Similar as semi-discrete scheme \eqref{PSSAV-e3}, we are easy to obtain the following energy dissipation law.
\begin{theorem}\label{fully-scheme-theorem1}
Given $R_h^0>0$, we have $R_h^n>0$, and the first-order fully discrete PS-SAV scheme \eqref{fully-scheme1} is unconditionally energy stable in the sense that
\begin{equation*}
R_h^{n+1}-R_h^{n}=\displaystyle-\frac{\Delta t}{M}(\frac{\phi_h^{n+1}-\phi_h^n}{\Delta t},\frac{\phi_h^{n+1}-\phi_h^n}{\Delta t})\leq0.
\end{equation*}
\end{theorem}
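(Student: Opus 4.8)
The plan is to transcribe the semi-discrete argument behind Theorem~\ref{PSSAV-theorem2} verbatim into the fully discrete setting, splitting the statement into two independent assertions: first the positivity $R_h^n>0$, which will follow by induction on $n$ from an exact discrete analog of Theorem~\ref{PSSAV-theorem1}, and then the dissipation identity together with its sign, which will come directly from the third line of \eqref{fully-scheme1}. The base case is supplied by hypothesis, $R_h^0=E(\phi_h^0)+C>0$, using the boundedness-from-below assumption on $E$.

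For the positivity step I would first decouple the scheme as in \eqref{PSSAV-e4}--\eqref{PSSAV-e5}. Eliminating $\mu_h^{n+1}$ between the first two lines of \eqref{fully-scheme1} and multiplying by $E(\phi_h^n)+C$ gives
\begin{equation*}
(E(\phi_h^n)+C)\,(I-Ms\epsilon^2\Delta t\,\Delta_h)\,\frac{\phi_h^{n+1}-\phi_h^n}{\Delta t}=-MR_h^{n+1}\left[-\epsilon^2\Delta_h\phi_h^n+F'(\phi_h^n)\right].
\end{equation*}
The ansatz $\phi_h^{n+1}=\phi_h^n+\Delta t\,R_h^{n+1}\phi_{1,h}^{n+1}$ then reduces this to the constant-coefficient linear system
\begin{equation*}
(E(\phi_h^n)+C)\,(I-Ms\epsilon^2\Delta t\,\Delta_h)\,\phi_{1,h}^{n+1}=-M\left[-\epsilon^2\Delta_h\phi_h^n+F'(\phi_h^n)\right],
\end{equation*}
which is uniquely solvable because, by the discrete integration-by-parts formula \eqref{gradient-inner}, $-\Delta_h$ is self-adjoint and positive semidefinite with respect to $(\cdot,\cdot)_m$, so $I-Ms\epsilon^2\Delta t\,\Delta_h=I+Ms\epsilon^2\Delta t\,(-\Delta_h)$ is symmetric positive definite (recall $M,s,\epsilon^2,\Delta t>0$ and $E(\phi_h^n)+C>0$). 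Substituting $\phi_h^{n+1}-\phi_h^n=\Delta t\,R_h^{n+1}\phi_{1,h}^{n+1}$ into the third line of \eqref{fully-scheme1} produces the scalar quadratic
\begin{equation*}
(\phi_{1,h}^{n+1},\phi_{1,h}^{n+1})_m\,(R_h^{n+1})^2+\frac{M}{\Delta t}R_h^{n+1}-\frac{M}{\Delta t}R_h^n=0.
\end{equation*}

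With the inductive hypothesis $R_h^n>0$ I would close the positivity step exactly as in Theorem~\ref{PSSAV-theorem1}: if $\phi_{1,h}^{n+1}=0$ then $\phi_h^{n+1}=\phi_h^n$ and $R_h^{n+1}=R_h^n>0$; otherwise $(\phi_{1,h}^{n+1},\phi_{1,h}^{n+1})_m=\|\phi_{1,h}^{n+1}\|_m^2>0$, the discriminant $M^2/\Delta t^2+4(M/\Delta t)R_h^n\|\phi_{1,h}^{n+1}\|_m^2$ is strictly positive, the two roots have opposite signs, and the positivity requirement selects the unique positive one,
\begin{equation*}
R_h^{n+1}=\frac{-\frac{M}{\Delta t}+\sqrt{\frac{M^2}{\Delta t^2}+4\frac{M}{\Delta t}R_h^n(\phi_{1,h}^{n+1},\phi_{1,h}^{n+1})_m}}{2(\phi_{1,h}^{n+1},\phi_{1,h}^{n+1})_m}>0.
\end{equation*}
This completes the induction and yields $R_h^n>0$ for every $n$. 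The dissipation law is then immediate and needs no positivity: multiplying the third equation of \eqref{fully-scheme1} by $\Delta t$ gives the stated identity, and since $(\cdot,\cdot)_m$ is a genuine inner product we have $(\tfrac{\phi_h^{n+1}-\phi_h^n}{\Delta t},\tfrac{\phi_h^{n+1}-\phi_h^n}{\Delta t})_m=\|\tfrac{\phi_h^{n+1}-\phi_h^n}{\Delta t}\|_m^2\ge0$, so the right-hand side is $\le0$ because $M,\Delta t>0$. Since all the structural work was already carried out at the semi-discrete level, I do not expect a genuine obstacle; the only points requiring care are the invertibility of $I-Ms\epsilon^2\Delta t\,\Delta_h$ and the nonnegativity of the discrete self-product $(\phi_{1,h}^{n+1},\phi_{1,h}^{n+1})_m$, both of which rest on \eqref{gradient-inner} and replace the corresponding Hilbert-space facts used in the proof of Theorem~\ref{PSSAV-theorem2}.
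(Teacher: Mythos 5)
Your proposal is correct and follows exactly the route the paper intends: it transcribes the semi-discrete argument of Theorems \ref{PSSAV-theorem1}--\ref{PSSAV-theorem2} into the discrete setting (the paper itself gives no separate proof, simply invoking the analogy), with the only genuinely new ingredients being the positive semidefiniteness of $-\Delta_h$ via \eqref{gradient-inner} and the nonnegativity of $(\cdot,\cdot)_m$, both of which you identify and justify. The quadratic for $R_h^{n+1}$, the sign analysis of its roots under the inductive hypothesis $R_h^n>0$, and the immediate dissipation identity from the third equation of \eqref{fully-scheme1} all match the paper's reasoning.
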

\subsection{Error estimates}
In this subsection, we will derive error estimates for the proposed first-order fully discrete PS-SAV scheme \eqref{fully-scheme1} applied to Allen-Cahn type equation. 

For simplicity, we set 
\begin{equation*}
e_\phi^{n+1}=\phi_h^{n+1}-\phi(t^{n+1}),\quad e_\mu^{n+1}=\mu_h^{n+1}-\mu(t^{n+1}),\quad e_R^{n+1}=R_h^{n+1}-R(t^{n+1}).
\end{equation*}

\begin{theorem}\label{err-estimate}
Assume $ \phi \in W^{2,\infty}(0,T; L^2(\Omega)) \bigcap  W^{1,\infty}(0,T; W^{2,2}(\Omega))  \bigcap  L^{\infty}(0,T; W^{4,\infty}(\Omega))$ and $F(\phi)\in C^2(\mathbb{R})$, then for the fully discrete scheme \eqref{fully-scheme1} with stabilizing constant $s\geq \frac{R^0}{2K_1}$, there exists a positive constant $C$ independent $h$ and $\Delta t$ such that
\begin{equation*}
\aligned
&\displaystyle\sum\limits_{n=1}^{k}\Delta t\|d_t e_\phi^{n+1}\|^2_{m} + \|\nabla_he_\phi^{k+1}\|_{TM}^2+\|e_\phi^{k+1}\|_m^2+|e_R^{k+1}|^2\leq C(h^4+\Delta t^2),
\endaligned
\end{equation*}  
where the positive constant $K_1$ is the lower bound of $E_h(\phi_h^n)+C$.
\end{theorem}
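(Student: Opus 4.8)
The plan is to compare scheme \eqref{fully-scheme1} with the equivalent system \eqref{PSSAV-e2} evaluated along the exact trajectory, derive three coupled error equations, establish a uniform discrete maximum-norm bound on $\phi_h^n$ by induction, and then close a discrete energy inequality with a discrete Gronwall argument. First I would insert the exact solution into \eqref{fully-scheme1}. Since $R(t)/(E(\phi)+C)\equiv1$ along the exact trajectory, the truncation residuals $\tau_\phi^{n+1}$ and $\tau_R^{n+1}$ come only from the backward-Euler quotient, the explicit freezing of $-\epsilon^2\Delta_h\phi^n+F'(\phi^n)$ at $t^n$, the stabilizing increment $-s\epsilon^2\Delta_h(\phi(t^{n+1})-\phi(t^n))$, and the $O(h^2)$ consistency of $\Delta_h$; under the stated regularity each is $O(\Delta t+h^2)$ in $\|\cdot\|_m$. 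Subtracting yields $d_te_\phi^{n+1}=-Me_\mu^{n+1}+\tau_\phi^{n+1}$ together with a companion equation for $e_R^{n+1}$, while $e_\mu^{n+1}$ splits, after writing $\xi_h^{n+1}:=R_h^{n+1}/(E_h(\phi_h^n)+C)=1+(\xi_h^{n+1}-1)$ and using $F'(\phi_h^n)-F'(\phi(t^n))=F''(\zeta^n)e_\phi^n$, into the stabilizer $-s\epsilon^2\Delta_h(e_\phi^{n+1}-e_\phi^n)$, the explicit gradient term $-\xi_h^{n+1}\epsilon^2\Delta_h e_\phi^n$, a reaction term, a scalar-deviation term $(\xi_h^{n+1}-1)\tilde\mu^n$ with $\tilde\mu^n=-\epsilon^2\Delta_h\phi(t^n)+F'(\phi(t^n))$, and an $O(\Delta t+h^2)$ remainder.

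Next I would pair the $e_\phi$-equation with $d_te_\phi^{n+1}$ in $(\cdot,\cdot)_m$. Writing $(\nabla_h u,\nabla_h v)_{TM}:=(\nabla_h^1u,\nabla_h^1v)_x+(\nabla_h^2u,\nabla_h^2v)_y$ and using \eqref{gradient-inner} together with the polarization identity $(\nabla_ha,\nabla_h(b-a))_{TM}=\tfrac12(\|\nabla_hb\|_{TM}^2-\|\nabla_ha\|_{TM}^2-\|\nabla_h(b-a)\|_{TM}^2)$, the two Laplacian contributions combine into $\tfrac{\xi_h^{n+1}\epsilon^2}{2\Delta t}(\|\nabla_he_\phi^{n+1}\|_{TM}^2-\|\nabla_he_\phi^{n}\|_{TM}^2)$ plus the increment term $\tfrac{\epsilon^2}{\Delta t}(s-\tfrac{\xi_h^{n+1}}{2})\|\nabla_h(e_\phi^{n+1}-e_\phi^n)\|_{TM}^2$. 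By Theorem \ref{fully-scheme-theorem1} we have $0<R_h^{n+1}\le R^0$ and $E_h(\phi_h^n)+C\ge K_1$, so $\xi_h^{n+1}\le R^0/K_1$; the hypothesis $s\ge R^0/(2K_1)$ therefore makes the increment term nonnegative and it may be dropped. Multiplying by $\Delta t$ and summing, the leading part of the gradient term telescopes after an Abel summation, and the non-telescoping defect $\tfrac{\epsilon^2}{2}\sum(\xi_h^{m+1}-\xi_h^m)\|\nabla_he_\phi^m\|_{TM}^2$ is absorbed because the energy law gives $\sum\Delta t\|d_t\phi_h^{n+1}\|_m^2\le MR^0$ and the monotonicity of $R_h^n$, which together bound the total variation $\sum_m|\xi_h^{m+1}-\xi_h^m|$ independently of $\Delta t$; this turns the defect into a summable Gronwall term. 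The reaction, scalar-deviation and truncation terms are split by Young's inequality, donating a fraction to $\tfrac1M\|d_te_\phi^{n+1}\|_m^2$ and leaving $\|e_\phi^n\|_m^2$, $|\xi_h^{n+1}-1|^2$ and $O(\Delta t^2+h^4)$, where a short estimate gives $|\xi_h^{n+1}-1|\le C(|e_R^{n+1}|+\|e_\phi^n\|_m+\Delta t+h^2)$.

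Then I would pair the $e_R$-equation with $e_R^{n+1}$, which telescopes into $\tfrac12(|e_R^{n+1}|^2-|e_R^n|^2)$ and produces a right-hand side controlled by $\|d_te_\phi^{n+1}\|_m|e_R^{n+1}|$ plus consistency; Young's inequality again sends $\|d_te_\phi^{n+1}\|_m^2$ into the already-accumulated dissipation and $|e_R^{n+1}|^2$ into Gronwall. Adding the two estimates and invoking $e_\phi^0=0$, $e_R^0=0$, a discrete Gronwall lemma yields $\sum\Delta t\|d_te_\phi^{n+1}\|_m^2+\|\nabla_he_\phi^{k+1}\|_{TM}^2+|e_R^{k+1}|^2\le C(h^4+\Delta t^2)$, and the $L^2$ bound follows for free from $e_\phi^{k+1}=\sum_{n}\Delta t\,d_te_\phi^{n+1}$ and Cauchy--Schwarz, giving $\|e_\phi^{k+1}\|_m^2\le T\sum\Delta t\|d_te_\phi^{n+1}\|_m^2$. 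All of this is conditional on the induction hypothesis that $\phi_h^j$ is uniformly bounded in the discrete maximum norm for $j\le n$, which is what allows $F''(\zeta^n)$ to be bounded and keeps $\xi_h^{n+1}$ near $1$; to close the induction I would feed the just-derived $H^1$ bound into a discrete inverse inequality so that $\|e_\phi^{n+1}\|_\infty$, hence $\|\phi_h^{n+1}\|_\infty\le\|\phi(t^{n+1})\|_\infty+\|e_\phi^{n+1}\|_\infty$, stays within the assumed bound once $h,\Delta t$ are small.

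The main obstacle is exactly the mismatch flagged in the introduction: the scalar $R_h^{n+1}$ is implicit while $\Delta_h\phi_h^n$ is explicit, so the gradient term carries the time-varying, solution-dependent coefficient $\xi_h^{n+1}$ rather than a constant. Handling it requires both the stabilization threshold $s\ge R^0/(2K_1)$, which keeps the increment term nonnegative for every admissible $\xi_h^{n+1}$, and the total-variation bound on $\xi_h^{n}$ extracted from the energy dissipation law, and it is interwoven with the $l^\infty$ induction needed to control the non-Lipschitz nonlinearity. Disentangling these couplings so that the discrete Gronwall lemma applies is the delicate core of the argument.
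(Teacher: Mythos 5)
Your overall architecture coincides with the paper's: the same three error equations, the same pairing of the $\phi$-equation with $d_te_\phi^{n+1}$ and of the $R$-equation with $e_R^{n+1}$, the same use of $s\ge R^0/(2K_1)$ to make the stabilizer increment nonnegative, the same two-part induction ($\|\phi_h^n\|_\infty\le C^*$ closed by an inverse inequality, plus an a priori $O((\Delta t+h^2)^{1/2})$ bound), and a discrete Gronwall at the end. Two points differ. First, you drop the paper's separate test with $e_\phi^{n+1}$ (Lemma \ref{err-lemma2}) and recover $\|e_\phi^{k+1}\|_m^2$ from $e_\phi^{k+1}=\sum_n\Delta t\,d_te_\phi^{n+1}$ and Cauchy--Schwarz; that is a legitimate simplification since $e_\phi^0=0$ and the theorem already controls $\sum\Delta t\|d_te_\phi^{n+1}\|_m^2$. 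Second, and more substantively, you treat the non-telescoping gradient term with coefficient $\xi_h^{n+1}=R_h^{n+1}/(E_h(\phi_h^n)+C)$ by Abel summation plus a total-variation bound on $\xi_h^n$, whereas the paper reweights: it multiplies the inequality of Lemma \ref{err-lemma1} by $E_h(\phi_h^n)+C$ so that the left side carries $R_h^{n+1}\|\nabla_he_\phi^{n+1}\|_{TM}^2$ and the right side $R_h^{n}\|\nabla_he_\phi^{n}\|_{TM}^2$ (using $R_h^{n+1}\le R_h^n$), which telescopes exactly upon summation; the unweighted bound is then recovered at the final step by lower-bounding $R_h^{k+1}$ through the discrete energy law, cf.\ \eqref{err-e28}--\eqref{err-e33}.

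The one step in your route that does not go through as stated is the total-variation bound on $\xi_h^n$. Monotonicity of $R_h^n$ controls $\sum_n|R_h^{n+1}-R_h^n|\le R^0$, i.e.\ the variation of the \emph{numerator}, but the \emph{denominator} $E_h(\phi_h^n)+C$ also changes with $n$, and its increments involve $\|\nabla_h(\phi_h^{n+1}-\phi_h^n)\|_{TM}$ through the quadratic part of the discrete energy; the dissipation estimate $\sum\Delta t\|d_t\phi_h^{n+1}\|_m^2\le MR^0$ that you invoke controls only the $\|\cdot\|_m$ increments. Summing $\|\nabla_h(\phi_h^{n+1}-\phi_h^n)\|_{TM}$ requires either an inverse inequality (costing $h^{-1}$) or feeding back the increment term $\sum\Delta t^{-1}\|\nabla_h(e_\phi^{n+1}-e_\phi^n)\|_{TM}^2$ that you have just discarded as nonnegative, which makes the argument circular unless you retain a strictly positive multiple of it. The paper's reweighting sidesteps this entirely because only the monotone quantity $R_h^n$ appears as the weight. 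Your argument can be repaired (keep part of the increment term when $s>R^0/(2K_1)$ strictly, or switch to the paper's weighting), but as written this link in the chain is a genuine gap.
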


We shall split the proof of the above results into three lemmas below.
\begin{lemma}\label{err-lemma1}
Under the conditions of Theorem \ref{err-estimate}, there exists positive constants $C$ and $K_1$ independent $h$ and $\Delta t$ such that
\begin{equation}\label{err-lemma1-e1}
\aligned
&\displaystyle\frac{K_1}{2}\|d_te_\phi^{n+1}\|^2_m +\left(s-\frac{R_h^{n+1}}{E_h(\phi_h^n)+C}\right)K_1\epsilon^2M\frac{\|\nabla_he_\phi^{n+1}-\nabla_he_\phi^n\|_{TM}^2}{\Delta t}+R_h^{n+1}\epsilon^2M\frac{\|\nabla_he_\phi^{n+1}\|_{TM}^2}{2\Delta t}\\
&\leq C|e_R^{n+1}|^2+C\|\nabla_he_\phi^n\|_{TM}^2+C\|e_\phi^{n}\|_m^2+R_h^{n}\epsilon^2M\frac{\|\nabla_he_\phi^n\|_{TM}^2}{2\Delta t}+C(h^4+\Delta t^2).
\endaligned
\end{equation} 
\end{lemma}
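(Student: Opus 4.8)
The plan is to turn \eqref{fully-scheme1} into an error equation, pair it against the scalar-weighted increment $(E_h(\phi_h^n)+C)\,d_te_\phi^{n+1}$, and read the three left-hand terms off a discrete energy identity. First I would set up a consistency form for the exact solution. Writing $A=-\Delta$ (so $A_h=-\Delta_h$) and using the reformulated flow \eqref{PSSAV-e2}, in which the ratio $R(t)/(E(\phi)+C)\equiv1$, I would Taylor-expand in time and invoke the $O(h^2)$ accuracy of $\Delta_h$ to show that $\phi(t^{n+1})$ obeys an identity of the same shape as \eqref{fully-scheme1} with a remainder $T^{n+1}$ satisfying $\|T^{n+1}\|_m\le C(\Delta t+h^2)$. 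The regularity hypotheses of Theorem \ref{err-estimate} are exactly what is needed to push the backward-Euler error, the stabilization increment $s\epsilon^2(\Delta_h\phi(t^{n+1})-\Delta_h\phi(t^n))=O(\Delta t)$, the $\Delta_h$-versus-$\Delta$ error, the time shift $F'(\phi(t^n))-F'(\phi(t^{n+1}))$, and the factor $R(t^{n+1})/(E(\phi(t^n))+C)-1=O(\Delta t)$ all into $T^{n+1}$.

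The algebraic heart of the argument is the recombination that resolves the ``implicit $R_h$ / explicit $\Delta_h\phi_h$'' difficulty flagged in the introduction. Setting $\lambda_h^{n+1}=R_h^{n+1}/(E_h(\phi_h^n)+C)$, I would rewrite the diffusion-plus-stabilization part of \eqref{fully-scheme1} as $s\epsilon^2(\Delta_h\phi_h^{n+1}-\Delta_h\phi_h^n)+\lambda_h^{n+1}\epsilon^2\Delta_h\phi_h^n=(s-\lambda_h^{n+1})\epsilon^2(\Delta_h\phi_h^{n+1}-\Delta_h\phi_h^n)+\lambda_h^{n+1}\epsilon^2\Delta_h\phi_h^{n+1}$, so the first piece is a pure stabilization increment while the second is a fully implicit diffusion acting on $\phi_h^{n+1}$. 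Subtracting the consistency form yields an error equation whose principal part is $M(s-\lambda_h^{n+1})\epsilon^2(\Delta_h e_\phi^{n+1}-\Delta_h e_\phi^n)+M\lambda_h^{n+1}\epsilon^2\Delta_h e_\phi^{n+1}$, together with a Lipschitz nonlinear difference $-M\lambda_h^{n+1}(F'(\phi_h^n)-F'(\phi(t^n)))$, a ratio-error term carrying the scalar $\lambda_h^{n+1}-\lambda^{n+1}$, and $-T^{n+1}$.

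I would then take the discrete inner product with $(E_h(\phi_h^n)+C)\,d_te_\phi^{n+1}$. The left side gives $(E_h(\phi_h^n)+C)\|d_te_\phi^{n+1}\|_m^2\ge K_1\|d_te_\phi^{n+1}\|_m^2$. Applying the discrete integration-by-parts \eqref{gradient-inner} to the stabilization piece produces a factor $(E_h(\phi_h^n)+C)(s-\lambda_h^{n+1})\ge K_1(s-\lambda_h^{n+1})$ multiplying $\|\nabla_h e_\phi^{n+1}-\nabla_h e_\phi^n\|_{TM}^2/\Delta t$; here the hypothesis $s\ge R^0/(2K_1)$ with $R_h^{n+1}\le R^0$ and $E_h(\phi_h^n)+C\ge K_1$ (from the dissipation and positivity of Theorem \ref{fully-scheme-theorem1}) forces $s-\lambda_h^{n+1}\ge0$, so this term stays on the left. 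For the implicit diffusion piece $(E_h(\phi_h^n)+C)\lambda_h^{n+1}=R_h^{n+1}$ exactly, and the identity $2(\nabla_h a,\nabla_h(a-b))_{TM}=\|\nabla_h a\|_{TM}^2-\|\nabla_h b\|_{TM}^2+\|\nabla_h(a-b)\|_{TM}^2$ delivers the telescoping energy $R_h^{n+1}\epsilon^2M\|\nabla_h e_\phi^{n+1}\|_{TM}^2/(2\Delta t)$ on the left, a term $-R_h^{n+1}\epsilon^2M\|\nabla_h e_\phi^n\|_{TM}^2/(2\Delta t)$ that I move to the right and majorize by its $R_h^n$ analogue using $R_h^{n+1}\le R_h^n$, and a nonnegative increment $\|\nabla_h(e_\phi^{n+1}-e_\phi^n)\|_{TM}^2$ that I discard.

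Finally I would bound the remaining contributions by Cauchy--Schwarz and Young's inequality, each absorbing a $\tfrac{K_1}{8}\|d_te_\phi^{n+1}\|_m^2$ into the left so that the coefficient $\tfrac{K_1}{2}$ survives: the truncation pairing gives $C(h^4+\Delta t^2)$; the nonlinear pairing gives $C\|e_\phi^n\|_m^2$; and the ratio term rests on the estimate $|\lambda_h^{n+1}-\lambda^{n+1}|\le C(|e_R^{n+1}|+\|e_\phi^n\|_m+\|\nabla_h e_\phi^n\|_{TM}+h^2)$, got by splitting $\lambda_h^{n+1}-\lambda^{n+1}$ into an $e_R^{n+1}/(E_h(\phi_h^n)+C)$ contribution and a discrete-versus-continuous energy difference controlled by $O(h^2)$ and by $e_\phi^n,\nabla_h e_\phi^n$; squaring reproduces precisely $C|e_R^{n+1}|^2+C\|\nabla_h e_\phi^n\|_{TM}^2+C\|e_\phi^n\|_m^2$. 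I expect the main obstacle to be the nonlinear pairing, since the bound $\|F'(\phi_h^n)-F'(\phi(t^n))\|_m\le C\|e_\phi^n\|_m$ requires an a priori $L^\infty$ bound on $\phi_h^n$ that is unavailable at this stage and must be supplied by the induction argument announced in the introduction; a secondary difficulty is ensuring $s-\lambda_h^{n+1}\ge0$ uniformly in $n$, which is exactly where the positivity $R_h^{n+1}>0$ and the dissipation $R_h^{n+1}\le R^0$ are indispensable.
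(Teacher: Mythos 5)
Your proposal is correct and follows essentially the same route as the paper: form the error equations, test against the time increment $d_te_\phi^{n+1}$ weighted by $E_h(\phi_h^n)+C\ge K_1$, use the stabilization term to control the increment generated by the explicitly treated $\Delta_h e_\phi^n$ (your pre-pairing recombination into $(s-\lambda_h^{n+1})$-stabilization plus implicit diffusion is algebraically identical to the paper's post-pairing bookkeeping), split the ratio error into an $e_R^{n+1}$ part and a discrete-versus-continuous energy part, and defer the $L^\infty$ bound on $\phi_h^n$ and the positivity/monotonicity of $R_h^{n+1}$ to the induction and dissipation results, exactly as the paper does. No gaps.
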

\begin{proof}
Subtracting equations in \eqref{fully-scheme1} from equations in \eqref{PSSAV-e2} respectively, we obtain the following three error equations:
\begin{equation}\label{err-e1}
\displaystyle\frac{e_\phi^{n+1}-e_\phi^n}{\Delta t}=-Me_\mu^{n+1}+\left.\frac{\partial \phi}{\partial t}\right|_{t^{n+1}}-\frac{\phi(t^{n+1})-\phi(t^n)}{\Delta t},
\end{equation}
\begin{equation}\label{err-e2}
\aligned
\displaystyle e_\mu^{n+1}=
&-s\epsilon^2(\Delta_he_\phi^{n+1}-\Delta_he_\phi^{n})+\frac{R_h^{n+1}}{E_h(\phi_h^n)+C}\left[-\epsilon^2\Delta_h\phi_h^n+F'(\phi_h^n)\right]\\
&-\frac{R(t^{n+1})}{E(\phi(t^{n+1}))+C}\left[-\epsilon^2\Delta\phi(t^{n+1})+F'(\phi(t^{n+1}))\right]\\
&-s\epsilon^2(\Delta_h\phi(t^{n+1})-\Delta\phi(t^{n+1}))+s\epsilon^2(\Delta_h\phi(t^{n})-\Delta\phi(t^{n+1})),
\endaligned
\end{equation}
and
\begin{equation}\label{err-e3}
\aligned
\displaystyle\frac{e_R^{n+1}-e_R^n}{\Delta t}
&=-\frac{1}{M}\left(\frac{\phi_h^{n+1}-\phi_h^n}{\Delta t},\frac{\phi_h^{n+1}-\phi_h^n}{\Delta t}\right)_m+\frac{1}{M}\left(\frac{\partial\phi(t^{n+1})}{\partial t},\frac{\partial\phi(t^{n+1})}{\partial t}\right)_m\\
&=-\frac{1}{M}\left(\frac{\phi_h^{n+1}-\phi_h^n}{\Delta t}+\frac{\partial\phi(t^{n+1})}{\partial t},\frac{\phi_h^{n+1}-\phi_h^n}{\Delta t}-\frac{\partial\phi(t^{n+1})}{\partial t}\right)_m.
\endaligned
\end{equation}

Next we shall first make the hypotheses that there exist two positive constant $C^*$ and $C_*$ such that 
 \begin{subequations}
\begin{align}
&\|\phi_h^n\|_\infty\leq C^*,\label{err-e4a}\\
&\|e_\phi^{n}\|_m+\|\nabla_h e_\phi^{n}\|_{TM}+|e_R^n|\leq C_*(\Delta t+h^2)^{\frac12}.\label{err-e4b}
\end{align} 
\end{subequations} 
These two hypotheses will be verified in Lemma \ref{err-lemma4}. 
 
Multiplying \eqref{err-e1} by $\frac{e_\phi^{n+1}-e_\phi^n}{\Delta t}h^2$ and making summation on $i,j$ for $1\leq i\leq N_{xy}$, $1\leq j\leq N_{xy}$, we have 
\begin{equation}\label{err-e5}
\aligned
\displaystyle\|d_te_\phi^{n+1}\|_m^2=-M(e_\mu^{n+1},d_te_\phi^{n+1})_m+\left(\left.\frac{\partial \phi}{\partial t}\right|_{t^{n+1}}-\frac{\phi(t^{n+1})-\phi(t^n)}{\Delta t},d_te_\phi^{n+1}\right)_m.
\endaligned
\end{equation}

Multiplying \eqref{err-e2} by $d_te_\phi^{n+1}h^2$ and making summation on $i,j$ for $1\leq i\leq N_{xy}$, $1\leq j\leq N_{xy}$, we have 
\begin{equation}\label{err-e6}
\aligned
\displaystyle\left(e_\mu^{n+1},d_te_\phi^{n+1}\right)_m=
&\displaystyle-s\epsilon^2\left(\Delta_he_\phi^{n+1}-\Delta_he_\phi^{n},d_te_\phi^{n+1}\right)_m\\
&\displaystyle-\epsilon^2\left(\frac{R_h^{n+1}}{E_h(\phi_h^n)+C}\Delta_h\phi_h^n-\frac{R(t^{n+1})}{E(\phi(t^{n+1}))+C}\Delta\phi(t^{n+1}),d_te_\phi^{n+1}\right)_m\\
&\displaystyle+\left(\frac{R_h^{n+1}}{E_h(\phi_h^n)+C}F'(\phi_h^n)-\frac{R(t^{n+1})}{E(\phi(t^{n+1}))+C}F'(\phi(t^{n+1})),d_te_\phi^{n+1}\right)_m\\
&\displaystyle-s\epsilon^2\left(\Delta_h\phi(t^{n+1})-\Delta\phi(t^{n+1}),d_te_\phi^{n+1}\right)_m\\
&\displaystyle+s\epsilon^2\left(\Delta_h\phi(t^{n})-\Delta\phi(t^{n+1}),d_te_\phi^{n+1}\right)_m.
\endaligned
\end{equation}

For the first term in the right-hand side of the equation \eqref{err-e6}, we have
\begin{equation}\label{err-e7}
\aligned
\displaystyle-s\epsilon^2\left(\Delta_he_\phi^{n+1}-\Delta_he_\phi^{n},d_te_\phi^{n+1}\right)_m=s\epsilon^2\frac{\|\nabla_he_\phi^{n+1}-\nabla_he_\phi^{n}\|^2_{TM}}{\Delta t},
\endaligned
\end{equation}
where $\nabla_hf=d_xf+d_yf$.

For the second term in the right-hand side of the equation \eqref{err-e6}, we have
\begin{equation}\label{err-e8}
\aligned
&\displaystyle\quad-\epsilon^2\left(\frac{R_h^{n+1}}{E_h(\phi_h^n)+C}\Delta_h\phi_h^n-\frac{R(t^{n+1})}{E(\phi(t^{n+1}))+C}\Delta\phi(t^{n+1}),d_te_\phi^{n+1}\right)_m\\
&\displaystyle=-\epsilon^2\left(\frac{R_h^{n+1}}{E_h(\phi_h^n)+C}\Delta_he_\phi^n,d_te_\phi^{n+1}\right)_m-\epsilon^2\frac{R_h^{n+1}}{E_h(\phi_h^n)+C}\left(\Delta_h\phi(t^n)-\Delta\phi(t^{n+1}),d_te_\phi^{n+1}\right)_m\\
&\displaystyle\quad-\epsilon^2\left(\frac{R_h^{n+1}}{E_h(\phi_h^n)+C}-\frac{R(t^{n+1})}{E(\phi(t^{n+1}))+C}\right)\left(\Delta\phi(t^{n+1}),d_te_\phi^{n+1}\right)_m.
\endaligned
\end{equation}

For the first term in the right-hand side of \eqref{err-e8}, we have
\begin{equation}\label{err-e9}
\aligned
&\displaystyle-\epsilon^2\left(\frac{R_h^{n+1}}{E_h(\phi_h^n)+C}\Delta_he_\phi^n,d_te_\phi^{n+1}\right)_m\\
&\displaystyle=\frac{R_h^{n+1}}{E_h(\phi_h^n)+C}\epsilon^2\left(\nabla_he_\phi^n,\frac{\nabla_he_\phi^{n+1}-\nabla_he_\phi^n}{\Delta t}\right)_{TM}\\
&\displaystyle=-\frac{R_h^{n+1}}{E_h(\phi_h^n)+C}\epsilon^2\left(\frac{\|\nabla_he_\phi^n\|_{TM}^2-\|\nabla_he_\phi^{n+1}\|_{TM}^2}{2\Delta t}+\frac{\|\nabla_he_\phi^n-\nabla_he_\phi^{n+1}\|_{TM}^2}{2\Delta t}\right)\\
&\displaystyle=\frac{R_h^{n+1}}{E_h(\phi_h^n)+C}\epsilon^2\left(\frac{\|\nabla_he_\phi^{n+1}\|_{TM}^2-\|\nabla_he_\phi^{n}\|_{TM}^2}{2\Delta t}-\frac{\|\nabla_he_\phi^n-\nabla_he_\phi^{n+1}\|_{TM}^2}{2\Delta t}\right).
\endaligned
\end{equation}

Noting that $R_h^{n+1}\leq R^0\leq C_1$ and $E_h(\phi_h^n)+C>K_1>0$, then for the second term in the right-hand side of \eqref{err-e8}, by using Cauchy-Schwartz inequality, we have
\begin{equation}\label{err-e10}
\aligned
\displaystyle-\epsilon^2\frac{R_h^{n+1}}{E_h(\phi_h^n)+C}\left(\Delta_h\phi(t^n)-\Delta\phi(t^{n+1}),d_te_\phi^{n+1}\right)_m
&\displaystyle\leq\frac{1}{10M}\|d_te_\phi^{n+1}\|_m^2+C\|\Delta_h\phi(t^n)-\Delta\phi(t^{n+1})\|_m^2,\\
&\displaystyle\leq\frac{1}{10M}\|d_te_\phi^{n+1}\|_m^2+C\|\phi\|^2_{L^\infty(0,T;W^{4,\infty}(\Omega))}h^4.
\endaligned
\end{equation}

Using equation \eqref{err-e4a} and supposing $F(\phi)\in C^2(\mathbb{R})$, then we have the following inequality for the last term in the right-hand side of \eqref{err-e8}:
\begin{equation}\label{err-e11}
\aligned
&\displaystyle-\epsilon^2\left(\frac{R_h^{n+1}}{E_h(\phi_h^n)+C}-\frac{R(t^{n+1})}{E(\phi(t^{n+1}))+C}\right)\left(\Delta\phi(t^{n+1}),d_te_\phi^{n+1}\right)_m\\
&\displaystyle-\epsilon^2\frac{e_R^{n+1}}{E_h(\phi_h^n)+C}\left(\Delta\phi(t^{n+1}),d_te_\phi^{n+1}\right)_m
+\epsilon^2\frac{R(t^{n+1})\left(E(\phi(t^{n+1}))-E_h(\phi_h^n)\right)}{\left[E_h(\phi_h^n)+C\right]\left[E(\phi(t^{n+1})+C\right]}\left(\Delta\phi(t^{n+1}),d_te_\phi^{n+1}\right)_m\\
&\displaystyle\leq\frac{1}{10M}\|d_te_\phi^{n+1}\|^2+C|e_R^{n+1}|^2+C\|\nabla_he_\phi^n\|_{TM}^2+C\|e_\phi^n\|_m^2+C(h^4+\Delta t^2).
\endaligned
\end{equation}

Using similar technique and Cauchy-Schwartz inequality, we can obtain the following inequality for the third term in the right-hand side of \eqref{err-e6}:
\begin{equation}\label{err-e12}
\aligned
&\displaystyle\left(\frac{R_h^{n+1}}{E_h(\phi_h^n)+C}{F'}(\phi_h^n)-\frac{R(t^{n+1})}{E(\phi(t^{n+1}))+C}{F'}(\phi(t^{n+1})),d_te_\phi^{n+1}\right)_m\\
&\displaystyle=\frac{R_h^{n+1}}{E_h(\phi_h^n)+C}\left({F'}(\phi_h^n)-{F'}(\phi(t^{n+1})),d_te_\phi^{n+1}\right)_m\\
&\displaystyle\quad+\left(\frac{R_h^{n+1}}{E_h(\phi_h^n)+C}-\frac{R(t^{n+1})}{E_h(\phi(t^{n+1}))+C}\right)\left({F'}(\phi(t^{n+1})),d_te_\phi^{n+1}\right)_m\\
&\displaystyle\leq \frac{1}{10M}\|d_te_\phi^{n+1}\|_m^2+C\|e_\phi^{n}\|_m^2+C(\Delta t)^2\\
&\displaystyle\quad+\frac{1}{10M}\|d_te_\phi^{n+1}\|_m^2+C|e_R^{n+1}|^2+C\|\nabla_he_\phi^n\|_{TM}^2+C\|e_\phi^n\|_m^2+C(h^4+\Delta t^2)\\
&\displaystyle\leq\frac{1}{5M}\|d_te_\phi^{n+1}\|_m^2+C\|e_\phi^{n}\|_m^2+C|e_R^{n+1}|^2+C\|\nabla_he_\phi^n\|_{TM}^2+C(h^4+\Delta t^2).
\endaligned
\end{equation}

For the last two terms in the right-hand side of \eqref{err-e6}, using Cauchy-Schwartz inequality, we have
\begin{equation}\label{err-e13}
\aligned
&\displaystyle-s\epsilon^2\left(\Delta_h\phi(t^{n+1})-\Delta\phi(t^{n+1}),d_te_\phi^{n+1}\right)_m+s\epsilon^2\left(\Delta_h\phi(t^{n})-\Delta\phi(t^{n+1}),d_te_\phi^{n+1}\right)_m\\
&\displaystyle\leq\frac{1}{10M}\|d_te_\phi^{n+1}\|_m^2+C(h^4+\Delta t^2)\left(\|\phi\|^2_{L^\infty(0,T;W^{4,\infty}(\Omega))} + \|\phi\|^2_{W^{1,\infty}(0,T; W^{2,2}(\Omega))}\right).
\endaligned
\end{equation}

Using \eqref{err-e4a}, we obtain there are two positive constants $K_1$, $K_2$ to satisfy $0<K_1<E_h(\phi_h^n)+C<2(E(\phi(t^n))+C)<K_2$. We choose $s\geq \frac{R^0}{2K_1}$ to satisfy that $s-\frac{R_h^{n+1}}{E_h(\phi_h^n)+C}>0$. Multiplying both sides of the inequality \eqref{err-e14} by $E_h(\phi_h^n)+C$ yields
\begin{equation*}
\aligned
&\displaystyle\frac{K_1}{2}\|d_te_\phi^{n+1}\|_m^2+\left(s-\frac{R_h^{n+1}}{E_h(\phi_h^n)+C}\right)K_1\epsilon^2M\frac{\|\nabla_he_\phi^{n+1}-\nabla_he_\phi^n\|_{TM}^2}{\Delta t}+R_h^{n+1}\epsilon^2M\frac{\|\nabla_he_\phi^{n+1}\|_{TM}^2}{2\Delta t}\\
&\leq C|e_R^{n+1}|^2+C\|\nabla_he_\phi^n\|_{TM}^2+C\|e_\phi^{n}\|_m^2+R_h^{n}\epsilon^2M\frac{\|\nabla_he_\phi^n\|_{TM}^2}{2\Delta t}+C(h^4+\Delta t^2).
\endaligned
\end{equation*} 
\end{proof}

\begin{lemma}\label{err-lemma2}
Under the conditions of Theorem \ref{err-estimate}, there exists a positive constant $C$ independent $h$ and $\Delta t$ such that
\begin{equation}\label{err-e18}
\aligned
&\displaystyle\frac{\|e_\phi^{n+1}\|_m^2-\|e_\phi^{n}\|_m^2}{2\Delta t}+\frac{\|e_\phi^{n+1}-e_\phi^{n}\|_m^2}{2\Delta t}+MS\epsilon^2\left(\frac{\|\nabla_he_\phi^{n+1}\|_{TM}^2-\|\nabla_he_\phi^{n}\|_{TM}^2}{2}+\frac{\|\nabla_he_\phi^{n+1}-\nabla_he_\phi^{n}\|_{TM}^2}{2}\right)\\
&\leq C\|e_\phi^{n+1}\|_m^2+C\|\nabla_he_\phi^{n+1}\|_{TM}^2+C|e_R^{n+1}|^2+C(h^4+\Delta t^2).
\endaligned
\end{equation} 
\end{lemma}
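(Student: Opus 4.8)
The plan is to mirror the derivation of Lemma~\ref{err-lemma1}, but to test the first error equation against $e_\phi^{n+1}$ rather than against $d_te_\phi^{n+1}$, since the target now carries the $L^2$-norm $\|e_\phi^{n+1}\|_m^2$ and the full (unweighted) stabilized gradient energy on its left-hand side. Concretely, I would multiply \eqref{err-e1} by $e_\phi^{n+1}h^2$ and sum over $1\le i,j\le N_{xy}$. On the left-hand side the elementary identity $(a-b,a)_m=\tfrac12\bigl(\|a\|_m^2-\|b\|_m^2+\|a-b\|_m^2\bigr)$ applied to $\bigl(\tfrac{e_\phi^{n+1}-e_\phi^n}{\Delta t},e_\phi^{n+1}\bigr)_m$ produces exactly the first two terms $\frac{\|e_\phi^{n+1}\|_m^2-\|e_\phi^{n}\|_m^2}{2\Delta t}+\frac{\|e_\phi^{n+1}-e_\phi^{n}\|_m^2}{2\Delta t}$ of the statement.

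Next I would substitute $e_\mu^{n+1}$ from \eqref{err-e2} into the term $-M(e_\mu^{n+1},e_\phi^{n+1})_m$. The stabilization contribution $-M\bigl(-s\epsilon^2(\Delta_he_\phi^{n+1}-\Delta_he_\phi^{n}),e_\phi^{n+1}\bigr)_m$ is the one that generates the remaining left-hand side: by the summation-by-parts identity \eqref{gradient-inner} it equals $-Ms\epsilon^2(\nabla_he_\phi^{n+1}-\nabla_he_\phi^n,\nabla_he_\phi^{n+1})_{TM}$, and the same algebraic identity (with $a=\nabla_he_\phi^{n+1}$, $b=\nabla_he_\phi^n$) turns this into $-\tfrac{Ms\epsilon^2}{2}\bigl(\|\nabla_he_\phi^{n+1}\|_{TM}^2-\|\nabla_he_\phi^{n}\|_{TM}^2+\|\nabla_he_\phi^{n+1}-\nabla_he_\phi^{n}\|_{TM}^2\bigr)$; moving it to the left reproduces the third term of the statement with the correct sign.

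It then remains to bound the other pieces of $-M(e_\mu^{n+1},e_\phi^{n+1})_m$ — namely the energy-ratio/nonlinear coupling $\frac{R_h^{n+1}}{E_h(\phi_h^n)+C}\bigl[-\epsilon^2\Delta_h\phi_h^n+F'(\phi_h^n)\bigr]-\frac{R(t^{n+1})}{E(\phi(t^{n+1}))+C}\bigl[-\epsilon^2\Delta\phi(t^{n+1})+F'(\phi(t^{n+1}))\bigr]$ and the two spatial-consistency terms — together with the temporal truncation residual $\bigl(\partial_t\phi|_{t^{n+1}}-\tfrac{\phi(t^{n+1})-\phi(t^n)}{\Delta t},e_\phi^{n+1}\bigr)_m$ coming from \eqref{err-e1}. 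I would split the coupling term exactly as in \eqref{err-e8}--\eqref{err-e12}, isolating the factor $e_R^{n+1}$, the energy difference $E(\phi(t^{n+1}))-E_h(\phi_h^n)$, and the potential difference $F'(\phi_h^n)-F'(\phi(t^{n+1}))$; then Cauchy--Schwarz, Young's inequality, the uniform bound \eqref{err-e4a} (so that $F\in C^2$ yields a Lipschitz $F'$ on the relevant range and $0<K_1<E_h(\phi_h^n)+C$), and the regularity $\phi\in L^\infty(0,T;W^{4,\infty})\cap W^{2,\infty}(0,T;L^2)$ convert all spatial and temporal consistency residuals into $C(h^4+\Delta t^2)$. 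Because the test function is now $e_\phi^{n+1}$ rather than $d_te_\phi^{n+1}$, no $\tfrac1M\|d_te_\phi^{n+1}\|_m^2$ has to be absorbed, so these estimates are in fact lighter than those of Lemma~\ref{err-lemma1}.

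The main obstacle is that this bookkeeping naturally produces the level-$n$ quantities $\|e_\phi^{n}\|_m^2$ and $\|\nabla_he_\phi^{n}\|_{TM}^2$ (through $\phi_h^n$, $\Delta_h\phi_h^n$ and $F'(\phi_h^n)$), whereas the statement admits only level-$(n+1)$ norms on the right. To remove them I would use $\|e_\phi^{n}\|_m^2\le 2\|e_\phi^{n+1}\|_m^2+2\|e_\phi^{n+1}-e_\phi^{n}\|_m^2$ and hide the increment in the $\frac{1}{2\Delta t}\|e_\phi^{n+1}-e_\phi^{n}\|_m^2$ term on the left (licit for $\Delta t$ small, since that term carries a $1/\Delta t$ weight), and likewise $\|\nabla_he_\phi^{n}\|_{TM}^2\le 2\|\nabla_he_\phi^{n+1}\|_{TM}^2+2\|\nabla_he_\phi^{n+1}-\nabla_he_\phi^{n}\|_{TM}^2$. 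The crucial point is that the gradient increment $\|\nabla_he_\phi^{n+1}-\nabla_he_\phi^{n}\|_{TM}^2$ can be absorbed into the $\tfrac{Ms\epsilon^2}{2}\|\nabla_he_\phi^{n+1}-\nabla_he_\phi^{n}\|_{TM}^2$ term on the left precisely because the stabilization condition $s\ge \frac{R^0}{2K_1}$ of Theorem~\ref{err-estimate} forces the coefficient $\frac{R_h^{n+1}}{E_h(\phi_h^n)+C}$ (recall $R_h^{n+1}\le R^0$ by Theorem~\ref{fully-scheme-theorem1} and $E_h(\phi_h^n)+C>K_1$) to satisfy $\frac{R_h^{n+1}}{E_h(\phi_h^n)+C}\le\frac{R^0}{K_1}\le 2s$. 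This single absorption is the one place where the hypothesis on $s$ is indispensable, and it is the technical crux of the lemma.
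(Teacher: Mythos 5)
Your core derivation is exactly the paper's: the authors likewise multiply \eqref{err-e1} by $e_\phi^{n+1}h^2$, sum over the grid and combine with \eqref{err-e2} (their \eqref{err-e15}), obtain the whole left-hand side of \eqref{err-e18} from the identity $(a-b,a)_m=\tfrac12\left(\|a\|_m^2-\|b\|_m^2+\|a-b\|_m^2\right)$ together with the summation-by-parts formula \eqref{gradient-inner} (their \eqref{err-e16}), and then bound the remaining consistency and coupling terms by the same Cauchy--Schwarz/regularity/induction-hypothesis estimates already used for \eqref{err-e6} (their \eqref{err-e17}). Up to that point your proposal is a faithful reconstruction.

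The one place you depart from the paper is your final paragraph, and that step is both unnecessary and not quite right. First, the role you assign to the hypothesis $s\geq \frac{R^0}{2K_1}$ is misplaced: when the term $-\epsilon^2\frac{R_h^{n+1}}{E_h(\phi_h^n)+C}\Delta_he_\phi^{n}$ is tested against $e_\phi^{n+1}$ it yields a quantity of the form $\frac{R_h^{n+1}}{E_h(\phi_h^n)+C}\,\epsilon^2M\,(\nabla_he_\phi^{n},\nabla_he_\phi^{n+1})_{TM}$, which is controlled directly by $C\left(\|\nabla_he_\phi^{n}\|_{TM}^2+\|\nabla_he_\phi^{n+1}\|_{TM}^2\right)$ using only $R_h^{n+1}\leq R^0$ and $E_h(\phi_h^n)+C>K_1$; no interaction with the stabilization term occurs, and the condition on $s$ is genuinely needed only in Lemma \ref{err-lemma1}, where it makes $s-\frac{R_h^{n+1}}{E_h(\phi_h^n)+C}$ positive. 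Second, your proposed absorption of $2C\|\nabla_he_\phi^{n+1}-\nabla_he_\phi^{n}\|_{TM}^2$ into $\frac{Ms\epsilon^2}{2}\|\nabla_he_\phi^{n+1}-\nabla_he_\phi^{n}\|_{TM}^2$ requires $s\gtrsim C/(M\epsilon^2)$ for a generic constant $C$ coming from Cauchy--Schwarz; the inequality $\frac{R_h^{n+1}}{E_h(\phi_h^n)+C}\leq 2s$ does not deliver this, since that ratio never multiplies the gradient increment in this computation. The correct and simpler resolution is to leave $C\|e_\phi^{n}\|_m^2$ and $C\|\nabla_he_\phi^{n}\|_{TM}^2$ on the right-hand side: the summed inequality \eqref{err-e25} and the discrete Gronwall argument treat level-$n$ and level-$(n{+}1)$ terms identically, so nothing is gained (and a spurious condition on $s$ or $\Delta t$ is avoided) by shifting the index before summation.
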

\begin{proof}
Combining \eqref{err-e5}$\sim$\eqref{err-e6} with above inequalities \eqref{err-e7}$\sim$\eqref{err-e13}, we can obtain that
\begin{equation}\label{err-e14}
\aligned
&\displaystyle\frac{1}{2}\|d_te_\phi^{n+1}\|_m^2+\left(s-\frac{R_h^{n+1}}{E_h(\phi_h^n)+C}\right)\epsilon^2M\frac{\|\nabla_he_\phi^{n+1}-\nabla_he_\phi^n\|_{TM}^2}{\Delta t}+\frac{R_h^{n+1}}{E_h(\phi_h^n)+C}\epsilon^2M\frac{\|\nabla_he_\phi^{n+1}\|_{TM}^2}{2\Delta t}\\
&\leq C|e_R^{n+1}|^2+C\|\nabla_he_\phi^n\|_{TM}^2+C\|e_\phi^{n}\|_m^2+\frac{R_h^{n}}{E_h(\phi_h^n)+C}\epsilon^2M\frac{\|\nabla_he_\phi^n\|_{TM}^2}{2\Delta t}+C(h^4+\Delta t^2).
\endaligned
\end{equation} 

Next we multiply \eqref{err-e1} by $e_\phi^{n+1}h^2$, make summation on $i,j$ for $1\leq i\leq N_{xy}$, $1\leq j\leq N_{xy}$, and combine it with \eqref{err-e2} to obtain
\begin{equation}\label{err-e15}
\aligned
&\displaystyle\quad\left(\frac{e_\phi^{n+1}-e_\phi^n}{\Delta t},e_\phi^{n+1}\right)_m-Ms\epsilon^2\left(\Delta_he_\phi^{n+1}-\Delta_he_\phi^{n},e_\phi^{n+1}\right)_m\\
&\displaystyle=-\left(\frac{MR_h^{n+1}}{E_h(\phi_h^n)+C}\left[-\epsilon^2\Delta_h\phi_h^n+{F'}(\phi_h^n)\right]
-\frac{MR(t^{n+1})}{E_h(\phi(t^{n+1}))+C}\left[-\epsilon^2\Delta\phi(t^{n+1})+{F'}(\phi(t^{n+1}))\right],e_\phi^{n+1}\right)_m\\
&\displaystyle\quad+s\epsilon^2M\left(\Delta_h\phi(t^{n+1})-\Delta\phi(t^{n+1}),e_\phi^{n+1}\right)_m
-s\epsilon^2M\left(\Delta_h\phi(t^{n})-\Delta\phi(t^{n+1}),e_\phi^{n+1}\right)_m\\
&\displaystyle\quad+\left(\left.\frac{\partial\phi}{\partial t}\right|_{t^{n+1}}-\frac{\phi(t^{n+1})-\phi(t^{n})}{\Delta t},e_\phi^{n+1}\right)_m=RHD.
\endaligned
\end{equation}

For all terms on the left-hand side of \eqref{err-e15}, we have
\begin{equation}\label{err-e16}
\aligned
&\displaystyle\quad\left(\frac{e_\phi^{n+1}-e_\phi^n}{\Delta t},e_\phi^{n+1}\right)_m-Ms\epsilon^2\left(\Delta_he_\phi^{n+1}-\Delta_he_\phi^{n},e_\phi^{n+1}\right)_m\\
&\displaystyle=\frac{\|e_\phi^{n+1}\|_m^2-\|e_\phi^{n}\|_m^2}{2\Delta t}+\frac{\|e_\phi^{n+1}-e_\phi^{n}\|_m^2}{2\Delta t}+MS\epsilon^2\left(\frac{\|\nabla_he_\phi^{n+1}\|_{TM}^2-\|\nabla_he_\phi^{n}\|_{TM}^2}{2}+\frac{\|\nabla_he_\phi^{n+1}-\nabla_he_\phi^{n}\|_{TM}^2}{2}\right).
\endaligned
\end{equation}

Using similar technique for the right-hand side of \eqref{err-e6}, we can obtain the following inequality for the right-hand side of \eqref{err-e15}:
\begin{equation}\label{err-e17}
\aligned
RHD\leq C\|e_\phi^{n+1}\|_m^2+C\|\nabla_he_\phi^{n+1}\|_{TM}^2+C|e_R^{n+1}|^2+C(h^4+\Delta t^2).
\endaligned
\end{equation}

Combining \eqref{err-e15} with above inequalities \eqref{err-e16}$\sim$\eqref{err-e17}, we can obtain that
\begin{equation*}
\aligned
&\displaystyle\frac{\|e_\phi^{n+1}\|_m^2-\|e_\phi^{n}\|_m^2}{2\Delta t}+\frac{\|e_\phi^{n+1}-e_\phi^{n}\|_m^2}{2\Delta t}+MS\epsilon^2\left(\frac{\|\nabla_he_\phi^{n+1}\|_{TM}^2-\|\nabla_he_\phi^{n}\|_{TM}^2}{2}+\frac{\|\nabla_he_\phi^{n+1}-\nabla_he_\phi^{n}\|_{TM}^2}{2}\right)\\
&\leq C\|e_\phi^{n+1}\|_m^2+C\|\nabla_he_\phi^{n+1}\|_{TM}^2+C|e_R^{n+1}|^2+C(h^4+\Delta t^2).
\endaligned
\end{equation*} 
\end{proof}

We next give the estimate analysis for $|e_R^{n+1}|$.
\begin{lemma}\label{err-lemma3}
Under the conditions of Theorem \ref{err-estimate}, there exists a positive constant $C$ independent $h$ and $\Delta t$ such that
\begin{equation}\label{err-e23}
\aligned
\displaystyle\frac{|e_R^{n+1}|^2-|e_R^{n}|^2}{2\Delta t}+\frac{|e_R^{n+1}-e_R^{n}|^2}{2\Delta t}
\leq \frac14\|d_t e_\phi^{n+1}\|^2_m +C\|d_t e_\phi^{n+1}\|^2_m |e_R^{n+1}|^2+C|e_R^{n+1}|^2.
\endaligned
\end{equation} 
\end{lemma}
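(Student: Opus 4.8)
The plan is to test the scalar error equation \eqref{err-e3} against $e_R^{n+1}$ itself. Since $R$ is a scalar, multiplying \eqref{err-e3} by $e_R^{n+1}$ and using the elementary identity $(e_R^{n+1}-e_R^n)e_R^{n+1}=\tfrac12(|e_R^{n+1}|^2-|e_R^n|^2)+\tfrac12|e_R^{n+1}-e_R^n|^2$ immediately reproduces the entire left-hand side of \eqref{err-e23}. All the work therefore goes into the right-hand side, namely the scalar quantity $-\tfrac{1}{M}\bigl(w+v,\,w-v\bigr)_m\,e_R^{n+1}$, where I abbreviate $w=\tfrac{\phi_h^{n+1}-\phi_h^n}{\Delta t}$ and $v=\tfrac{\partial\phi(t^{n+1})}{\partial t}$.

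First I would exploit the difference-of-squares structure $(w+v,w-v)_m=\|w\|_m^2-\|v\|_m^2$, or equivalently decompose $w-v=d_te_\phi^{n+1}+\tau^{n+1}$, where $\tau^{n+1}=\tfrac{\phi(t^{n+1})-\phi(t^n)}{\Delta t}-v$ is the first-order time-consistency error. Under the regularity assumption $\phi\in W^{2,\infty}(0,T;L^2(\Omega))$ a Taylor expansion gives $\|\tau^{n+1}\|_m\le C\Delta t$, while $\phi\in W^{1,\infty}(0,T;\cdot)$ gives $\|v\|_m\le C$. A Cauchy--Schwarz estimate then yields $|(w+v,w-v)_m|\le\|w-v\|_m\,\|w+v\|_m\le(\|d_te_\phi^{n+1}\|_m+C\Delta t)(\|d_te_\phi^{n+1}\|_m+C)$, so after multiplying by $\tfrac1M|e_R^{n+1}|$ the right-hand side is controlled by the three products $\|d_te_\phi^{n+1}\|_m^2|e_R^{n+1}|$, $\|d_te_\phi^{n+1}\|_m|e_R^{n+1}|$, and lower-order terms of size $O(\Delta t)|e_R^{n+1}|$.

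The decisive step is the way each product is split by Young's inequality. The quadratic term must be written as $\|d_te_\phi^{n+1}\|_m^2|e_R^{n+1}|=\|d_te_\phi^{n+1}\|_m\cdot\bigl(\|d_te_\phi^{n+1}\|_m|e_R^{n+1}|\bigr)$, and then Young's inequality gives $\tfrac14\|d_te_\phi^{n+1}\|_m^2+C\|d_te_\phi^{n+1}\|_m^2|e_R^{n+1}|^2$; this is exactly what produces the nonlinear term on the right of \eqref{err-e23}. The remaining products are handled by the standard split $\|d_te_\phi^{n+1}\|_m|e_R^{n+1}|\le\delta\|d_te_\phi^{n+1}\|_m^2+C_\delta|e_R^{n+1}|^2$, the leftover $O(\Delta t)|e_R^{n+1}|$ contribution being of order $\Delta t^2$ (there is no $h^4$ piece here, since $R$ carries no spatial operator) and thus harmless relative to the $C(h^4+\Delta t^2)$ budget of Theorem \ref{err-estimate}.

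The main obstacle is not the manipulation itself but the presence of the genuinely nonlinear term $C\|d_te_\phi^{n+1}\|_m^2|e_R^{n+1}|^2$, which cannot be absorbed at this stage and reflects precisely the implicit treatment of $R$ coupled with the explicit treatment of $\Delta_h\phi$. It is retained deliberately: once the induction/a priori bounds of Lemma \ref{err-lemma4}, in particular \eqref{err-e4b}, guarantee that $|e_R^{n+1}|^2=O(\Delta t+h^2)$ is small, its coefficient becomes small and the term can be absorbed into the $\tfrac{K_1}{2}\|d_te_\phi^{n+1}\|_m^2$ dissipation supplied by Lemma \ref{err-lemma1}, so that combining the three lemmas and applying a discrete Gronwall argument closes the estimate.
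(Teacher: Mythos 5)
Your proposal is correct and follows essentially the same route as the paper: multiply the scalar error equation \eqref{err-e3} by $e_R^{n+1}$, use the identity $(a-b)a=\tfrac12(a^2-b^2)+\tfrac12(a-b)^2$ for the left-hand side, and split the right-hand side into the quadratic piece $-\tfrac{e_R^{n+1}}{M}\|d_te_\phi^{n+1}\|_m^2$ (handled by the asymmetric Young split $\tfrac18\|d_te_\phi^{n+1}\|_m^2+C\|d_te_\phi^{n+1}\|_m^2|e_R^{n+1}|^2$, exactly \eqref{err-e20}) and bounded cross terms (exactly \eqref{err-e21}), with the nonlinear term deliberately deferred to the absorption step via \eqref{err-e4b}. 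Your treatment is in fact slightly more careful than \eqref{err-e19}, which silently drops the consistency error $\tau^{n+1}=\tfrac{\phi(t^{n+1})-\phi(t^n)}{\Delta t}-\partial_t\phi(t^{n+1})$ from the second slot of the inner product; retaining it, as you do, leaves a harmless residual of order $\Delta t^2$ that strictly should appear on the right of \eqref{err-e23} but is absorbed anyway when the lemmas are summed in the proof of Theorem \ref{err-estimate}.
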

\begin{proof}
Multiplying \eqref{err-e3} with $e_R^{n+1}$ results in
\begin{equation}\label{err-e19}
\aligned
\displaystyle\frac{|e_R^{n+1}|^2-|e_R^{n}|^2}{2\Delta t}+\frac{|e_R^{n+1}-e_R^{n}|^2}{2\Delta t}
&\displaystyle=-\frac{e_R^{n+1}}{M}\left(\frac{e_\phi^{n+1}-e_\phi^n}{\Delta t}+\frac{\phi(t^{n+1})-\phi(t^n)}{\Delta t}+\frac{\partial \phi(t^{n+1})}{\partial t},\frac{e_\phi^{n+1}-e_\phi^n}{\Delta t}\right)_m\\
&\displaystyle=-\frac{e_R^{n+1}}{M}\|d_te_\phi^{n+1}\|^2_m -\frac{e_R^{n+1}}{M}\left(\frac{\phi(t^{n+1})-\phi(t^n)}{\Delta t}+\frac{\partial \phi(t^{n+1})}{\partial t},d_te_\phi^{n+1}\right)_m .
\endaligned
\end{equation} 

For the first term in the right-hand side of above equation \eqref{err-e19}, using Cauchy-Schwartz inequality, we have
\begin{equation}\label{err-e20}
\aligned
\displaystyle-\frac{e_R^{n+1}}{M}\|d_te_\phi^{n+1}\|_m^2\leq\frac18\|d_t e_\phi^{n+1}\|^2_m +C\|d_t e_\phi^{n+1}\|^2_m |e_R^{n+1}|^2.
\endaligned
\end{equation} 

For the second term in the right-hand side of above equation \eqref{err-e19}, using Cauchy-Schwartz inequality, we have
\begin{equation}\label{err-e21}
\aligned
\displaystyle-\frac{e_R^{n+1}}{M}\left(\frac{\phi(t^{n+1})-\phi(t^n)}{\Delta t}+\frac{\partial \phi(t^{n+1})}{\partial t},d_te_\phi^{n+1}\right)_m
&\displaystyle\leq\frac{|e_R^{n+1}|}{M}\|\frac{\phi(t^{n+1})-\phi(t^n)}{\Delta t}+\frac{\partial \phi(t^{n+1})}{\partial t}\|_m \|d_te_\phi^{n+1}\|_m\\
&\displaystyle\leq\frac18\|d_t e_\phi^{n+1}\|^2_m +C|e_R^{n+1}|^2.
\endaligned
\end{equation} 
 
Combining \eqref{err-e19} with above inequalities \eqref{err-e20}$\sim$\eqref{err-e21}, we can obtain that
\begin{equation*}
\aligned
\displaystyle\frac{|e_R^{n+1}|^2-|e_R^{n}|^2}{2\Delta t}+\frac{|e_R^{n+1}-e_R^{n}|^2}{2\Delta t}
\leq \frac14\|d_t e_\phi^{n+1}\|^2_m +C\|d_t e_\phi^{n+1}\|^2_m |e_R^{n+1}|^2+C|e_R^{n+1}|^2.
\endaligned
\end{equation*} 
\end{proof}

\begin{lemma}\label{err-lemma4}
Under the conditions of Theorem \ref{err-estimate}, there exists two positive constants $C^*$ and $C_*$ independent $h$ and $\Delta t$ such that
\begin{equation*}
\aligned
&\|\phi_h^n\|_\infty\leq C^*,\\
&\|e_\phi^{n}\|_m+\|\nabla_h e_\phi^{n}\|_{TM}+|e_R^n|\leq C_*(\Delta t+h^2)^{\frac12}.
\endaligned
\end{equation*} 
\end{lemma}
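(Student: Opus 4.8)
The plan is to close the bootstrap by a \emph{strong induction} on the time index $n$, in which the a priori hypotheses \eqref{err-e4a} and \eqref{err-e4b}---which were assumed in the proofs of Lemmas \ref{err-lemma1}--\ref{err-lemma3}---are verified one level at a time, so that as a by-product the estimate of Theorem \ref{err-estimate} emerges simultaneously. For the base case $n=0$ one takes $\phi_h^0$ to be the exact initial datum, so $e_\phi^0=0$ and $e_R^0=0$ and $\|\phi_h^0\|_\infty\le\|\phi(0)\|_{L^\infty}$; both hypotheses hold trivially. For the inductive step I would assume \eqref{err-e4a}--\eqref{err-e4b} hold at every level up to $n$ and re-establish them at level $n+1$.

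The core of the step is to combine the three differential inequalities of Lemmas \ref{err-lemma1}--\ref{err-lemma3} with carefully chosen weights. First I would add the $e_R$-estimate \eqref{err-e23} to the $d_te_\phi$-estimate \eqref{err-lemma1-e1}, so that the $\tfrac14\|d_te_\phi^{n+1}\|_m^2$ on the right of \eqref{err-e23} is absorbed by the $\tfrac{K_1}{2}\|d_te_\phi^{n+1}\|_m^2$ on the left of \eqref{err-lemma1-e1} (after rescaling \eqref{err-e23} by a fixed small factor if $K_1\le\tfrac12$). The genuinely nonlinear term $C\|d_te_\phi^{n+1}\|_m^2|e_R^{n+1}|^2$ is the delicate one: here I would invoke the induction hypothesis $|e_R^{n+1}|^2\le C_*^2(\Delta t+h^2)$ so that its coefficient is $O(\Delta t+h^2)$ and, for $\Delta t$ and $h$ small, it too is absorbed into the surviving fraction of $\|d_te_\phi^{n+1}\|_m^2$ on the left. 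Adding the $\|e_\phi\|_m$-estimate \eqref{err-e18}, then multiplying by $\Delta t$, summing from $0$ to $k$, telescoping the discrete increment terms (using $R_h^{n+1}\le R_h^n$ from Theorem \ref{fully-scheme-theorem1} to handle the weighted gradient contributions of Lemma \ref{err-lemma1} and the clean $\|\nabla_he_\phi^{k+1}\|_{TM}^2$ from Lemma \ref{err-lemma2}), and applying the discrete Gronwall inequality yields exactly
\begin{equation*}
\sum_{n=1}^{k}\Delta t\|d_te_\phi^{n+1}\|_m^2+\|\nabla_he_\phi^{k+1}\|_{TM}^2+\|e_\phi^{k+1}\|_m^2+|e_R^{k+1}|^2\le C(h^4+\Delta t^2),
\end{equation*}
with $C=C(T)$ independent of $n$, $h$, $\Delta t$. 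Taking square roots gives $\|e_\phi^{k+1}\|_m+\|\nabla_he_\phi^{k+1}\|_{TM}+|e_R^{k+1}|\le C(h^2+\Delta t)$, and since $C(h^2+\Delta t)=C(h^2+\Delta t)^{1/2}(h^2+\Delta t)^{1/2}\le C_*(\Delta t+h^2)^{1/2}$ once $h,\Delta t$ are small enough (relative to the fixed $C_*$), the bound \eqref{err-e4b} is recovered at level $n+1$ with the \emph{same} constant, closing that half of the induction.

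Finally, for the $L^\infty$ bound \eqref{err-e4a} I would split $\phi_h^{n+1}=\phi(t^{n+1})+e_\phi^{n+1}$, bound $\|\phi(t^{n+1})\|_\infty$ by the regularity $\phi\in L^\infty(0,T;W^{4,\infty}(\Omega))$, and control $\|e_\phi^{n+1}\|_\infty$ via a discrete Sobolev/inverse inequality together with the $H^1$ bound just obtained, e.g. $\|e_\phi^{n+1}\|_\infty\le C|\ln h|^{1/2}\bigl(\|e_\phi^{n+1}\|_m+\|\nabla_he_\phi^{n+1}\|_{TM}\bigr)\le C|\ln h|^{1/2}(h^2+\Delta t)$, which tends to $0$; hence $\|\phi_h^{n+1}\|_\infty\le\|\phi\|_{L^\infty(0,T;L^\infty)}+1=:C^*$. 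The main obstacle is precisely the feedback term $C\|d_te_\phi^{n+1}\|_m^2|e_R^{n+1}|^2$ of Lemma \ref{err-lemma3}: it multiplies the top-order quantity by the \emph{current-level} error $|e_R^{n+1}|^2$, so it can only be absorbed once one already knows $|e_R^{n+1}|$ is small---exactly what the induction is establishing. Care is therefore needed to make that smallness available at the current step; this is handled by carrying the hypothesis \eqref{err-e4b} up through level $n+1$ within the induction (equivalently, by a short continuation argument on the first level at which \eqref{err-e4b} could fail), which is what distinguishes this implicit-$R_h$ treatment from the standard explicit analyses.
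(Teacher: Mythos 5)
Your proposal is essentially correct and follows the same bootstrap-by-induction strategy as the paper, but it diverges at two technical points that are worth comparing. First, for the feedback term $C\|d_te_\phi^{n+1}\|_m^2|e_R^{n+1}|^2$ from Lemma \ref{err-lemma3}, you absorb it by invoking \emph{smallness} of $|e_R^{n+1}|$ at the current level, which forces the continuation/strong-induction device you describe; the paper instead observes that $|e_R^{n+1}|\le C$ holds \emph{unconditionally}, since $0<R_h^{n+1}\le R_h^0$ by Theorem \ref{fully-scheme-theorem1} and $K_1<R(t^{n+1})<K_2$ by the regularity assumptions, and then re-applies the summed Lemma \ref{err-lemma1} to convert $C\sum\Delta t\|d_te_\phi^{n+1}\|_m^2$ back into Gronwall-compatible quantities (see \eqref{err-e26}). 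The paper's route avoids the circularity entirely and is cleaner; yours works but carries an extra layer of bookkeeping. Second, for recovering the $L^\infty$ bound \eqref{err-e4a}, you use a two-dimensional discrete Sobolev embedding with a $|\ln h|^{1/2}$ factor applied to the $H^1$-type error bound, which needs only $|\ln h|^{1/2}(h^2+\Delta t)\to 0$; the paper uses the cruder inverse inequality $\|e_\phi^{n+1}\|_\infty\le Ch^{-1}\|e_\phi^{n+1}\|_m$ together with an interpolant $\Pi_h$, which forces the mesh-ratio restriction $\Delta t\le C_2h^2$. Your variant buys a weaker coupling between $\Delta t$ and $h$, provided the discrete Sobolev inequality is available for the finite-difference norms in use; the paper's is more elementary. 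One small caution: your base case takes $e_\phi^0=0$ and jumps straight into the induction, whereas the paper separately verifies $\|\phi_h^1\|_\infty\le C$ from the scheme at $n=0$ before defining $C^*$; you should make sure $C^*$ is fixed (depending only on $\|\phi\|_{L^\infty(0,T;L^\infty)}$ and the first step) \emph{before} the induction begins, as the paper does, so that the constants in Lemmas \ref{err-lemma1}--\ref{err-lemma3} do not silently depend on the quantity being bounded.
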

\begin{proof}
Using the scheme \eqref{fully-scheme1} for $n=0$ and applying the inverse assumption, we can get the approximation $\phi_h^1$ with the following property:
\begin{equation*}
\aligned
\|\phi_h^1\|_{\infty}\leq&\|\phi_h^1-\phi(t^1)\|_{\infty}+\|\phi(t^1)\|_{\infty}
\leq \|\phi_h^1-\Pi_h\phi(t^1)\|_{\infty}+\|\Pi_h\phi(t^1)-\phi(t^1)\|_{\infty}+\|\phi(t^1)\|_{\infty}\\
\leq&Ch^{-1}(\|\phi_h^1-\phi(t^1) \|_m+\|\phi(t^1)-\Pi_h\phi(t^1)\|_m)+\|\Pi_h\phi(t^1)-\phi(t^1)\|_{\infty}+\|\phi(t^1)\|_{\infty}\\
\leq&C(h+h^{-1}\Delta t)+\|\phi(t^1)\|_{\infty}\leq C.
\endaligned
\end{equation*}
where $\Pi_h$ is an bilinear interpolant operator with the following estimate:
\begin{equation}\label{e_H1_error_estimate_boundedness_added2}
\aligned
\|\Pi_h\phi^1-\phi^1\|_{\infty}\leq Ch^2.
\endaligned
\end{equation}
 Thus we can choose the positive constant $C^*$ independent of $h$ and $\Delta t$ such that
\begin{align*}
C^*&\geq \max\{\|\phi_h^{1}\|_{\infty}, 2\|\phi(t^{n})\|_{\infty}\}.
\end{align*}

By the definition of $C^*$, it is trivial that hypothesis \eqref{err-e4a} holds true for $n=1$. Supposing that $\|\phi_h^{k}\|_{\infty}\leq C^*$ holds true for an integer $k=1,\cdots,n$, with the aid of Lemmas \ref{err-lemma1}$\sim$\ref{err-lemma3}, we have that
$$\|\phi_h^{k}-\phi(t^k)\|_m\leq C(\Delta t+h^2).$$
Next we prove that $\|\phi_h^{n+1}\|_{\infty}\leq C^*$ holds true.
Since
\begin{equation}\label{e_H1_error_estimate_boundedness_added3}
\aligned
\|\phi_h^{n+1}\|_{\infty}
&\leq\|\phi_h^{n+1}-\phi(t^{n+1})\|_{\infty}+\|\phi(t^{n+1})\|_{\infty}\\
&\leq \|\phi_h^{n+1}-\Pi_h\phi(t^{n+1})\|_{\infty}+\|\Pi_h\phi(t^{n+1})-\phi(t^{n+1})\|_{\infty}+\|\phi(t^{n+1})\|_{\infty}\\
&\leq Ch^{-1}(\|\phi_h^{n+1}-\phi(t^{n+1}) \|_m+\|\phi(t^{n+1})-\Pi_h\phi(t^{n+1})\|_m)\\
&\quad+\|\Pi_h\phi(t^{n+1})-\phi(t^{n+1})\|_{\infty}+\|\phi(t^{n+1})\|_{\infty}\\
&\leq C_1(h+h^{-1}\Delta t)+\|\phi(t^{n+1})\|_{\infty}.
\endaligned
\end{equation}
Let $\Delta t\leq C_2h^2$ and a positive constant $h_1$ be small enough to satisfy
$$C_1(1+C_2)h_1\leq\frac{C^*}{2}.$$
Then for $h\in (0,h_1],$ we derive from (\ref{e_H1_error_estimate_boundedness_added3}) that
\begin{equation*}
\aligned
\|\phi_h^{n+1}\|_{\infty}
\leq&C_1(h+h^{-1}\Delta t)+\|\phi(t^{n+1})\|_{\infty}\\
\leq &C_1(h_1+C_2h_1)+\frac{C^*}{2}
\leq C^*.
\endaligned
\end{equation*}
This completes the induction \eqref{err-e4a}.   

We next give a proof of the second inequality \eqref{err-e4b}. It is obvious that $\|e_\phi^{0}\|_m+\|\nabla_h e_\phi^{0}\|_{TM}+|e_R^0|=0\leq C_*(\Delta t+h^2)^{\frac12}$. Assume that $\|e_\phi^{k}\|_m+\|\nabla_h e_\phi^{k}\|_{TM}+|e_R^k|\leq C_*(\Delta t+h^2)^{\frac12}$ for all $n=1,2,\ldots k$, then for $n=k+1$, from Theorem \ref{err-estimate}, we have
\begin{equation}\label{err-lemma4-e21}
\aligned
\|e_\phi^{k+1}\|_m +\|\nabla_h e_\phi^{k+1}\|_{TM}+|e_R^{k+1}|\leq C(\Delta t+h^2).
\endaligned
\end{equation} 

We choose sufficiently small $\Delta t$ and $h$ such that $C(\Delta t+h^2)^{\frac12}\leq C_*$, then above equality \eqref{err-lemma4-e21} can be transformed as
\begin{equation}\label{err-lemma4-e22}
\aligned
\|e_\phi^{k+1}\|_m +\|\nabla_h e_\phi^{k+1}\|_{TM}+|e_R^{k+1}|\leq C_*(\Delta t+h^2)^{\frac12},
\endaligned
\end{equation} 
which completes the proof.
\end{proof}

We are now in position to prove our main results of Theorem \ref{err-estimate}. Combining Lemmas \ref{err-lemma1}$\sim$\ref{err-lemma3}, we have
\begin{equation}\label{err-e24}
\aligned
&\displaystyle\frac{K_1}{4}\|d_t e_\phi^{n+1}\|^2_m +R_h^{n+1}\epsilon^2M\frac{\|\nabla_he_\phi^{n+1}\|_{TM}^2}{2\Delta t}+K_1\frac{\|e_\phi^{n+1}\|_m^2-\|e_\phi^{n}\|_m^2}{2\Delta t}+K_1\frac{\|e_\phi^{n+1}-e_\phi^n\|_m^2}{2\Delta t}\\
&+Ms\epsilon^2K_1\frac{\|\nabla_he_\phi^{n+1}\|_{TM}^2-\|\nabla_he_\phi^{n}\|_{TM}^2}{2}+Ms\epsilon^2K_1\frac{\|\nabla_he_\phi^{n+1}-\nabla_he_\phi^n\|_{TM}^2}{2}\\
&+K_1\frac{|e_R^{n+1}|^2-|e_R^n|^2}{2\Delta t}+K_1\frac{|e_R^{n+1}-e_R^{n}|^2}{2\Delta t}+\left(s-\frac{R^{n+1}}{2(E_h(\phi^n)+C)}\right)\epsilon^2MK_1\frac{\|\nabla_he_\phi^{n+1}-\nabla_he_\phi^n\|_{TM}^2}{\Delta t}\\
&\leq C|e_R^{n+1}|^2+C\|\nabla_he_\phi^n\|_{TM}^2+C\|e_\phi^n\|_m^2+C\|d_t e_\phi^{n+1}\|^2_m |e_R^{n+1}|^2++R_h^{n}\epsilon^2M\frac{\|\nabla_he_\phi^n\|_{TM}^2}{2\Delta t}+C(h^4+\Delta t^2).
\endaligned
\end{equation} 

Multiplying both sides of above inequality \eqref{err-e24} by $\Delta t$ and making summation on $n$ from $0$ to $k$ yields
\begin{equation}\label{err-e25}
\aligned
&\displaystyle\frac{K_1}{4}\sum\limits_{n=0}^{k}\Delta t\|d_t e_\phi^{n+1}\|^2_m +\frac{\epsilon^2M}{2}\sum\limits_{n=0}^{k}R_h^{n+1}\|\nabla_he_\phi^{n+1}\|_{TM}^2+\frac{K_1}{2}\|e_\phi^{k+1}\|_m^2+\frac{K_1}{2}\sum\limits_{n=0}^{k}\|e_\phi^{n+1}-e_\phi^n\|_m^2\\
&+\frac{Ms\epsilon^2K_1}{2}\Delta t\|\nabla_he_\phi^{k+1}\|_{TM}^2+\frac{Ms\epsilon^2K_1}{2}\sum\limits_{n=0}^{k}\Delta t\|\nabla_he_\phi^{n+1}-\nabla_he_\phi^n\|_{TM}^2+\frac{K_1}{2}|e_R^{k+1}|^2\\
&+\frac{K_1}{2}\sum\limits_{n=0}^{k}|e_R^{n+1}-e_R^{n}|^2+\left(s-\frac{R^{n+1}}{2(E_h(\phi^n)+C)}\right)\epsilon^2MK_1\sum\limits_{n=0}^{k}\|\nabla_he_\phi^{n+1}-\nabla_he_\phi^n\|_{TM}^2\\
&\leq C\sum\limits_{n=0}^{k}\Delta t|e_R^{n+1}|^2+C\sum\limits_{n=0}^{k}\Delta t\|\nabla_he_\phi^n\|_{TM}^2+C\sum\limits_{n=0}^{k}\Delta t\|e_\phi^n\|_m^2+C\sum\limits_{n=0}^{k}\Delta t\|d_t e_\phi^{n+1}\|^2_m |e_R^{n+1}|^2\\
&\quad+\frac{\epsilon^2M}{2}\sum\limits_{n=0}^{k}R_h^{n}\|\nabla_he_\phi^n\|_{TM}^2+C(h^4+\Delta t^2).
\endaligned
\end{equation} 

Noting that $0<R_h^{n+1}<R^0$, we have $|e_R^{n+1}|^2\leq C$ for a constant $C$. Then for the fourth term in the right-hand side of above inequality \eqref{err-e25}, using Lemma \ref{err-lemma1}, we have
\begin{equation}\label{err-e26}
\aligned
C\sum\limits_{n=0}^{k}\Delta t & \|d_t e_\phi^{n+1}\|^2_m  |e_R^{n+1}|^2
\leq C\sum\limits_{n=0}^{k}\Delta t\|d_t e_\phi^{n+1}\|^2_m \\
&\leq C\sum\limits_{n=0}^{k}\Delta t|e_R^{n+1}|^2+C\sum\limits_{n=0}^{k}\Delta t\|\nabla_he_\phi^n\|_{TM}^2+C\sum\limits_{n=0}^{k}\Delta t\|e_\phi^n\|_m^2+C(h^4+\Delta t^2).
\endaligned
\end{equation} 

Subtracting above inequality \eqref{err-e26} into \eqref{err-e25}, we have
\begin{equation}\label{err-e27}
\aligned
&\displaystyle\frac{K_1}{4}\sum\limits_{n=0}^{k}\Delta t\|d_t e_\phi^{n+1}\|^2_{m} +\frac{\epsilon^2M}{2}R_h^{k+1}\|\nabla_he_\phi^{k+1}\|_{TM}^2+\frac{K_1}{2}\|e_\phi^{k+1}\|_m^2+\frac{K_1}{2}|e_R^{k+1}|^2\\
&\leq C\sum\limits_{n=1}^{k}\Delta t|e_R^{n+1}|^2+C\sum\limits_{n=1}^{k}\Delta t\|\nabla_he_\phi^n\|_{TM}^2+C\sum\limits_{n=1}^{k}\Delta t\|e_\phi^n\|_m^2+C(h^4+\Delta t^2).
\endaligned
\end{equation} 

From energy dissipation law in Theorem \ref{PSSAV-theorem2}, we know that 
\begin{equation}\label{err-e28}
\aligned
R_h^{k+1}=R_h^k-\frac{\Delta t}{M}\|d_t\phi^{n+1}\|^2_m =R_h^k-\frac{\Delta t}{M}\|d_te_\phi^{n+1}+d_t\phi(t^{n+1})\|^2_m.
\endaligned
\end{equation}

Subtracting above equation \eqref{err-e28} into the second term of the left-hand side of \eqref{err-e27}, we have
\begin{equation}\label{err-e29}
\aligned
\frac{\epsilon^2M}{2}R_h^{k+1}\|\nabla_he_\phi^{k+1}\|_{TM}^2=\frac{\epsilon^2M}{2}R_h^k\|\nabla_he_\phi^{k+1}\|_{TM}^2-\frac{\epsilon^2\Delta t}{2}\|d_te_\phi^{k+1}+d_t\phi(t^{k+1})\|^2_m \|\nabla_he_\phi^{k+1}\|_{TM}^2
\endaligned
\end{equation} 
 
For the first term in the right-hand side of above equation \eqref{err-e29}, noting that $R(t^k)=E(\phi(t^k))+C>K_1$, we have   
\begin{equation}\label{err-e30}
\aligned
\frac{\epsilon^2M}{2}R_h^k\|\nabla_he_\phi^{k+1}\|_{TM}^2=\frac{\epsilon^2M}{2}(e_R^k+R(t^k))\|\nabla_he_\phi^{k+1}\|_{TM}^2\geq\frac{\epsilon^2MK_1}{4}\|\nabla_he_\phi^{k+1}\|_{TM}^2.
\endaligned
\end{equation}

For the second term in the right-hand side of above equation \eqref{err-e29}, using Cauchy-Schwartz inequality, we have   
\begin{equation}\label{err-e31}
\aligned
\frac{\epsilon^2\Delta t}{2}\|d_te_\phi^{k+1}+d_t\phi(t^{n+1})\|^2_m \|\nabla_he_\phi^{k+1}\|_{TM}^2
&\leq\epsilon^2\Delta t\left(\|d_te_\phi^{k+1}\|^2_m +\|d_t\phi(t^{k+1})\|^2_m \right)\|\nabla_he_\phi^{k+1}\|_{TM}^2\\
&\leq\epsilon^2\Delta t\|d_te_\phi^{k+1}\|^2_m \|\nabla_he_\phi^{k+1}\|_{TM}^2+C\Delta t\|\nabla_he_\phi^{k+1}\|_{TM}^2.
\endaligned
\end{equation} 

Multiplying both sides of above inequality \eqref{err-lemma1-e1} by $2\epsilon^2\Delta t$ and using \eqref{err-e4b} yield
\begin{equation}\label{err-e32}
\aligned
\epsilon^2\Delta t\|d_te_\phi^{k+1}\|^2_m 
&\leq C\Delta t|e_R^{k+1}|^2+C\Delta t\|\nabla_he_\phi^k\|_{TM}^2+C\Delta t\|e_\phi^{k}\|_m^2+R_h^{k}\epsilon^4M\|\nabla_he_\phi^k\|_{TM}^2+C(h^4+\Delta t^2)\\
&\leq C\Delta t|e_R^{k+1}|^2+C(\Delta t+h^2).
\endaligned
\end{equation}

Noting that $0<R_h^{k+1}<R^0$ and $K_1<R(t^{k+1})<K_2$, we have $|e_R^{k+1}|<C$. Combining inequality \eqref{err-e32} with \eqref{err-e31}, we obtain 
\begin{equation}\label{err-e33}
\aligned
\frac{\epsilon^2\Delta t}{2}\|d_te_\phi^{k+1}+d_t\phi(t^{n+1})\|^2_m \|\nabla_he_\phi^{k+1}\|_{TM}^2
\leq C\Delta t\|\nabla_he_\phi^{k+1}\|_{TM}^2.
\endaligned
\end{equation}

Subtracting \eqref{err-e28}$\sim$\eqref{err-e33} into \eqref{err-e27}, we have 
\begin{equation}\label{err-e34}
\aligned
&\displaystyle\frac{K_1}{4}\sum\limits_{n=0}^{k}\Delta t\|d_t e_\phi^{n+1}\|^2_m +\frac{\epsilon^2MK_1}{4}\|\nabla_he_\phi^{k+1}\|_{TM}^2+\frac{K_1}{2}\|e_\phi^{k+1}\|_m^2+\frac{K_1}{2}|e_R^{k+1}|^2\\
&\leq C\sum\limits_{n=1}^{k}\Delta t|e_R^{n+1}|^2+C\sum\limits_{n=1}^{k}\Delta t\|\nabla_he_\phi^n\|_{TM}^2+C\sum\limits_{n=1}^{k}\Delta t\|e_\phi^n\|_m^2+C(h^4+\Delta t^2).
\endaligned
\end{equation} 
 
Using Gronwall inequality for above inequality, we obtain  
\begin{equation}\label{err-e35}
\aligned
&\displaystyle\frac{K_1}{4}\sum\limits_{n=1}^{k}\Delta t\|d_t e_\phi^{n+1}\|^2_m +\frac{\epsilon^2MK_1}{4}\|\nabla_he_\phi^{k+1}\|_{TM}^2+\frac{K_1}{2}\|e_\phi^{k+1}\|_m^2+\frac{K_1}{2}|e_R^{k+1}|^2\leq C(h^4+\Delta t^2).
\endaligned
\end{equation} 
\section{Second-order PS-SAV scheme}
A similar PS-SAV approach can also be extended to construct a second-order Crank-Nicloson formulation for the $L^2$ gradient flow. We find that a straightforward extension of the first-order PS-SAV scheme to the second-order scheme can not preserve the positive property of $R^{n+1}$. we add a stabilization term $s^{n+1}\Delta t(R^{n+1}-R^n)$ to overcome this problem. 

The second-order PS-SAV scheme based on the Crank-Nicolson formulation is given by:
\begin{equation}\label{PSSAV-e10}
   \begin{array}{l}
\displaystyle\frac{\phi^{n+1}-\phi^n}{\Delta t}=-M\mu^{n+\frac12},\\
\displaystyle\mu^{n+\frac12}=\frac12\epsilon^2A(\phi^{n+1}+\phi^{n})+(\frac{R^{n+1}+R^n}{2(E(\widehat{\phi}^{n+\frac{1}{2}})+C)}-1)\epsilon^2A\phi^{n}+\frac{R^{n+1}+R^n}{2(E(\widehat{\phi}^{n+\frac{1}{2}})+C)}F'(\widehat{\phi}^{n+\frac{1}{2}}),\\
\displaystyle \frac{R^{n+1}-R^n}{\Delta t}+s^{n+1}\Delta t(R^{n+1}-R^n)=\displaystyle-\frac1M(\frac{\phi^{n+1}-\phi^n}{\Delta t},\frac{\phi^{n+1}-\phi^n}{\Delta t}).
   \end{array}
\end{equation}

The second equation in \eqref{PSSAV-e10} can be rewritten as the following equivalent system:
\begin{equation}\label{PSSAV-e10-2}
\displaystyle\mu^{n+\frac12}=\frac12\epsilon^2A(\phi^{n+1}-\phi^{n})+\frac{R^{n+1}+R^n}{2(E(\widehat{\phi}^{n+\frac{1}{2}})+C)}\left[\epsilon^2A\phi^{n}+F'(\widehat{\phi}^{n+\frac{1}{2}})\right].
\end{equation} 
Combining the first equation in \eqref{PSSAV-e10} with the equivalent equation \eqref{PSSAV-e10-2} of the second one, we can rewrite \eqref{PSSAV-e10} equivalently as the following formulation:
\begin{equation}\label{PSSAV-e11}
   \begin{array}{l}
\displaystyle2(E(\widehat{\phi}^{n+\frac{1}{2}})+C)(I+\frac{1}{2}M\epsilon^2\Delta tA)\frac{\phi^{n+1}-\phi^n}{\Delta t}=-M(R^{n+1}+R^n)\left[\epsilon^2A \phi^{n}+F'(\widehat{\phi}^{n+\frac{1}{2}})\right],\\
\displaystyle \frac{1}{\Delta t}(R^{n+1}-R^n)+s^{n+1}\Delta t(R^{n+1}-R^n)=\displaystyle-\frac1M(\frac{\phi^{n+1}-\phi^n}{\Delta t},\frac{\phi^{n+1}-\phi^n}{\Delta t}).
   \end{array}
\end{equation}
Setting $\phi^{n+1}=\phi^{n}+\Delta t(R^{n+1}+R^n)\phi_1^{n+1}$, we also find that $\phi_1^{n+1}$ is the solution of the following linear equation with constant coefficients:
\begin{equation}\label{PSSAV-e12}
\displaystyle2(E(\widehat{\phi}^{n+\frac{1}{2}})+C)(I+M\epsilon^2\Delta tA)\phi_1^{n+1}=-M\left[\epsilon^2A \phi^{n}+F'(\widehat{\phi}^{n+\frac{1}{2}})\right].
\end{equation}
Once $\phi_1^{n+1}$ is known, noting that  
\begin{equation}\label{PSSAV-e13}
\displaystyle\phi^{n+1}-\phi^n=\Delta t(R^{n+1}+R^n)\phi_1^{n+1},
\end{equation}
and combining it with the second equation in \eqref{PSSAV-e11}, we get
\begin{equation}\label{PSSAV-e14}
\displaystyle a(R^{n+1})^2+bR^{n+1}+c=0,
\end{equation}
where the coefficients $a$, $b$ and $c$ of the above quadratic equation satisfy
\begin{equation*}
\aligned
&a=(\phi_1^{n+1},\phi_1^{n+1}),\quad b=\frac{M}{\Delta t}+Ms^{n+1}\Delta t+2R^n(\phi_1^{n+1},\phi_1^{n+1}),\\ &c=(R^n)^2(\phi_1^{n+1},\phi_1^{n+1})-\frac{M}{\Delta t}R^n-Ms^{n+1}\Delta tR^n.
\endaligned
\end{equation*}   
If $\phi_1^{n+1}=0$, we set $s^{n+1}=0$. Then we have $a=0$, $b=\frac{M}{\Delta t}$ and $c=-\frac{M}{\Delta t}R^n$, then we immediately obtain $\phi^{n+1}=\phi^n$ and $R^{n+1}=R^n$. If $\phi_1^{n+1}\neq0$, we obtain $(\phi_1^{n+1},\phi_1^{n+1})>0$. Then the above equation \eqref{PSSAV-e14} is a quadratic equation with one variable for $R^{n+1}$. 

\begin{theorem}\label{PSSAV-theorem3}
If we choose the stabilized variable $s^{n+1}$ to satisfy 
 \begin{equation}\label{PSSAV-e15}
s^{n+1}=\left\{
   \begin{array}{llr}
0,&&R^n(\phi_1^{n+1},\phi_1^{n+1})\leq\frac{M}{\Delta t},\\
\frac{1}{M\Delta t}R^n(\phi_1^{n+1},\phi_1^{n+1})-\frac{1}{\Delta t^2},&& R^n(\phi_1^{n+1},\phi_1^{n+1})>\frac{M}{\Delta t}.
   \end{array}
   \right.
\end{equation} 
then the quadratic equation with one variable for $R^{n+1}$ \eqref{PSSAV-e11} has and only one positive solution:
\begin{equation}\label{PSSAV-e16}
R^{n+1}=\frac{-b+\sqrt{b^2-4ac}}{2a}>0.
\end{equation}
\end{theorem}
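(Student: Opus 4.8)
The plan is to treat \eqref{PSSAV-e14} as a genuine quadratic in $R^{n+1}$ and to pin down the signs of its three coefficients $a,b,c$, from which existence and uniqueness of a positive root follow by an elementary root analysis, exactly in the spirit of the proof of Theorem \ref{PSSAV-theorem1}. First I would record the two standing facts in the branch under consideration: since $\phi_1^{n+1}\neq 0$ we have $a=(\phi_1^{n+1},\phi_1^{n+1})>0$, so the parabola opens upward; and $R^n>0$ by induction (with base case $R^0=E(\phi_0)+C>0$). These reduce the statement to a sign count for $b$ and $c$.

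The decisive step is the sign of the constant term. Factoring out $R^n$,
\begin{equation*}
c=R^n\Big[R^n(\phi_1^{n+1},\phi_1^{n+1})-\tfrac{M}{\Delta t}-Ms^{n+1}\Delta t\Big],
\end{equation*}
so that, since $R^n>0$, the sign of $c$ is governed by the bracket. I would then substitute the piecewise choice \eqref{PSSAV-e15} into this bracket: in the regime $R^n(\phi_1^{n+1},\phi_1^{n+1})\le \tfrac{M}{\Delta t}$ one has $s^{n+1}=0$, so the bracket equals $R^n(\phi_1^{n+1},\phi_1^{n+1})-\tfrac{M}{\Delta t}\le 0$; in the regime $R^n(\phi_1^{n+1},\phi_1^{n+1})> \tfrac{M}{\Delta t}$ the stabilizer is calibrated so that $Ms^{n+1}\Delta t=R^n(\phi_1^{n+1},\phi_1^{n+1})-\tfrac{M}{\Delta t}$, which cancels the bracket. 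In both cases $c\le 0$. Substituting the same two expressions for $s^{n+1}$ into $b$ yields $b=\tfrac{M}{\Delta t}+2R^n(\phi_1^{n+1},\phi_1^{n+1})$ in the first regime and $b=3R^n(\phi_1^{n+1},\phi_1^{n+1})$ in the second, so $b>0$ throughout. This is the heart of the matter: without stabilization the large-$R^n$ regime gives $c>0$ together with $b>0$, forcing both roots negative and destroying positivity, whereas the tuned $s^{n+1}$ drags $c$ back to $c\le 0$.

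With $a>0$ and $c\le 0$ established, the discriminant satisfies $b^2-4ac\ge b^2>0$, so \eqref{PSSAV-e14} has two distinct real roots $R_\pm=\tfrac{-b\pm\sqrt{b^2-4ac}}{2a}$. Because $a>0$ and $b>0$, the numerator $-b-\sqrt{b^2-4ac}$ is strictly negative, hence $R_-<0$, so at most one root is positive. For the remaining root I would use the equivalence $R_+>0\iff \sqrt{b^2-4ac}>b\iff -4ac>0\iff c<0$ (legitimate since both sides of the first inequality are nonnegative), which identifies $R_+=\tfrac{-b+\sqrt{b^2-4ac}}{2a}$ as the unique positive solution and matches \eqref{PSSAV-e16}.

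The main obstacle I anticipate is the exact sign accounting for $c$ across the piecewise definition \eqref{PSSAV-e15}: one must verify that the second branch cancels the bracket $R^n(\phi_1^{n+1},\phi_1^{n+1})-\tfrac{M}{\Delta t}$ so that $c\le 0$. I would also flag the boundary case $c=0$, which occurs at the threshold and throughout the stabilized regime and yields $R_+=0$ rather than $R_+>0$; strict positivity needs $c<0$, i.e.\ the strict inequality $R^n(\phi_1^{n+1},\phi_1^{n+1})<\tfrac{M}{\Delta t}$, so the degenerate case should either be shown not to arise or be handled separately as in the $\phi_1^{n+1}=0$ discussion preceding the theorem.
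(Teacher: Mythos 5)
Your proposal follows the same route as the paper's proof: establish $a>0$, $R^n>0$, argue $c\leq 0$ from the choice of $s^{n+1}$, conclude $\Delta=b^2-4ac>0$, and discard the root $\tfrac{-b-\sqrt{b^2-4ac}}{2a}<0$. The difference is that you actually carry out the sign accounting for $c$, whereas the paper simply asserts ``we are easy to obtain $c<0$'' --- and your computation shows that assertion is false in the stabilized branch. Indeed, with $s^{n+1}=\tfrac{1}{M\Delta t}R^n(\phi_1^{n+1},\phi_1^{n+1})-\tfrac{1}{\Delta t^2}$ one gets $Ms^{n+1}\Delta t=R^n(\phi_1^{n+1},\phi_1^{n+1})-\tfrac{M}{\Delta t}$ exactly, so $c=R^n\bigl[R^n(\phi_1^{n+1},\phi_1^{n+1})-\tfrac{M}{\Delta t}-Ms^{n+1}\Delta t\bigr]=0$, and likewise $c=0$ at the threshold of the first branch. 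In that case the quadratic degenerates to $aR^2+bR=0$ with roots $0$ and $-b/a<0$, so the claimed strictly positive solution \eqref{PSSAV-e16} does not exist; the theorem as stated only holds with $R^{n+1}\geq 0$ unless $s^{n+1}$ is chosen to make the bracket strictly negative (e.g.\ by taking $s^{n+1}$ strictly larger than the prescribed value, or by treating the degenerate case separately as is done for $\phi_1^{n+1}=0$). Your flag of this boundary case is the correct and substantive observation here; the rest of your argument (positivity of $b$ in both regimes, $\sqrt{b^2-4ac}>b\iff c<0$) is sound and matches what the paper intends.
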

\begin{proof}
Noting that $R^{0}=E(\phi_0)+C>0$, we then assume that $R^n>0$. Noting that $a>0$, if the stabilized variable $s^{n+1}$ is chosen as in \eqref{PSSAV-e15}, then we are easy to obtain $c<0$, then the quadratic equation \eqref{PSSAV-e11} is determined to have a solution because of 
\begin{equation*}
\aligned
\displaystyle \Delta=b^2-4ac>0.
\endaligned
\end{equation*}
Similarly, one can see that \eqref{PSSAV-e14} has the following two solutions:
\begin{equation*}
\begin{array}{l}
\displaystyle R_1^{n+1}=\frac{-b-\sqrt{b^2-4ac}}{2a}<0,\\
\displaystyle R_2^{n+1}=\frac{-b+\sqrt{b^2-4ac}}{2a}>0.
\end{array}
\end{equation*}
By the positive property of $R$, we have that $R^{n+1}=R_2^{n+1}$.
\end{proof}

Then we can compute $\phi^{n+1}$ by the following equation:
\begin{equation*}
\displaystyle\phi^{n+1}=\phi^n+\Delta t(R^{n+1}+R^n)\phi_1^{n+1}.
\end{equation*}

To summarize, the Second-order PS-SAV scheme \eqref{PSSAV-e11} can be implemented as follows:
\begin{itemize}
  \item solve $\phi_1^{n+1}$ from \eqref{PSSAV-e12};
  \item compute $R^{n+1}$ from \eqref{PSSAV-e16};
  \item update $\phi^{n+1}=\phi^n+\Delta t(R^{n+1}+R^n)\phi_1^{n+1}$ and goto next time step.
\end{itemize}

We observe that the above procedure only requires solving one linear equation with constant coefficients as in a semi-implicit scheme with stabilization. As for the energy stability, we have the following result easily.
\begin{theorem}\label{PSSAV-theorem4}
Given $R^0>0$, we have $R^n>0$ for all $n>0$, and the second-order PS-SAV scheme \eqref{PSSAV-e11} is unconditionally energy stable in the sense that
\begin{equation*}
R^{n+1}-R^{n}\leq\displaystyle-\frac{\Delta t}{M}(\frac{\phi^{n+1}-\phi^n}{\Delta t},\frac{\phi^{n+1}-\phi^n}{\Delta t})\leq0.
\end{equation*}
\end{theorem}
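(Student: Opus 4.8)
The plan is to prove the two claims of Theorem~\ref{PSSAV-theorem4} in sequence: first the positivity $R^n>0$ for every $n$, then the dissipation of the modified energy. The first claim is what makes the root selection in \eqref{PSSAV-e16} meaningful, and the second is a short sign argument once positivity is in hand.

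For positivity I would argue by induction on $n$. The base case is the hypothesis $R^0=E(\phi_0)+C>0$. For the inductive step, assume $R^n>0$. If $\phi_1^{n+1}=0$, then by the convention $s^{n+1}=0$ the quadratic \eqref{PSSAV-e14} collapses to $\tfrac{M}{\Delta t}(R^{n+1}-R^n)=0$, so $R^{n+1}=R^n>0$. If $\phi_1^{n+1}\neq0$, then $a=(\phi_1^{n+1},\phi_1^{n+1})>0$ and Theorem~\ref{PSSAV-theorem3} applies directly: with $s^{n+1}$ chosen as in \eqref{PSSAV-e15}, the equation \eqref{PSSAV-e14} admits the unique positive root $R^{n+1}=\frac{-b+\sqrt{b^2-4ac}}{2a}>0$. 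In either case $R^{n+1}>0$, which closes the induction.

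For the energy dissipation I would return to the third equation of \eqref{PSSAV-e11} and collect the increment $R^{n+1}-R^n$ on the left:
\begin{equation*}
\left(\frac{1}{\Delta t}+s^{n+1}\Delta t\right)(R^{n+1}-R^n)=-\frac1M\left(\frac{\phi^{n+1}-\phi^n}{\Delta t},\frac{\phi^{n+1}-\phi^n}{\Delta t}\right).
\end{equation*}
The right-hand side is nonpositive, and the prefactor $\frac{1}{\Delta t}+s^{n+1}\Delta t$ is strictly positive because $s^{n+1}\geq0$ in both branches of \eqref{PSSAV-e15} (in the second branch the defining inequality $R^n(\phi_1^{n+1},\phi_1^{n+1})>M/\Delta t$ is precisely what guarantees $s^{n+1}\geq0$). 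Dividing through by this positive factor gives $R^{n+1}-R^n\leq0$, i.e.\ the scheme is unconditionally energy stable. The comparison with the dissipation term then follows from the same factored identity by tracking the stabilization contribution $s^{n+1}\Delta t(R^{n+1}-R^n)$, whose sign is completely determined once $R^{n+1}-R^n\leq0$ is known.

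I expect the only point requiring care to be the bookkeeping attached to the stabilizer $s^{n+1}$: one must check that the piecewise definition \eqref{PSSAV-e15} never renders $s^{n+1}$ negative, so that neither the well-posedness and positivity inherited from Theorem~\ref{PSSAV-theorem3} nor the positivity of the prefactor $\frac{1}{\Delta t}+s^{n+1}\Delta t$ is compromised, and that the degenerate case $\phi_1^{n+1}=0$ is treated consistently with the convention $s^{n+1}=0$. Beyond this, the argument reduces to the same monotonicity computation as in the first-order Theorem~\ref{PSSAV-theorem2}, which is why the result can be asserted ``easily'' once Theorem~\ref{PSSAV-theorem3} is available.
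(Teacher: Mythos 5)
Your induction for positivity and your derivation of $R^{n+1}-R^n\leq 0$ are correct, and they constitute essentially the only argument available: the paper offers no proof of this theorem at all, merely asserting it ``easily'' after Theorem~\ref{PSSAV-theorem3}, so on those two claims you have actually supplied what the paper omits. The one point you defer to ``bookkeeping'' is, however, exactly where the statement breaks down. Writing the third equation of \eqref{PSSAV-e11} as
\begin{equation*}
\frac{R^{n+1}-R^n}{\Delta t}
=-\frac1M\Bigl(\frac{\phi^{n+1}-\phi^n}{\Delta t},\frac{\phi^{n+1}-\phi^n}{\Delta t}\Bigr)
-s^{n+1}\Delta t\,(R^{n+1}-R^n),
\end{equation*}
and using the two facts you have already established, namely $s^{n+1}\geq0$ and $R^{n+1}-R^n\leq0$, the stabilization term $-s^{n+1}\Delta t(R^{n+1}-R^n)$ is \emph{nonnegative}, so the identity yields
\begin{equation*}
-\frac{\Delta t}{M}\Bigl(\frac{\phi^{n+1}-\phi^n}{\Delta t},\frac{\phi^{n+1}-\phi^n}{\Delta t}\Bigr)\;\leq\;R^{n+1}-R^{n}\;\leq\;0,
\end{equation*}
with equality on the left only when $s^{n+1}=0$. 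This is the \emph{reverse} of the first inequality printed in the theorem. Your claim that the comparison with the dissipation term ``follows from the same factored identity by tracking the stabilization contribution'' is therefore not a gap you can close by more careful bookkeeping: carried out honestly, the tracking contradicts the printed chain whenever $s^{n+1}>0$. The printed direction appears to be a misprint (the analogous first-order Theorem~\ref{PSSAV-theorem2} has no stabilizer and holds with equality), but your write-up asserts the printed statement and should instead either flag the reversed direction or restate the conclusion in the form displayed above, which is what your argument actually proves.
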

\section{The PS-SAV approach for $H^{-1}$ gradient flow}
The proposed positivity-preserving technique can also be used to solve $H^{-1}$ gradient flow. By setting $\mathcal{G}=-\Delta$ to transform the gradient flow \eqref{intro-e1} into the following $H^{-1}$ gradient flow:
\begin{equation}\label{H-1-gradient-flow}
   \begin{array}{l}
\displaystyle\frac{\partial \phi}{\partial t}=M\Delta\mu,\\
\displaystyle\mu=\epsilon^2A\phi+F'(\phi).
   \end{array}
\end{equation}

The $H^{-1}$ gradient flow model \eqref{H-1-gradient-flow} is mass preserving since
\begin{equation*}
\forall t\geq0,\quad \frac{d}{dt}\int_\Omega\phi d\mathbf{x}=\int_\Omega\frac{\partial \phi}{\partial t} d\mathbf{x}=0.
\end{equation*}

To construct PS-SAV scheme for the $H^{-1}$ gradient flow \eqref{H-1-gradient-flow}, we need to define the $H^{-1}_{per}$ inner product firstly. Suppose $f\in L^2_0(\Omega)=\{v\in L^2(\Omega)|(v,1)=0\}$, define $\mu_f\in H^2_{per}(\Omega)\cap L_0^2(\Omega)$ to be the unique solution to the following problem with periodic boundary condition:
\begin{equation}\label{PSSAV-H-1-e1}
-\Delta \mu_f=f\quad \text{in}\ \Omega.
\end{equation}
We then define $\mu_f:=(-\Delta)^{-1}f$, and for any $f,g\in L^2_0(\Omega)$, the $H^{-1}_{per}$ inner product and norm can be defined as follows:
\begin{equation}\label{PSSAV-H-1-e2}
(f,g)_{-1}=(\nabla \mu_f,\nabla \mu_g),\quad \|f\|_{-1}=\sqrt{(f,f)_{-1}}.
\end{equation}
It is easy to obtain the following identity:
\begin{equation}\label{PSSAV-H-1-e3}
(f,g)_{-1}=((-\Delta)^{-1}f,g)=(f,(-\Delta)^{-1}g)=(g,f)_{-1}.
\end{equation}

Given a same SAV $R(t)$ with \eqref{PSSAV-e2}, the corresponding derivative equation for $R$ will take the following formulation:
\begin{equation}\label{PSSAV-H-1-e4}
\displaystyle\frac{dR}{dt}=\frac{dE}{dt}=(\frac{\delta E}{\delta\phi},\frac{\partial \phi}{\partial t})=(\mu,\frac{\partial \phi}{\partial t})=\frac{1}{M}\left(-(-\Delta)^{-1}\frac{\partial \phi}{\partial t},\frac{\partial \phi}{\partial t}\right)=-\frac{1}{M}(\frac{\partial \phi}{\partial t},\frac{\partial \phi}{\partial t})_{-1}.
\end{equation}

Combining above equation \eqref{PSSAV-H-1-e4} with \eqref{H-1-gradient-flow}, we can reformulate the $H^{-1}$ gradient flow to the following equivalent system:
\begin{equation}\label{PSSAV-H-1-e5}
   \begin{array}{l}
\displaystyle\frac{\partial \phi}{\partial t}=M\Delta\mu,\\
\displaystyle\mu=\frac{R(t)}{E(\phi)+C}\left(\epsilon^2A\phi+F'(\phi)\right),\\
\displaystyle\frac{dR}{dt}=-\frac{1}{M}(\frac{\partial \phi}{\partial t},\frac{\partial \phi}{\partial t})_{-1}.
   \end{array}
\end{equation}
One can see the third equation in \eqref{PSSAV-H-1-e5} can keep the energy dissipation law.

Similar as the PS-SAV schemes for the $L^2$ gradient flow, we next consider the first-order PS-SAV scheme for the $H^{-1}$ gradient flow \eqref{H-1-gradient-flow}. The first-order PS-SAV scheme based on the backward Euler formulation for the $H^{-1}$ gradient flow \eqref{H-1-gradient-flow} can be given by:
\begin{equation}\label{PSSAV-H-1-e6}
   \begin{array}{l}
\displaystyle\frac{\phi^{n+1}-\phi^n}{\Delta t}=M\Delta\mu^{n+1},\\
\displaystyle\mu^{n+1}=s\epsilon^2(A\phi^{n+1}-A\phi^{n})+\frac{R^{n+1}}{E(\widehat{\phi}^n)+C}\left[\epsilon^2A \widehat{\phi}^{n}+F'(\widehat{\phi}^{n})\right],\\
\displaystyle \frac{R^{n+1}-R^n}{\Delta t}=\displaystyle-\frac{1}{M}(\frac{\phi^{n+1}-\phi^n}{\Delta t},\frac{\phi^{n+1}-\phi^n}{\Delta t})_{-1}.
   \end{array}
\end{equation}
From the first two equations in \eqref{PSSAV-H-1-e6}, we can rewrite \eqref{PSSAV-H-1-e6} equivalently as the following:
\begin{equation}\label{PSSAV-H-1-e7}
   \begin{array}{l}
\displaystyle2(E(\widehat{\phi}^{n+\frac{1}{2}})+C)(I-M\epsilon^2s\Delta t\Delta A)\frac{\phi^{n+1}-\phi^n}{\Delta t}=M(R^{n+1}+R^n)\left[\epsilon^2\Delta A \widehat{\phi}^{n+\frac{1}{2}}+\Delta F'(\widehat{\phi}^{n+\frac{1}{2}})\right],\\
\displaystyle \frac{M}{\Delta t}(R^{n+1}-R^n)=\displaystyle-(\frac{\phi^{n+1}-\phi^n}{\Delta t},\frac{\phi^{n+1}-\phi^n}{\Delta t})_{-1}.
   \end{array}
\end{equation}
Setting $\phi^{n+1}=\phi^{n}+\Delta t(R^{n+1}+R^n)\phi_1^{n+1}$, we also find that $\phi_1^{n+1}$ is the solution of the following linear equation with constant coefficients:
\begin{equation}\label{PSSAV-H-1-e8}
\displaystyle2(E(\widehat{\phi}^{n+\frac{1}{2}})+C)(I-Ms\epsilon^2\Delta t\Delta A)\phi_1^{n+1}=M\left[\epsilon^2\Delta A \widehat{\phi}^{n+\frac{1}{2}}+\Delta F'(\widehat{\phi}^{n+\frac{1}{2}})\right].
\end{equation}
Once $\phi_1^{n+1}$ is known, to compute $R^{n+1}$, we need to solve the following quadratic equation
\begin{equation}\label{PSSAV-H-1-e9}
\displaystyle a(R^{n+1})^2+bR^{n+1}+C=0,
\end{equation}
where the coefficients $a$, $b$ and $c$ satisfy
\begin{equation}\label{PSSAV-H-1-e10}
a=(\phi_1^{n+1},\phi_1^{n+1})_{-1},\quad b=\frac{M}{\Delta t},\quad c=-\frac{M}{\Delta t} R^n.
\end{equation}   
If $\phi_1^{n+1}=0$, we can immediately obtain $\phi^{n+1}=\phi^n$ and $R^{n+1}=R^n$. If $\phi_1^{n+1}\neq0$, we obtain $(\phi_1^{n+1},\phi_1^{n+1})_{-1}>0$. Then the above equation \eqref{PSSAV-H-1-e9} is a quadratic equation with one variable for $R^{n+1}$. 

\begin{theorem}\label{PSSAV-theorem5}
The quadratic equation with one variable for $R^{n+1}$ \eqref{PSSAV-H-1-e9} has and only one positive solution:
\begin{equation}\label{PSSAV-H-1-e11}
R^{n+1}=\frac{-\frac{M}{\Delta t}+\sqrt{\frac{M^2}{\Delta t^2}+4\frac{M}{\Delta t}R^n(\phi_1^{n+1},\phi_1^{n+1})_{-1}}}{2(\phi_1^{n+1},\phi_1^{n+1})_{-1}}>0.
\end{equation}
\end{theorem}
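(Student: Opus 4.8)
The plan is to mirror the argument of Theorem \ref{PSSAV-theorem1} for the $L^2$ case, since the quadratic \eqref{PSSAV-H-1-e9} has exactly the same algebraic shape as the $L^2$ quadratic treated there: the only change is that the $L^2$ inner product is replaced throughout by the $H^{-1}_{per}$ inner product $(\cdot,\cdot)_{-1}$. The single new ingredient is that this replacement preserves positive-definiteness, so that the leading coefficient $a=(\phi_1^{n+1},\phi_1^{n+1})_{-1}$ is strictly positive whenever $\phi_1^{n+1}\neq0$. This follows from the definition \eqref{PSSAV-H-1-e2}: note first that $\phi_1^{n+1}$ has zero mean because the right-hand side of \eqref{PSSAV-H-1-e8} is a Laplacian, so $\phi_1^{n+1}\in L^2_0(\Omega)$ and the $H^{-1}_{per}$ inner product applies; then $(\phi_1^{n+1},\phi_1^{n+1})_{-1}=(\nabla\mu_{\phi_1^{n+1}},\nabla\mu_{\phi_1^{n+1}})\geq0$, with equality only when $\mu_{\phi_1^{n+1}}$ is constant, i.e.\ $\phi_1^{n+1}=-\Delta\mu_{\phi_1^{n+1}}=0$.

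First I would set up the induction on $n$. The base case is $R^0=E(\phi_0)+C>0$ by the assumption that the free energy is bounded below; assuming $R^n>0$, I would then establish $R^{n+1}>0$. With $a>0$ as above and the coefficients read off from \eqref{PSSAV-H-1-e10} as $b=M/\Delta t>0$ and $c=-\frac{M}{\Delta t}R^n<0$, the discriminant is
\begin{equation*}
\Delta=b^2-4ac=\frac{M^2}{\Delta t^2}+4\frac{M}{\Delta t}R^n(\phi_1^{n+1},\phi_1^{n+1})_{-1}>\frac{M^2}{\Delta t^2}>0,
\end{equation*}
where positivity uses $M,\Delta t,R^n>0$ together with $a>0$. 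Hence the quadratic admits two distinct real roots.

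Next I would determine their signs. Since $b>0$ and $-4ac>0$ force $\sqrt{\Delta}>\sqrt{b^2}=b$, the root $R_1^{n+1}=(-b-\sqrt{\Delta})/(2a)$ has a strictly negative numerator and positive denominator, so $R_1^{n+1}<0$, whereas $R_2^{n+1}=(-b+\sqrt{\Delta})/(2a)>0$ because $-b+\sqrt{\Delta}>0$. Substituting the explicit values of $a$, $b$, $c$ into $R_2^{n+1}$ recovers the claimed formula \eqref{PSSAV-H-1-e11}, and invoking the positivity requirement on the auxiliary variable $R$ selects $R^{n+1}=R_2^{n+1}$, which closes the induction.

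I do not expect any genuine obstacle: the whole argument is a sign analysis of a quadratic whose coefficients have fixed signs, and the only conceptual point---positive-definiteness of $(\cdot,\cdot)_{-1}$---is guaranteed by the well-posedness of the Poisson problem \eqref{PSSAV-H-1-e1} on $L^2_0(\Omega)$ underlying the definition of the $H^{-1}_{per}$ inner product. The proof is thus essentially a verbatim transcription of the $L^2$ case with $(\cdot,\cdot)$ replaced by $(\cdot,\cdot)_{-1}$.
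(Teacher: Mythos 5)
Your proof is correct and follows essentially the same route as the paper, which simply transposes the sign analysis of the quadratic from Theorem \ref{PSSAV-theorem1} with $(\cdot,\cdot)$ replaced by $(\cdot,\cdot)_{-1}$. Your additional verification that $\phi_1^{n+1}$ has zero mean (so that $(\phi_1^{n+1},\phi_1^{n+1})_{-1}$ is well defined and strictly positive for $\phi_1^{n+1}\neq0$) is a detail the paper leaves implicit, and it is a worthwhile inclusion.
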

As for the energy stability, we have the following result easily.
\begin{theorem}\label{PSSAV-theorem6}
Given $R^0>0$, we have $R^n>0$ for all $n>0$, and the first-order PS-AV scheme \eqref{PSSAV-H-1-e6} is unconditionally energy stable in the sense that
\begin{equation*}
R^{n+1}-R^{n}=\displaystyle-\frac{\Delta t}{M}(\frac{\phi^{n+1}-\phi^n}{\Delta t},\frac{\phi^{n+1}-\phi^n}{\Delta t})_{-1}\leq0.
\end{equation*}
\end{theorem}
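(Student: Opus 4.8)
The plan is to mirror the argument behind Theorem \ref{PSSAV-theorem2} for the $L^2$ case, adapting it to the $H^{-1}$ inner product. The statement splits into two parts: the strict positivity $R^n>0$ for all $n$, and the dissipation inequality. Both follow almost immediately once positive semi-definiteness of $(\cdot,\cdot)_{-1}$ is in hand, so I expect this to be short.

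First I would establish positivity by induction on $n$. The base case $R^0=E(\phi_0)+C>0$ holds by the assumption that the free energy is bounded below. For the inductive step, assume $R^n>0$. If $\phi_1^{n+1}=0$ then the scheme yields $R^{n+1}=R^n>0$ directly; otherwise $(\phi_1^{n+1},\phi_1^{n+1})_{-1}>0$ and $R^{n+1}$ is the unique positive root given in Theorem \ref{PSSAV-theorem5}, equation \eqref{PSSAV-H-1-e11}. Its positivity is guaranteed exactly as in Theorem \ref{PSSAV-theorem1}: with $R^n>0$, the quantity under the radical strictly exceeds $M^2/\Delta t^2$, so the numerator is positive while the denominator is positive. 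Hence $R^{n+1}>0$, closing the induction.

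Next I would derive the dissipation law. Multiplying the third equation of the scheme \eqref{PSSAV-H-1-e6} by $\Delta t$ produces the stated equality
$$R^{n+1}-R^n=-\frac{\Delta t}{M}\Bigl(\frac{\phi^{n+1}-\phi^n}{\Delta t},\frac{\phi^{n+1}-\phi^n}{\Delta t}\Bigr)_{-1}.$$
It then remains to show the right-hand side is nonpositive. For this I would invoke the definition \eqref{PSSAV-H-1-e2}, which gives $(f,f)_{-1}=(\nabla\mu_f,\nabla\mu_f)=\|\nabla\mu_f\|^2\geq0$ for any admissible $f$; since $M>0$ and $\Delta t>0$, the whole right-hand side is $\leq0$.

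The only genuine care point, and the single step I would treat explicitly rather than dismiss, is that the $H^{-1}$ inner product is defined only on the mean-zero space $L^2_0(\Omega)$, so I must verify that the increment $\phi^{n+1}-\phi^n$ lies there. This is precisely the discrete mass-conservation property: from the first equation of \eqref{PSSAV-H-1-e6}, $\phi^{n+1}-\phi^n=M\Delta t\,\Delta\mu^{n+1}$, and integrating over $\Omega$ under periodic boundary conditions gives $\int_\Omega(\phi^{n+1}-\phi^n)\,d\mathbf{x}=0$. Hence $(\cdot,\cdot)_{-1}$ applied to the increment is well defined and nonnegative, and no further estimates are needed to conclude.
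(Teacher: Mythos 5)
Your proof is correct and follows essentially the same route the paper intends: positivity of $R^{n+1}$ comes from the unique positive root in Theorem \ref{PSSAV-theorem5} by induction, and the dissipation identity is just the third equation of \eqref{PSSAV-H-1-e6} multiplied by $\Delta t$, with nonnegativity of $(\cdot,\cdot)_{-1}$ from \eqref{PSSAV-H-1-e2}. Your explicit check that $\phi^{n+1}-\phi^n$ has zero mean (so the $H^{-1}$ inner product is well defined) is a detail the paper leaves implicit, but it does not change the argument.
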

\begin{remark}
The first-order PS-SAV scheme \eqref{PSSAV-H-1-e6} also only requires solving one linear equation with constant coefficients as in a semi-implicit scheme with stabilization. In addition, it may add some additional small computation cost to obtain $(\phi_1^{n+1},\phi_1^{n+1})_{-1}$.
\end{remark}
\section{An energy optimization technique}
Noting that the proposed PS-SAV schemes are unconditionally energy stable with a modified energy, we give an energy optimization technique to make the modified energy to be close to the original energy. 

At each time step, after obtaining $R^{n+1}$, we calibrate it by using the following equation:
\begin{equation}\label{energy-opt-e1}
R^{n+1}=\min\{R^n,E(\phi^{n+1})+C\}.
\end{equation}
The above correction technique will not affect the energy dissipation law and the convergence rates.

We take the first-order PS-SAV scheme \eqref{PSSAV-e3} for the $L^2$ gradient flow as an example. 
\begin{theorem}\label{energy-opt-theorem1}
The first-order PS-SAV scheme \eqref{PSSAV-e3} with correction technique \eqref{energy-opt-e1} is unconditionally energy stable in the sense that
\begin{equation}\label{energy-opt-e2}
\mathcal{E}^{n+1}-\mathcal{E}^n\leq0,
\end{equation}
where $\mathcal{E}^n=R^n-C$ is the modified energy.

We further have the following original energy dissipation law:
\begin{equation*}
\aligned
E(\phi^{n+1})\leq E(\phi^{n}),
\endaligned
\end{equation*}
under the condition of $E(\phi^{n+1})+C\leq R^n$. Here $E(\phi^{n})=\frac{\epsilon^2}{2}(A\phi^n,\phi^n)+(F(\phi^n),1)$ is the original energy.
\end{theorem}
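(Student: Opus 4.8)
The plan is to handle the two assertions separately, because the modified-energy inequality is immediate from the definition of the correction \eqref{energy-opt-e1}, whereas the original-energy dissipation requires chaining two inequalities together through an invariant that the correction enforces at every step. Throughout I would read \eqref{energy-opt-e1} as overwriting the value $R^{n+1}$ produced by Theorem \ref{PSSAV-theorem1}, so that the $R^{n+1}$ appearing in $\mathcal{E}^{n+1}$ and in the next time step is always the corrected one.

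For the first claim I would simply note from \eqref{energy-opt-e1} that the corrected value satisfies $R^{n+1}=\min\{R^n,E(\phi^{n+1})+C\}\le R^n$, whence
\[
\mathcal{E}^{n+1}-\mathcal{E}^n=(R^{n+1}-C)-(R^n-C)=R^{n+1}-R^n\le0,
\]
which is exactly \eqref{energy-opt-e2}. At the same time I would record that the correction preserves positivity: since $E(\phi^{n+1})+C>0$ by the boundedness-from-below assumption and $R^n>0$ by induction (as in Theorem \ref{PSSAV-theorem2}), the minimum of two positive numbers is again positive, so $R^{n+1}>0$ and the scheme can legitimately be advanced.

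For the second claim the key object I would isolate first is the invariant $R^m\le E(\phi^m)+C$ valid for all $m\ge0$. This follows directly from \eqref{energy-opt-e1}: for $m\ge1$ the corrected value $R^m=\min\{R^{m-1},E(\phi^m)+C\}$ is by definition no larger than its second argument, and for $m=0$ one has equality since $R^0=E(\phi_0)+C$. With this in hand, under the stated hypothesis $E(\phi^{n+1})+C\le R^n$ the minimum in \eqref{energy-opt-e1} is attained at its second argument, so $R^{n+1}=E(\phi^{n+1})+C$. I would then chain
\[
E(\phi^{n+1})+C=R^{n+1}\le R^n\le E(\phi^n)+C,
\]
where the middle inequality is the modified-energy dissipation just established (equivalently, the hypothesis itself) and the last inequality is the invariant at index $n$; cancelling $C$ yields $E(\phi^{n+1})\le E(\phi^n)$.

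I do not anticipate a genuine obstacle: the whole argument is bookkeeping with the definition of the minimum. The only point deserving care is recognizing that the correction is to be understood as applied at \emph{every} time step, so that the invariant $R^n\le E(\phi^n)+C$ is actually available at index $n$ when bounding the original energy at index $n+1$. The more delicate assertion accompanying this theorem in the text — that the correction leaves the convergence rate of Theorem \ref{err-estimate} intact — is not part of the present statement; were it required, I would argue that the corrected value differs from $R(t^{n+1})=E(\phi(t^{n+1}))+C$ only through the already-controlled quantity $e_R^{n+1}$ and the $O(\Delta t)$ consistency error of $E(\phi^{n+1})+C$, so that the $|e_R^{k+1}|$ term in the error bound is unaffected.
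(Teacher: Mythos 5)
Your proposal is correct and follows essentially the same route as the paper: the modified-energy inequality comes directly from $R^{n+1}=\min\{R^n,E(\phi^{n+1})+C\}\le R^n$, the invariant $R^m\le E(\phi^m)+C$ (equivalently $\mathcal{E}^m\le E(\phi^m)$) is the paper's inequality \eqref{energy-opt-e3}, and the final chain $E(\phi^{n+1})=\mathcal{E}^{n+1}\le\mathcal{E}^n\le E(\phi^n)$ under the hypothesis is exactly the paper's concluding step \eqref{energy-opt-e5}. Your added remarks on positivity preservation and on where the correction enters the error analysis go slightly beyond what the paper records but do not change the argument.
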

\begin{proof}
From the correction equation \eqref{energy-opt-e1}, we get $R^{n+1}\leq R^n$, then one can immediately obtain
\begin{equation*}
\mathcal{E}^{n+1}-\mathcal{E}^n\leq0,
\end{equation*}

We can also obtain $R^{n+1}\leq E(\phi^{n+1})+C$ from \eqref{energy-opt-e1}, then the following inequality is satisfied:
\begin{equation*}
\mathcal{E}^{n+1}=R^{n+1}-C\leq E(\phi^{n+1}),
\end{equation*}
which means 
\begin{equation}\label{energy-opt-e3}
\mathcal{E}(\phi^{n})\leq E(\phi^{n}), \quad \forall n\geq0.
\end{equation}

If $E(\phi^{n+1})+C\leq R^n$, we get $R^{n+1}=\min\{R^n,E(\phi^{n+1})+C\}=E(\phi^{n+1})+C$, then the following equation will hold:
\begin{equation}\label{energy-opt-e4}
\aligned
\mathcal{E}(\phi^{n+1})=R^{n+1}-C=E(\phi^{n+1}).
\endaligned
\end{equation}
Combining the inequality \eqref{energy-opt-e2} with \eqref{energy-opt-e2} and \eqref{energy-opt-e4}, we obtain
\begin{equation}\label{energy-opt-e5}
\aligned
E(\phi^{n+1})=\mathcal{E}(\phi^{n+1})\leq\mathcal{E}(\phi^{n})\leq E(\phi^{n}),
\endaligned
\end{equation}
which means the first-order PS-SAV scheme \eqref{PSSAV-e3} with correction technique \eqref{energy-opt-e1} is unconditionally energy stable with original energy under the condition of $E(\phi^{n+1})+C\leq R^n$.
\end{proof}

\section{Examples and discussion}
In this section, we consider some numerical examples to illustrate the simplicity and efficiency of our proposed method. In all considered examples, we consider the periodic boundary conditions and use a Fourier spectral method in space.

\begin{example}\label{ex:AC}
	\rm
	The following Allen-Cahn equation is under our consideration,
	\begin{equation}\label{eq:Allen-Cahn}
		\frac{\partial \phi}{\partial t}=M\left(\alpha_{0} \Delta \phi+\left(1-\phi^{2}\right) \phi\right),
	\end{equation}
	subject to periodic boundary conditions.
	
	{\em Case A.} We give the exact solution
	\begin{equation} \label{eq:AC-CH-exact-solution-example}
		\phi(x, y, t)=\exp (\sin (\pi x) \sin (\pi y)) \sin (t),
	\end{equation}
	by introducing an external force $f$ into \eqref{eq:Allen-Cahn} in the domain $\Omega=(0, 2)^{2}$. 
	We set the values of the parameters $M$ and $\alpha_{0}$ to $1$ and $0.01^2$, respectively. 
	To ensure that the spatial discretization error is much smaller than the time discretization error, we adopt $128^2$ Fourier modes for space discretization. 
	
	In Table \ref{table:AC-1st} and Table \ref{table:AC-CN}, we present the $L^2$-norm error convergence rate for SAV, GSAV and PS-SAV approaches at $T = 1$ obtained using first-order and Crank-Nicolson scheme, respectively. 
	We have observed that the expected convergence rates are achieved for all cases. Furthermore, in this example, the errors for the first-order scheme satisfy the following order: SAV $\approx$ PS-SAV $<$ GSAV. On the other hand, for the Crank-Nicolson scheme, the errors follow the order: PS-SAV $\leq$ SAV $<$ GSAV. These findings indicate that the proposed PS-SAV method performs equally or slightly better than the SAV method, with these two methods slightly outperforming the GSAV method in terms of error reduction. 
	
	\begin{table}[!h] 
		\centering
		\caption{Example \ref{ex:AC} Convergence test for Allen-Cahn equation using the first-order scheme by different approaches.}
		\label{table:AC-1st}
			\begin{tabular}{||c||cc||cc||cc||}
				\hline
				& \multicolumn{2}{c||}{SAV} & \multicolumn{2}{c||}{GSAV} & \multicolumn{2}{c||}{PS-SAV} \\
				\hline
				$\Delta t$ & $\|e_{\phi}\|_{L^{2}}$ & Rate & $\|e_{\phi}\|_{L^{2}}$ & Rate & $\|e_{\phi}\|_{L^{2}}$ & Rate \\
				\hline
				1.00E-2  & 1.19E-02 &  --    & 2.53E-02   & --    & 1.35E-02 &  --   \\
				5.00E-3  & 5.93E-03 &  1.01  & 1.21E-02   & 1.06  & 6.74E-03 &  1.00 \\
				2.50E-3  & 2.96E-03 &  1.00  & 5.94E-03   & 1.03  & 3.37E-03 &  1.00 \\
				1.25E-3  & 1.48E-03 &  1.00  & 2.94E-03   & 1.01  & 1.68E-03 &  1.00 \\
				6.25E-4  & 7.38E-04 &  1.00  & 1.46E-03   & 1.01  & 8.41E-04 &  1.00 \\
				\hline
			\end{tabular}
		\end{table}
	
		\begin{table}[!h] 
		\centering
		\caption{Example \ref{ex:AC} Convergence test for Allen-Cahn equation using the Crank-Nicolson scheme by different approaches.}
		\label{table:AC-CN}
			\begin{tabular}{||c||cc||cc||cc||}
				\hline
				& \multicolumn{2}{c||}{SAV} & \multicolumn{2}{c||}{GSAV} & \multicolumn{2}{c||}{PS-SAV} \\
				\hline
				$\Delta t$ & $\|e_{\phi}\|_{L^{2}}$ & Rate & $\|e_{\phi}\|_{L^{2}}$ & Rate & $\|e_{\phi}\|_{L^{2}}$ & Rate \\
				\hline
				1.00E-2  & 5.48E-05 &  --    & 8.23E-04   & --    & 5.47E-05 &  --   \\
				5.00E-3  & 1.37E-05 &  2.00  & 1.98E-04   & 2.05  & 1.37E-05 &  2.00 \\
				2.50E-3  & 3.44E-06 &  2.00  & 4.88E-05   & 2.03  & 3.43E-06 &  2.00 \\
				1.25E-3  & 8.61E-07 &  2.00  & 1.21E-05   & 2.01  & 8.59E-07 &  2.00 \\
				6.25E-4  & 2.15E-07 &  2.00  & 3.01E-06   & 2.01  & 2.15E-07 &  2.00 \\
				\hline
			\end{tabular}
		\end{table}
	
	{\em Case B.}  We choose the initial condition as            
	\begin{equation}
		\begin{aligned}
			& \phi(x, y)=\tanh \frac{1.5+1.2 \cos (6 \theta)-2 \pi r}{\sqrt{2\alpha}}, \\ 
			& \theta=\arctan \frac{y-0.5 L_{y}}{x-0.5 L_{x}}, \quad r=\sqrt{\left(x-\frac{L_{x}}{2}\right)^{2}+\left(y-\frac{L_{y}}{2}\right)^{2}},
		\end{aligned}
	\end{equation}
		where $(\theta,r)$ are the polar coordinates of $(x, y)$. 
	We set $\Omega=[0, L_x]\times[0, L_{y}]$ with $L_x=L_y=1$ and the other parameters are $\alpha_{0}=0.01^2, M=1$ and $128^2$ Fourier modes. 
	We use the results of the semi-implicit/first-order scheme with $\Delta t = 1E-5$ as the reference solution. 
	The $L^{2}$-norm error of four schemes at $T=200$ with different time steps are shown in Table \ref{table:comparison-dt-schemes}. 
	In this particular case, we observed that the errors of the SAV, GSAV, and PS-SAV approaches follow the order: PS-SAV $<$ SAV $<$ GSAV. However, upon applying the energy optimization technique, the error of R-PS-SAV approach is slightly larger than that of PS-SAV approach, but still smaller than the errors of SAV and GSAV approaches. 
	In Fig.\,\ref{Fig:AC-star-shape-energy}, we provide a comparison of the energy (first), energy error (second), and error of $\xi^{n+1}$ (third) for the SAV, GSAV, PS-SAV, and R-PS-SAV approaches. These results are obtained using the first-order scheme with a time step size of $\Delta t = 1E-3$. 
	We can observe that for the majority of the time, the error in modified energy and the error in $\xi^{n+1}$ follow the following order: R-PS-SAV $<$ SAV $<$ PS-SAV $<$ GSAV.
	
	\begin{table}[htbp] 
		\centering
		\caption{Example \ref{ex:AC} (Case B). A comparison of  $L^2$-error obtained by four approaches based on first-order scheme for Allen-Cahn equation at $T=200$ with various time steps.}
		\label{table:comparison-dt-schemes}
		\begin{tabular}{||c||c||c||c||c||}
			\hline
			$\Delta t$ & SAV & GSAV & PS-SAV & R-PS-SAV \\
			\hline
			1.00E-1  & 1.75E-03   & 3.31E-03   & 4.88E-04  & 9.31E-04 \\
			5.00E-2  & 8.77E-04   & 1.83E-03   & 2.48E-04  & 4.66E-04 \\
			1.00E-2  & 1.76E-04   & 4.07E-04   & 5.03E-05  & 9.32E-05 \\
			5.00E-3  & 8.77E-05   & 2.07E-04   & 2.51E-05  & 4.65E-05 \\
			1.00E-3  & 1.74E-05   & 4.18E-05   & 5.04E-06  & 9.16E-06 \\
			\hline
		\end{tabular}
	\end{table}	
	
	\begin{figure}[htbp]
		\centering
		\begin{minipage}{0.32\textwidth}
			\centering
			\includegraphics[width=5.3cm]{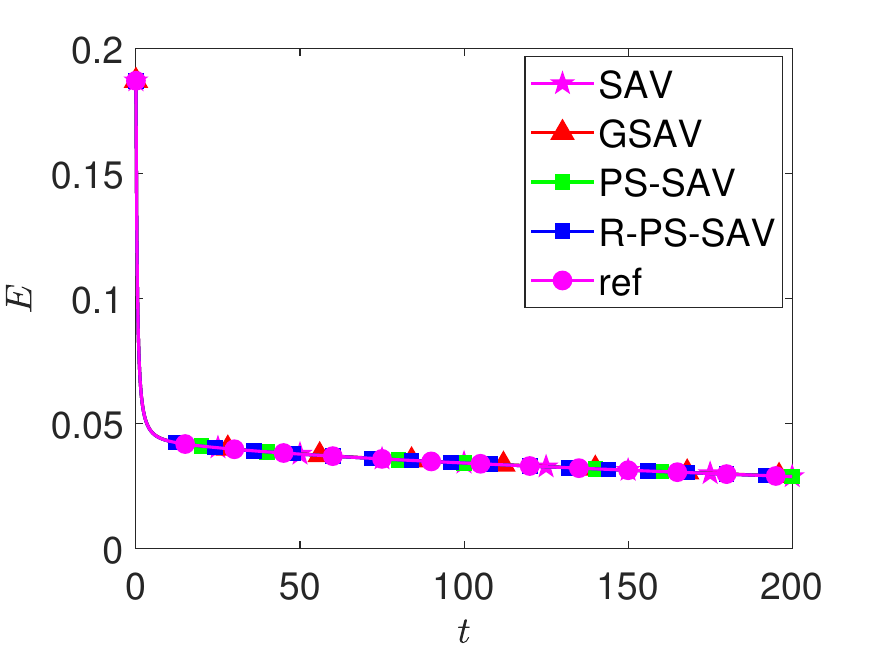}
		\end{minipage}
		\begin{minipage}{0.32\textwidth}
			\centering
			\includegraphics[width=5.3cm]{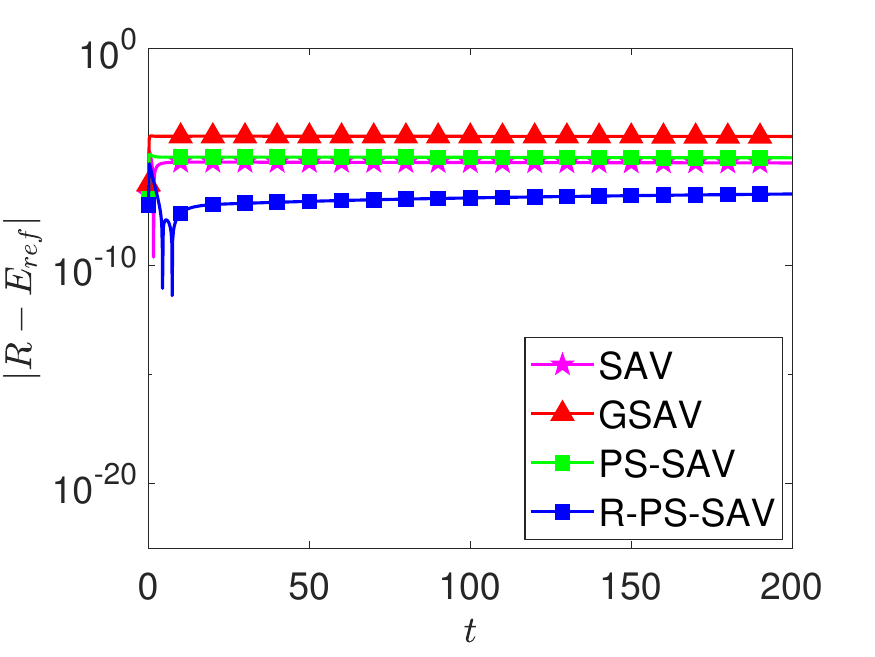} 
		\end{minipage}
		\begin{minipage}{0.32\textwidth}
			\centering
			\includegraphics[width=5.3cm]{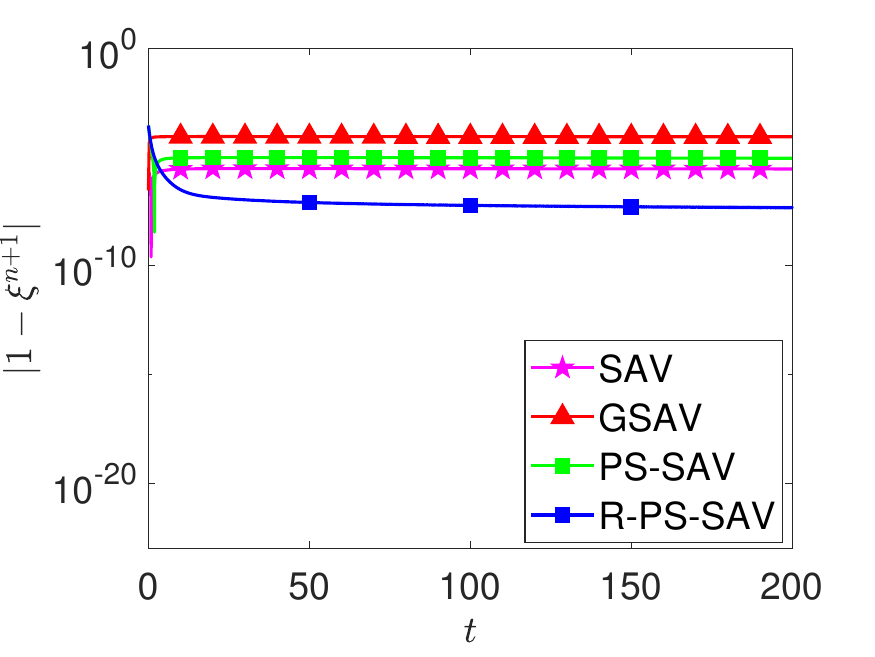}
		\end{minipage}
		\caption{Example \ref{ex:AC} (Case B). Allen-Cahn equation: a comparison of energy (first), errors of energy (second) and errors of $\xi^{n+1}$ (third) obtained by four approaches with $\Delta t = 1E-3$ based on first-order scheme.}
		\label{Fig:AC-star-shape-energy}
	\end{figure}  
\end{example}

\begin{example}\label{ex:CH}
	\rm
	We consider Cahn-Hilliard equation
	\begin{equation}\label{eq:Cahn-Hilliard}
		\frac{\partial \phi}{\partial t}=-M \Delta\left(\alpha_0 \Delta \phi+\frac{1}{\epsilon^2}\left(1-\phi^{2}\right) \phi\right).
	\end{equation}
	
	{\em Case A.} We give the exact solution
	\begin{equation} 
		\phi(x, y, t)=\cos (\pi x) \cos (\pi y) \sin (t),
	\end{equation}
	by introducing an external force $f$ into \eqref{eq:Cahn-Hilliard} in the domain $\Omega=(0, 2)^{2}$. 
	We set the values of the parameters $\alpha_0=0.04$, $M=0.005$, and $\epsilon=1$. 
	To ensure that the spatial discretization error is much smaller than the time discretization error, we adopt $128^2$ Fourier modes for space discretization. 
	
	In Table \ref{table:CH-1st} and Table \ref{table:CH-CN}, we present the $L^2$-norm error convergence rate for SAV, GSAV and PS-SAV approaches at $T = 1$ obtained using first-order and Crank-Nicolson scheme, respectively. We can observed that the expected convergence rates are obtained for all cases. 
	
		\begin{table}[!h] 
		\centering
		\caption{Example \ref{ex:CH} (Case A). Convergence test for Cahn-Hilliard equation using the first-order scheme by different approaches.}
		\label{table:CH-1st}
			\begin{tabular}{||c||cc||cc||cc||}
				\hline
				& \multicolumn{2}{c||}{SAV} & \multicolumn{2}{c||}{GSAV} & \multicolumn{2}{c||}{PS-SAV} \\
				\hline
				$\Delta t$ & $\|e_{\phi}\|_{L^{2}}$ & Rate & $\|e_{\phi}\|_{L^{2}}$ & Rate & $\|e_{\phi}\|_{L^{2}}$ & Rate \\
				\hline
				1.00E-2  & 2.85E-03 &  --    & 2.83E-03   & --    & 2.24E-03 &  --   \\
				5.00E-3  & 1.42E-03 &  1.00  & 1.41E-03   & 1.00  & 1.12E-03 &  1.00 \\
				2.50E-3  & 7.12E-04 &  1.00  & 7.06E-04   & 1.00  & 5.58E-04 &  1.00 \\
				1.25E-3  & 3.56E-04 &  1.00  & 3.53E-04   & 1.00  & 2.79E-04 &  1.00 \\
				6.25E-4  & 1.78E-04 &  1.00  & 1.76E-04   & 1.00  & 1.39E-04 &  1.00 \\
				\hline
			\end{tabular}
		\end{table}
		
		\begin{table}[!h] 
			\centering
			\caption{Example \ref{ex:CH} (Case A). Convergence test for Chan-Hilliard equation using the Crank-Nicolson scheme by different approaches.}
			\label{table:CH-CN}
				\begin{tabular}{||c||cc||cc||cc||}
					\hline
					& \multicolumn{2}{c||}{SAV} & \multicolumn{2}{c||}{GSAV} & \multicolumn{2}{c||}{PS-SAV} \\
					\hline
					$\Delta t$ & $\|e_{\phi}\|_{L^{2}}$ & Rate & $\|e_{\phi}\|_{L^{2}}$ & Rate & $\|e_{\phi}\|_{L^{2}}$ & Rate \\
					\hline
					1.00E-2  & 4.96E-06 &  --    & 4.89E-06   & --    & 3.94E-06 &  --   \\
					5.00E-3  & 1.25E-06 &  1.99  & 1.23E-06   & 1.99  & 9.91E-07 &  1.99 \\
					2.50E-3  & 3.12E-07 &  2.00  & 3.08E-07   & 2.00  & 2.48E-07 &  2.00 \\
					1.25E-3  & 7.82E-08 &  2.00  & 7.71E-08   & 2.00  & 6.22E-08 &  2.00 \\
					6.25E-4  & 1.96E-08 &  2.00  & 1.93E-08   & 2.00  & 1.56E-08 &  2.00 \\
					\hline
				\end{tabular}
			\end{table}

	{\em Case B.}  As the initial condition, we consider a rectangular arrangement of $19 \times 19$ circles 
	\begin{equation}
		\phi_{0}(\boldsymbol{x}, t)=360-\sum_{m=1}^{19} \sum_{n=1}^{19} \tanh\left(\frac{\sqrt{\left(x-x_{m}\right)^{2}+\left(y-y_{n}\right)^{2}}-r_{0}}{\sqrt{2} \epsilon}\right),
	\end{equation}
	where $r_0 = 0.085, x_{m} = 0.2\times m, y_{n} = 0.2\times n$ for $m, n = 1, 2, \cdots, 19$. 
	For our simulations, we use a computational domain of $[0,4]^2$. The parameters $M$, $\alpha_0$, and $\epsilon$ are set to $1E-6$, $1.6032$, and $0.0079$, respectively. We adopt a spatial discretization scheme using $512^2$ Fourier modes. 
	The PS-SAV approach proposed in this study guarantee the unconditional positivity of the computed $R(t)$ values, regardless of the time step size. Fig.\,\ref{Fig:CH-r-R-xi} first and second subfigures illustrate the time history of the auxiliary variable $r(t)$ computed using the SAV and the auxiliary variable $R(t)$ obtained by the PS-SAV approach, both with a time step size of $\Delta t = 0.5$.
	In the PS-SAV approach, $R(t)$ is computed using a dynamic equation derived from the relation $R(t) = E(\phi)+C>0$, ensuring the positivity of $R(t)$. On the other hand, in the SAV method, the auxiliary variable $r(t)$ is computed using a dynamic equation based on the relation $r(t) = \sqrt{E_1(\phi)+C}$. However, SAV lacks the property of guaranteeing the positivity of the auxiliary variable, and as shown in first subfigure of Fig.\,\ref{Fig:CH-r-R-xi}, the computed $r(t)$ values can take negative values.  
	The first two subfigures of Fig.\,\ref{Fig:CH-circles-comparison} show the snapshots of field function at $T=100$ using SAV and PS-SAV approaches with Euler scheme and a time step size $\Delta t =0.1$. The discrepancy between the two results suggests that the PS-SAV approach yields more accurate results compared to the SAV approach. The last two subfigures of Fig.\,\ref{Fig:CH-circles-comparison} show the snapshots of field function at $T=100$ using SAV and PS-SAV approaches with Euler scheme and a time step size $\Delta t =1E-3$. The results obtained from both figures are consistent with each other.
	
	\begin{figure}[htbp]
		\centering
		\begin{minipage}{0.32\textwidth}
			\centering
			\includegraphics[width=5.3cm]{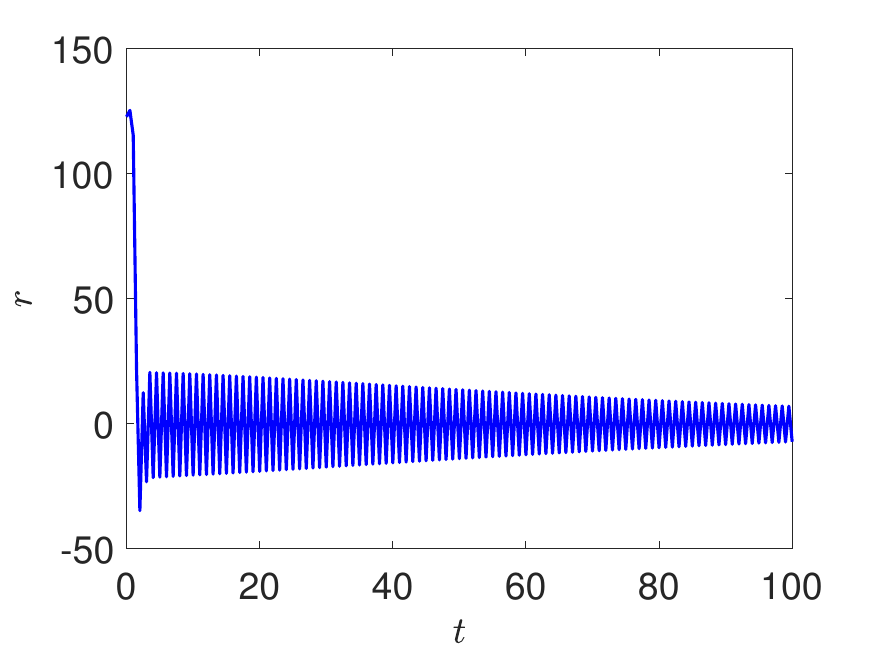}
		\end{minipage}
		\begin{minipage}{0.32\textwidth}
			\centering
			\includegraphics[width=5.3cm]{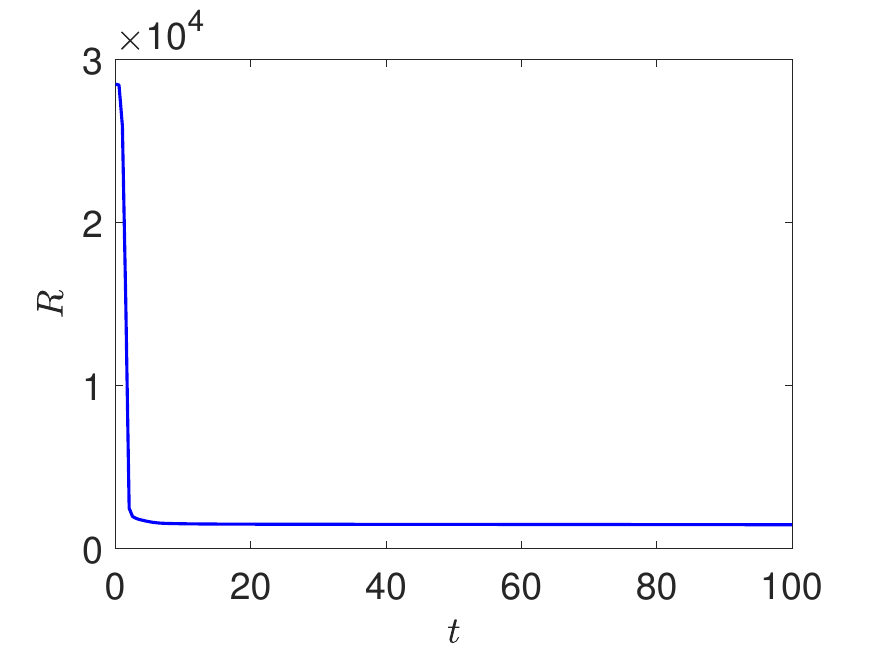}
		\end{minipage}
		\begin{minipage}{0.32\textwidth}
			\centering
			\includegraphics[width=5.3cm]{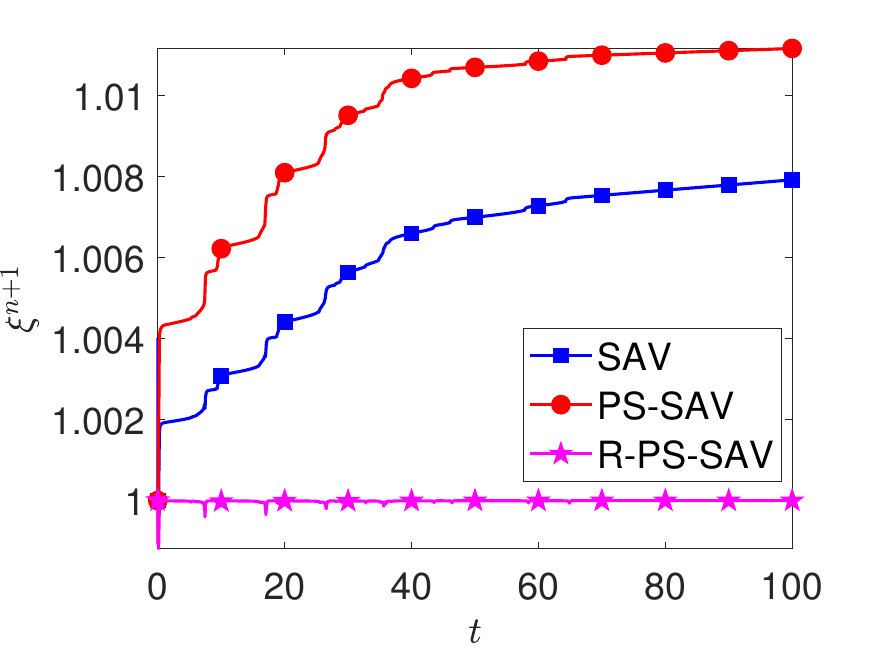}
		\end{minipage}
			\caption{Example \ref{ex:CH} (Case B). The history of $r$ obtained by SAV/Crank-Nicolson scheme (first) and the history of $R$ obtained by PS-SAV/Crank-Nicolson scheme (second) with $\Delta t = 0.5$. Third subfigure is the history of $\xi$ for three schemes with $\Delta t = 1E-3$.}
		\label{Fig:CH-r-R-xi}
	\end{figure} 

	\begin{figure}[htbp]
		\centering
		\begin{minipage}{0.24\textwidth}
			\centering
			\includegraphics[width=5.0cm]{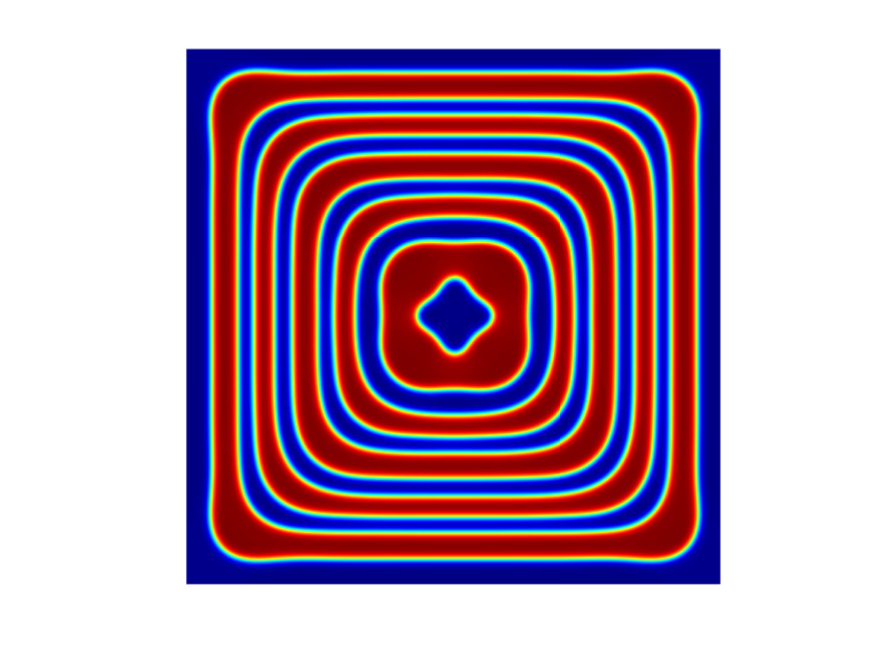}
		\end{minipage}
		\begin{minipage}{0.24\textwidth}
			\centering
			\includegraphics[width=5.0cm]{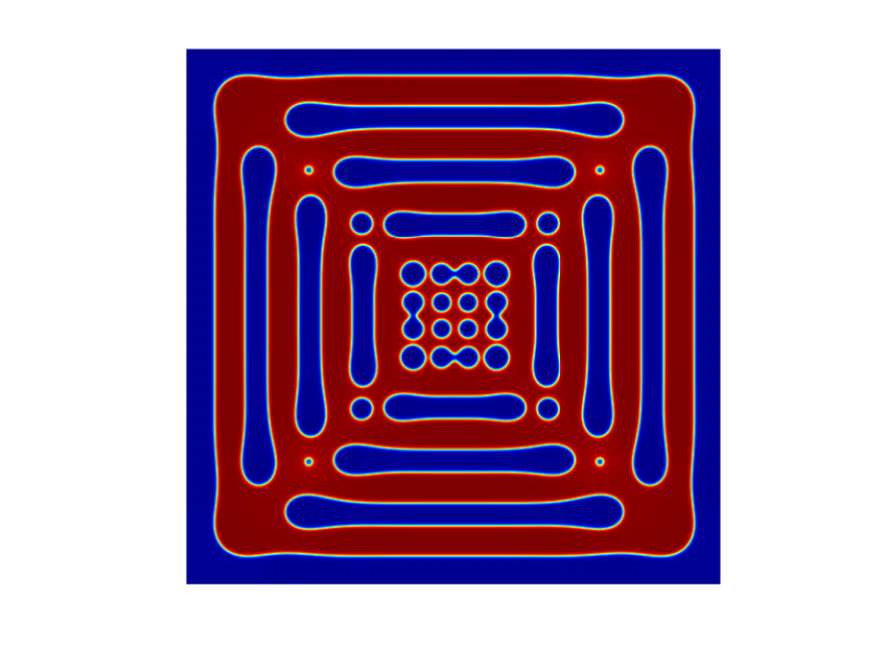}
		\end{minipage}
		\begin{minipage}{0.24\textwidth}
			\centering
			\includegraphics[width=5.0cm]{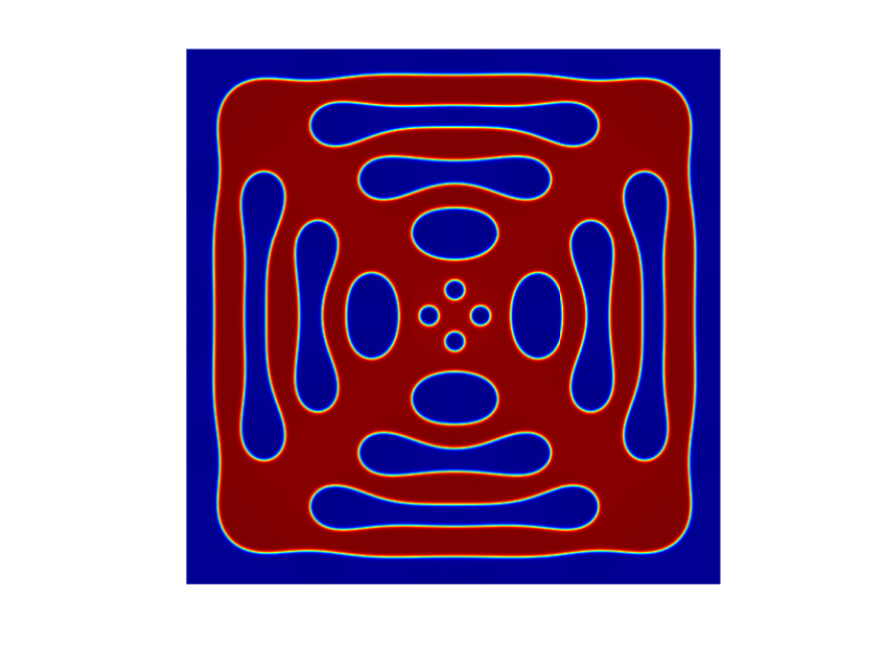}
		\end{minipage}
		\begin{minipage}{0.24\textwidth}
			\centering
			\includegraphics[width=5.0cm]{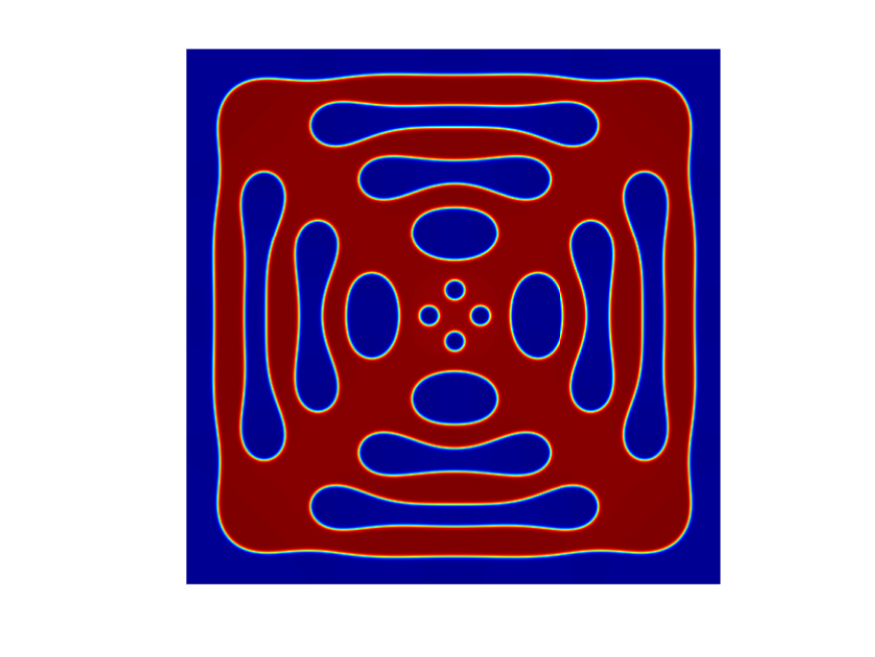}
		\end{minipage}
		\caption{Example \ref{ex:CH} (Case B). The snapshots of the field function at $T=100$ computed using different schemes and time step sizes: SAV/first-order scheme with $\Delta t =0.1$ (first); PS-SAV/first-order scheme with $\Delta t =0.1$ (second); SAV/first-order scheme with $\Delta t =0.001$ (third); PS-SAV/first-order scheme with $\Delta t =0.001$ (fourth).}
		\label{Fig:CH-circles-comparison}
	\end{figure} 
	
\end{example}

\begin{example}\label{ex:MBE}
	\rm 
	We consider the thin film epitaxy growth model. Let $\phi(\mathbf{x)}: \Omega \rightarrow \mathbf{R}$ represents the height of the thin film. The total free energy can be expressed as:
	
	\begin{equation}
		E(\phi)=\int_{\Omega}\left(F(\nabla \phi)+\frac{\epsilon^{2}}{2}(\Delta \phi)^{2}\right) d \mathbf{x}.
	\end{equation}
	
	Here, $F(\mathbf{y})$ is a smooth function, and $\epsilon$ is the gradient energy coefficient. The first term $\int_{\Omega} F(\nabla \phi) d \mathbf{x}$ represents a continuum description of the Ehrlich-Schwoedel effect, while the second term $\int_{\Omega} \frac{\epsilon^{2}}{2}(\Delta \phi)^{2} d \mathbf{x}$ represents the surface diffusion effect.

Two common choices for the nonlinear potential $F(\nabla \phi)$ are frequently employed.

(i) Double well potential for the model with slope selection:
\begin{equation*}
	F(\nabla \phi)=\frac{1}{4}\left(|\nabla \phi|^{2}-1\right)^{2}.
\end{equation*}

(ii) Logarithmic potential for the model without slope selection:
\begin{equation*}
	F(\nabla \phi)=-\frac{1}{2} \ln \left(1+|\nabla \phi|^2\right).
\end{equation*}

The evolution equation governing the height function $\phi$ is governed by the gradient flow, given by:
\begin{equation}\label{eq:MBE}
	\phi_t=-M\left(\epsilon^2 \Delta^2 \phi+f(\nabla \phi)\right),
\end{equation}
where $M$ is the mobility constant, and 
\begin{equation*}
	f(\nabla \phi)=-\nabla \cdot F^{\prime}(\nabla \phi)= \begin{cases}\nabla \cdot\left(\left(1-|\nabla \phi|^2\right) \nabla \phi\right), & \text { Model with slope selection, } \\ \nabla \cdot\left(\frac{\nabla \phi}{1+|\nabla \phi|^2}\right), & \text { Model without slope selection. }\end{cases}
\end{equation*}
The energy dissipation property for the aforementioned two models can be obtained by taking the $L^2$ inner product of \eqref{eq:MBE} with $\phi_{t}$ and applying integration by parts
\begin{equation*}
	\frac{d}{d t} E(\phi)=-\frac{1}{M}\left\|\phi_t\right\|^2 \leq 0.
\end{equation*}

To simulate the coarsening dynamics, we select a random initial condition ranging from $-0.001$ to $0.001$. The parameters are as follows:
\begin{equation*}
	\epsilon=0.03, M=1.
\end{equation*}
The computational domain is $\Omega=[0, 12.8)^2$, and we utilize $512^2$ Fourier modes for spatial discretization. In Fig.\,\ref{Fig:MBE-with-slope-2D-PS-SAV} and Fig.\,\ref{Fig:MBE-without-slope-2D-PS-SAV}, snapshots of the numerical solutions for the height function $\phi$ and its Laplacian $\Delta \phi$ at different times are presented for both models, respectively.

In the left subplot of Fig.\,\ref{Fig:MBE-energy}, the evolution of energy for the model with slope selection is plotted. It can be observed that the energy decays following a $t^{-\frac{1}{3}}$ trend. In the right subplot of Fig.\,\ref{Fig:MBE-energy}, the evolution of energy for the model without slope selection is depicted. It is notable that the energy decays logarithmically with respect to $-\log_{10}(t)$. These results are consistent with the findings reported in \cite{cheng2019highly}.

		\begin{figure}[htbp]
			\centering
			\subfigure[$t = 0$]{
			\begin{minipage}{0.23\textwidth}
				\centering
				\includegraphics[width=4.5cm]{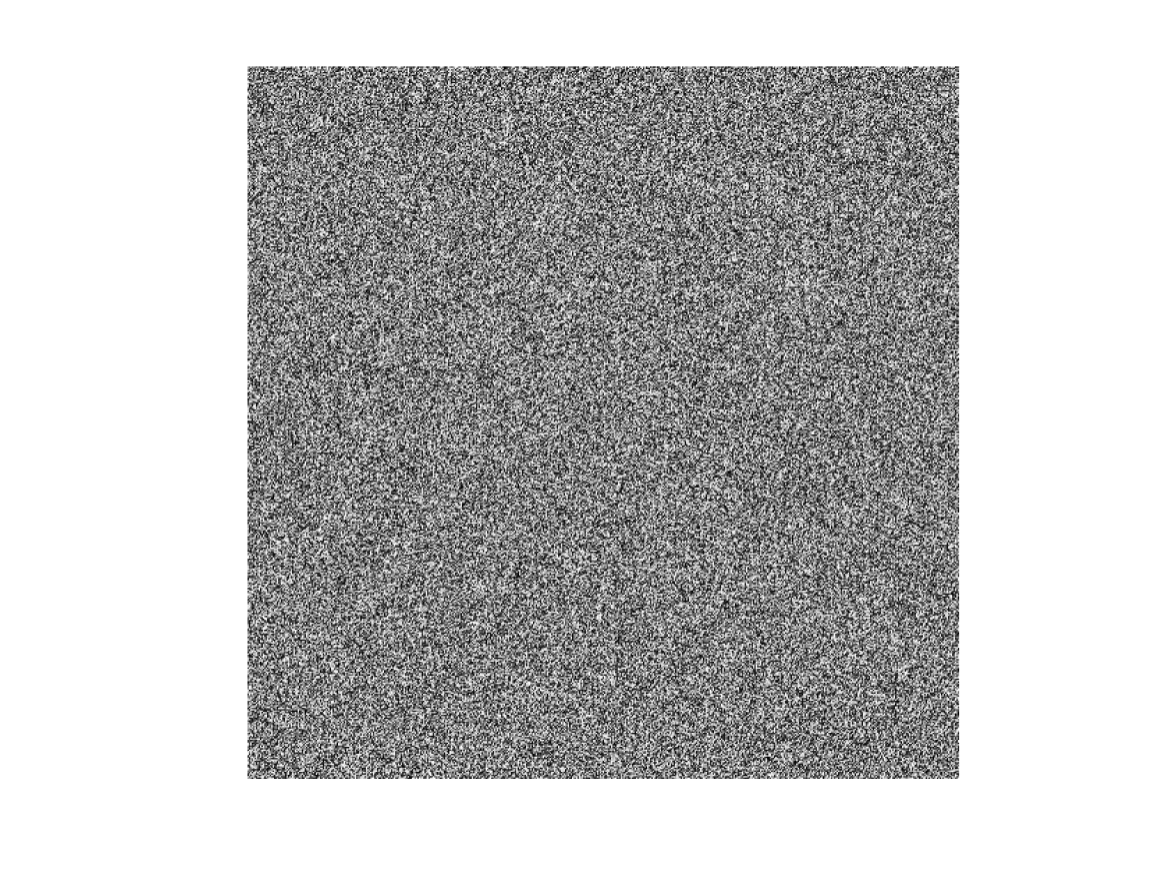}
			\end{minipage}
			\begin{minipage}{0.23\textwidth}
				\centering
				\includegraphics[width=4.5cm]{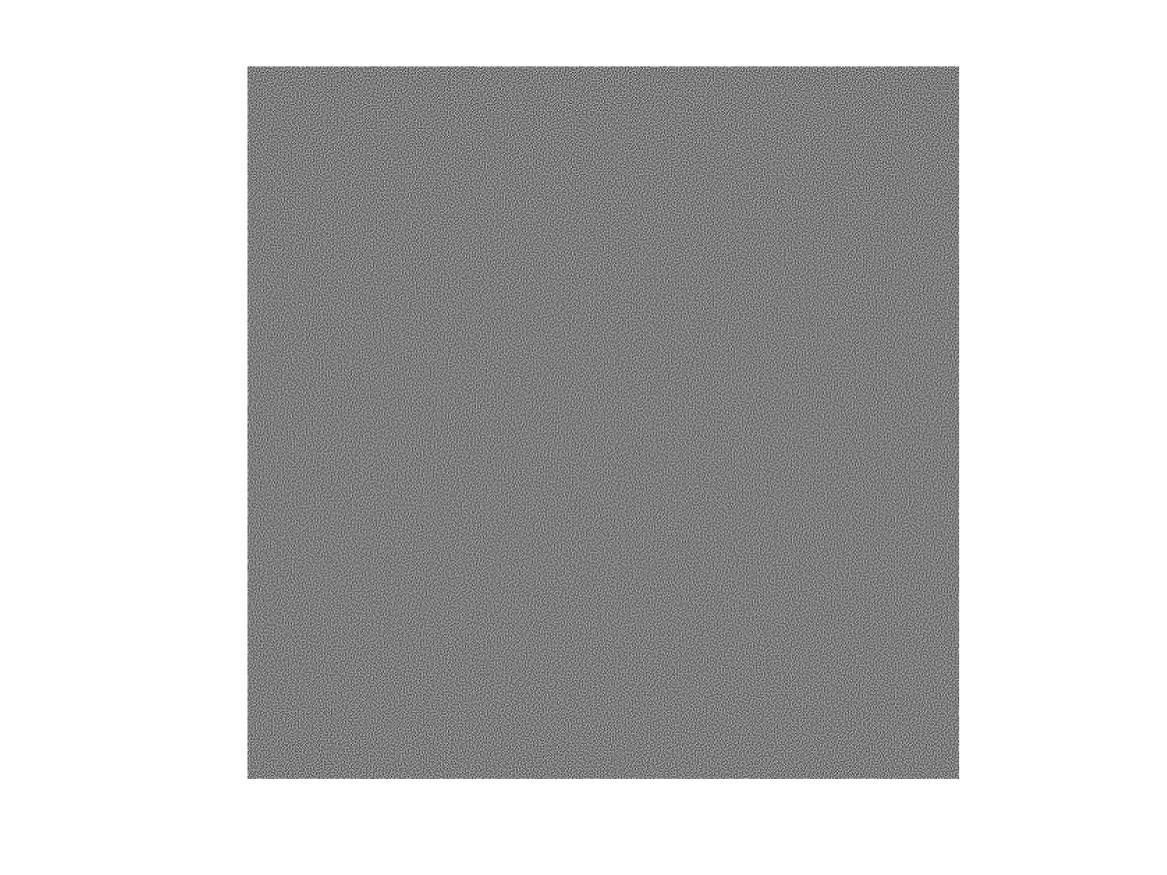}
			\end{minipage}
		}
	\subfigure[$t = 1$]{
			\begin{minipage}{0.23\textwidth}
				\centering
				\includegraphics[width=4.5cm]{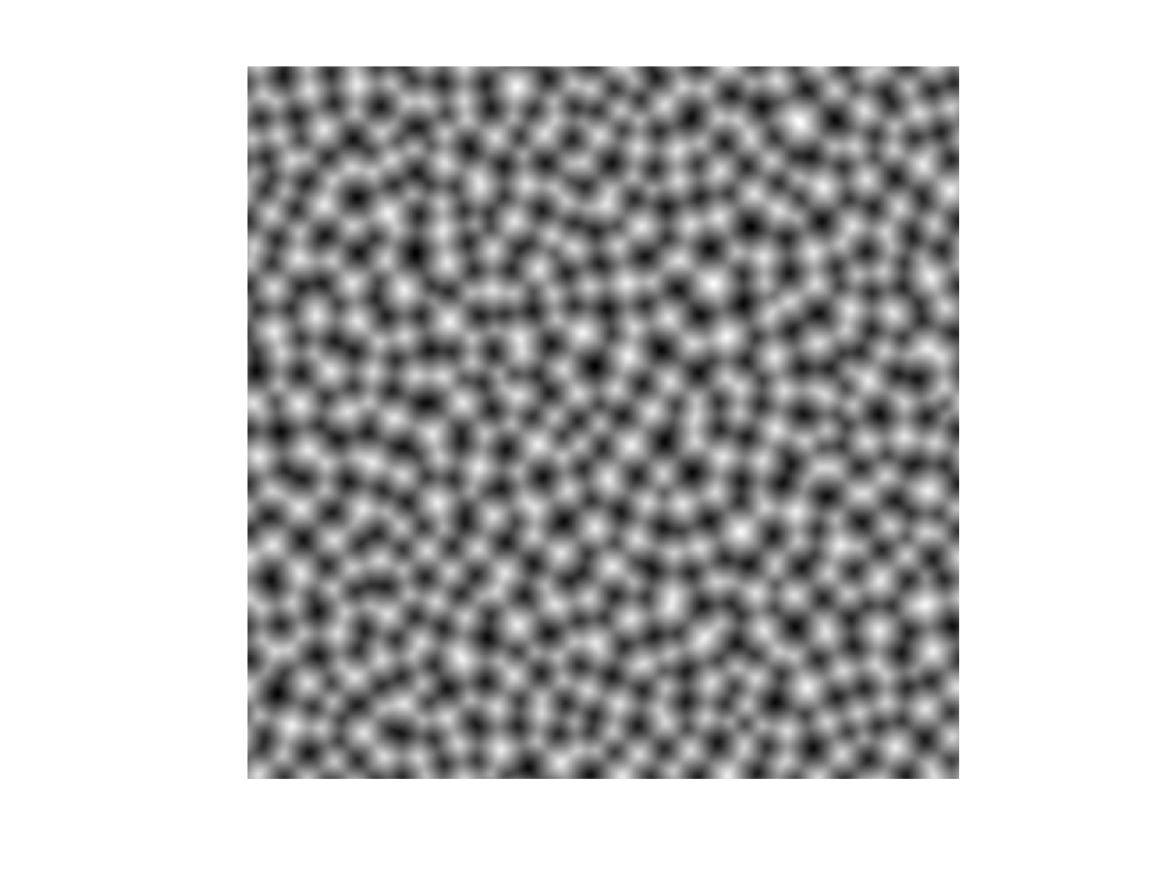}
			\end{minipage}
			\begin{minipage}{0.23\textwidth}
				\centering
				\includegraphics[width=4.5cm]{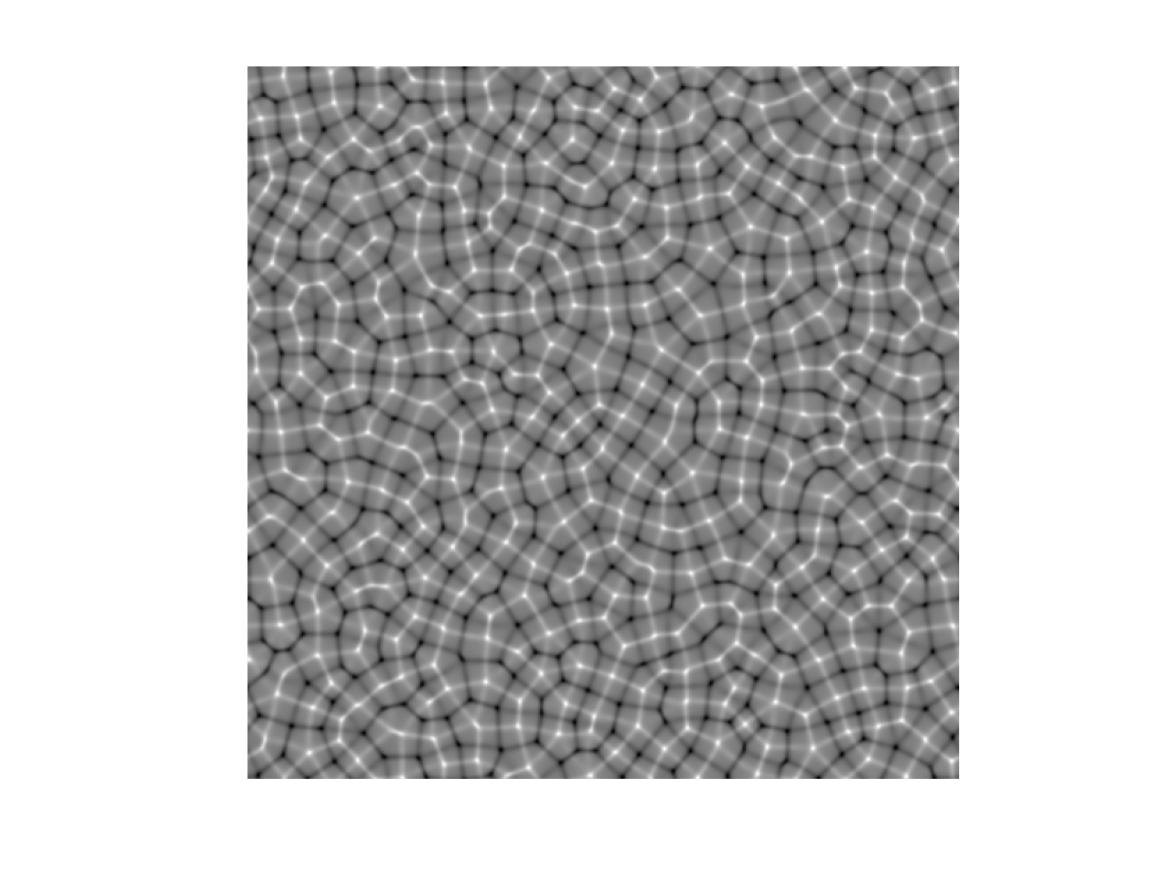}
			\end{minipage}
		}
		\subfigure[$t =10$]{
			\begin{minipage}{0.23\textwidth}
				\centering
				\includegraphics[width=4.5cm]{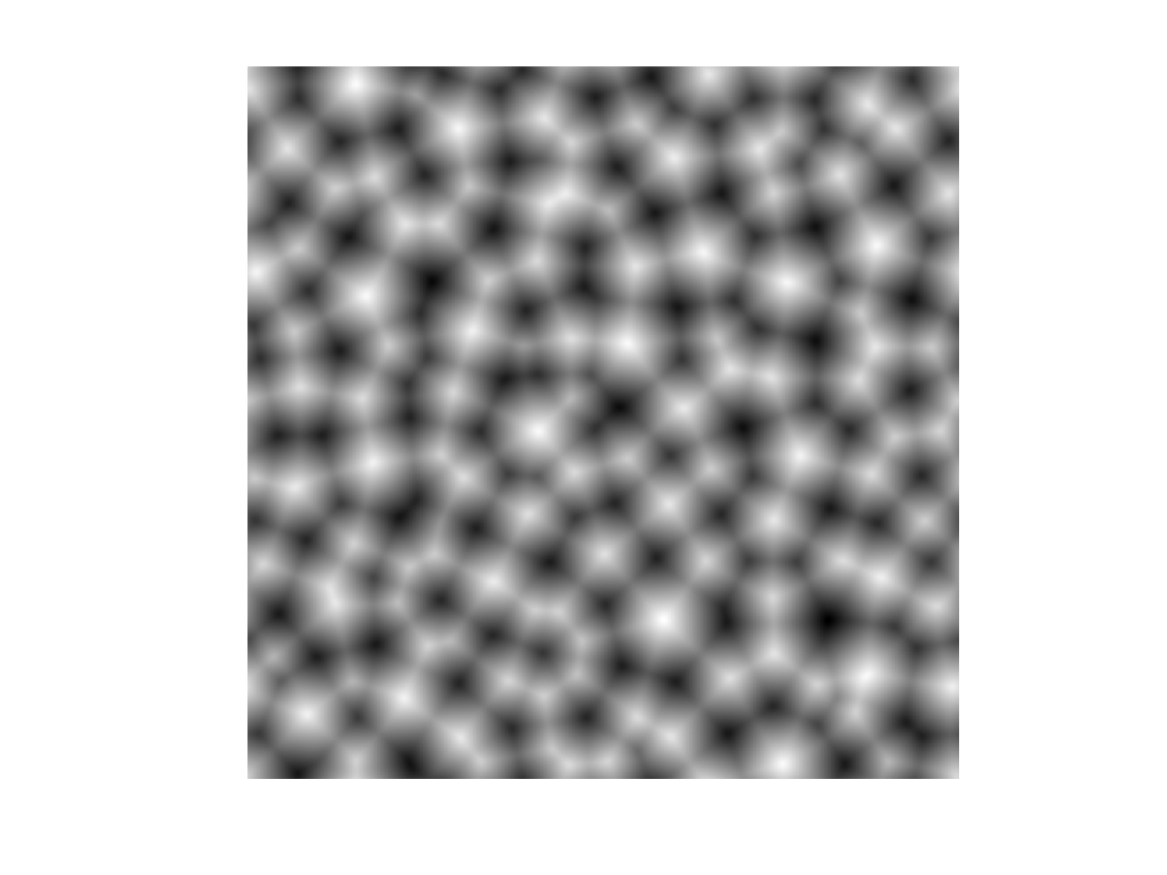}
			\end{minipage}
			\begin{minipage}{0.23\textwidth}
				\centering
				\includegraphics[width=4.5cm]{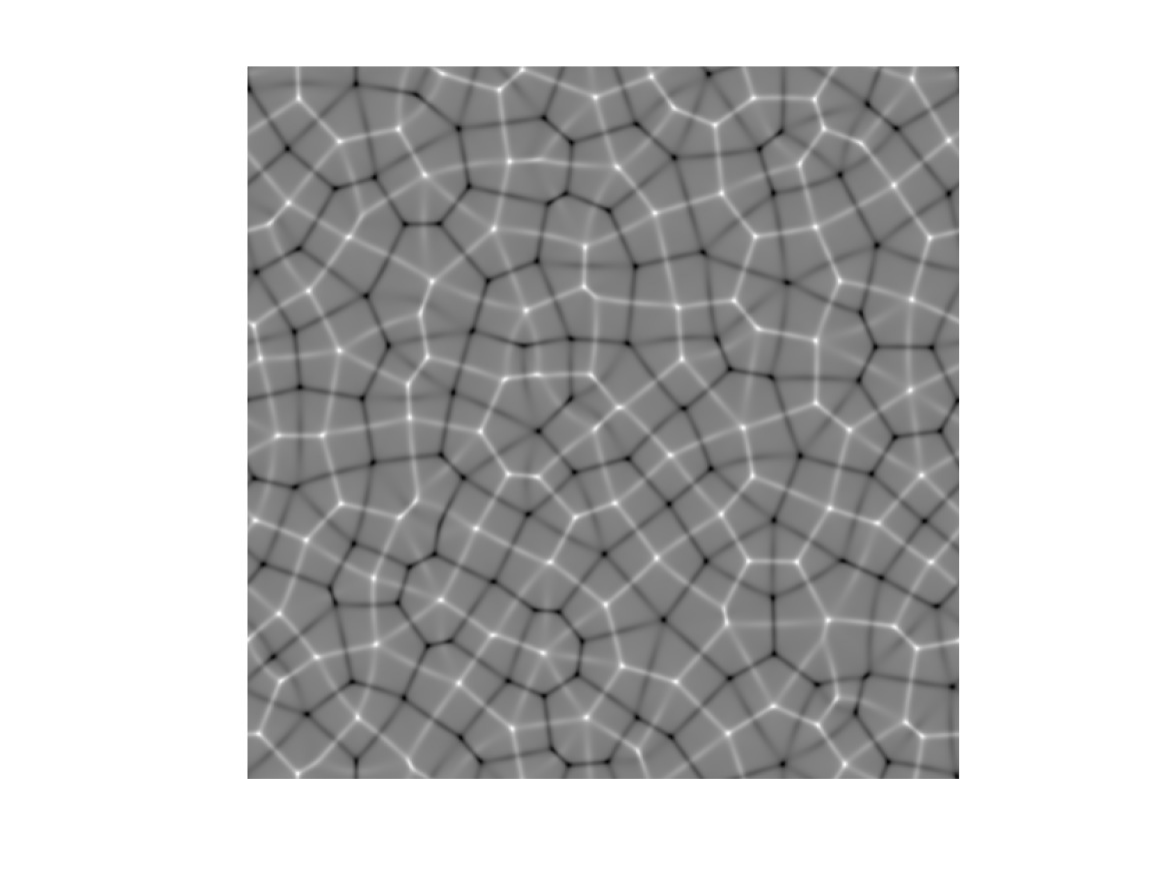}	
			\end{minipage}
		}
	\subfigure[$t = 50$]{
			\begin{minipage}{0.23\textwidth}
				\centering
				\includegraphics[width=4.5cm]{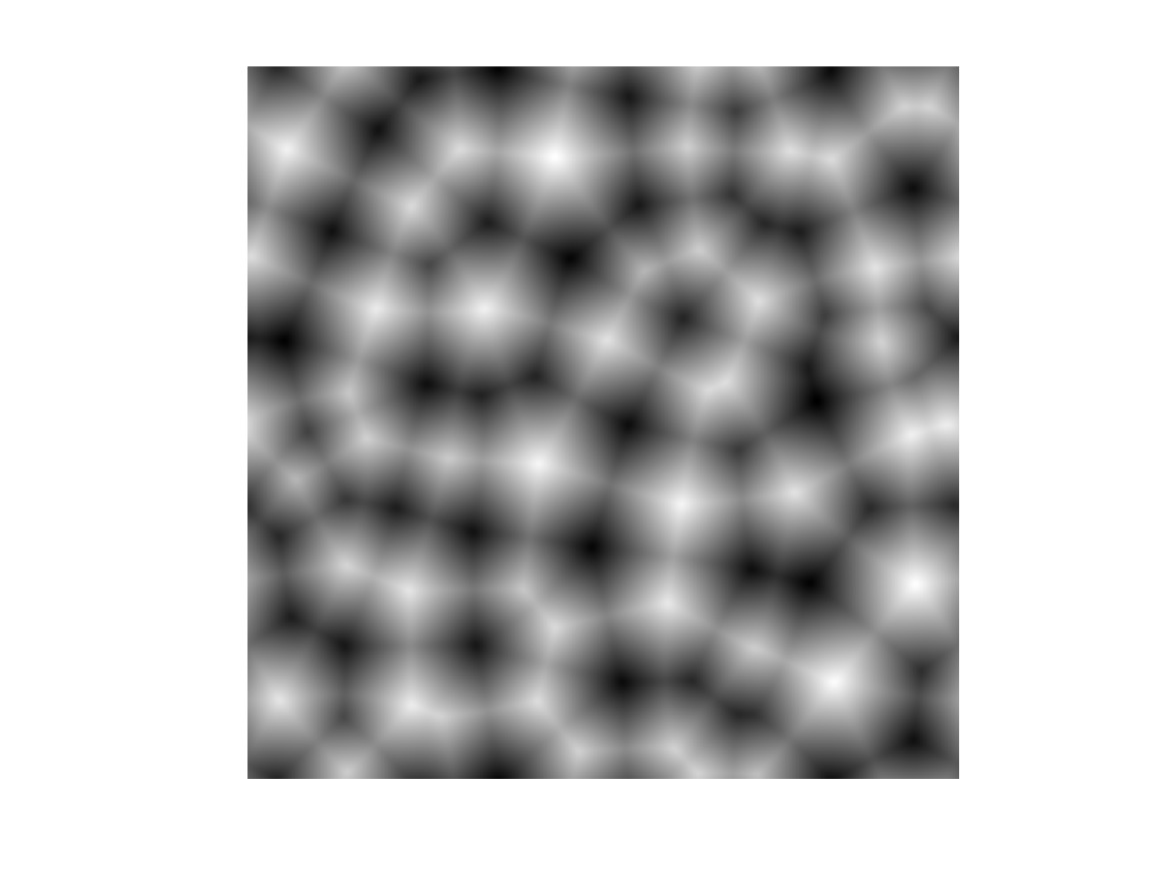}
			\end{minipage}
			\begin{minipage}{0.23\textwidth}
				\centering
				\includegraphics[width=4.5cm]{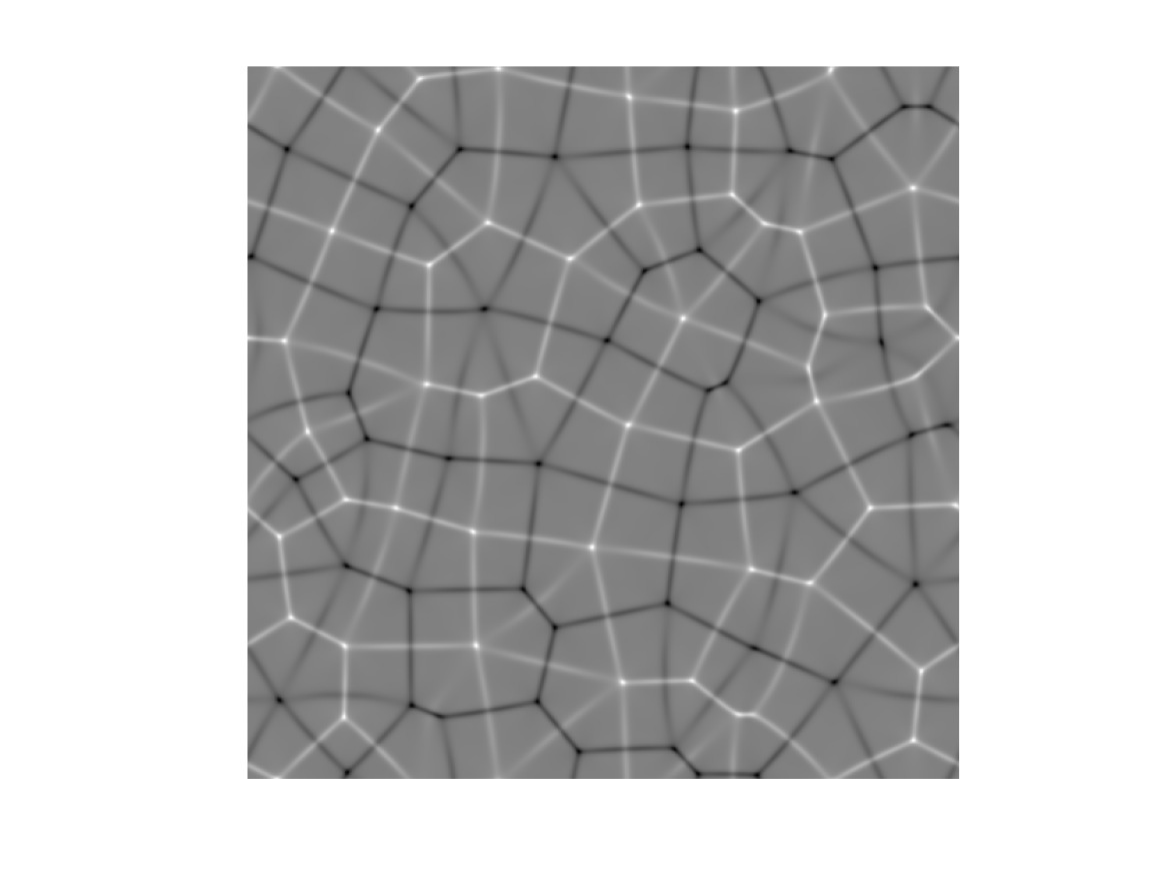}
			\end{minipage}
		}
		\subfigure[$t = 100$]{
			\begin{minipage}{0.23\textwidth}
				\centering
				\includegraphics[width=4.5cm]{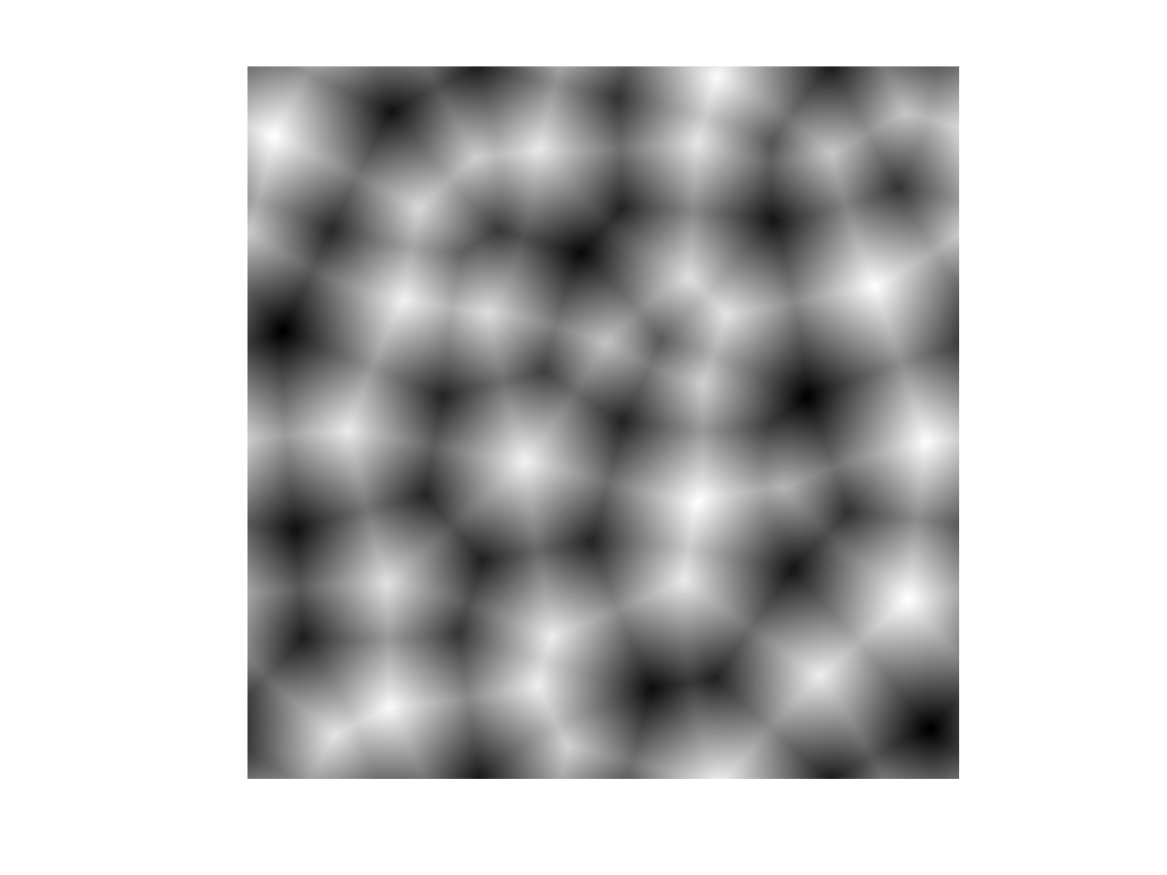}
			\end{minipage}
			\begin{minipage}{0.23\textwidth}
				\centering
				\includegraphics[width=4.5cm]{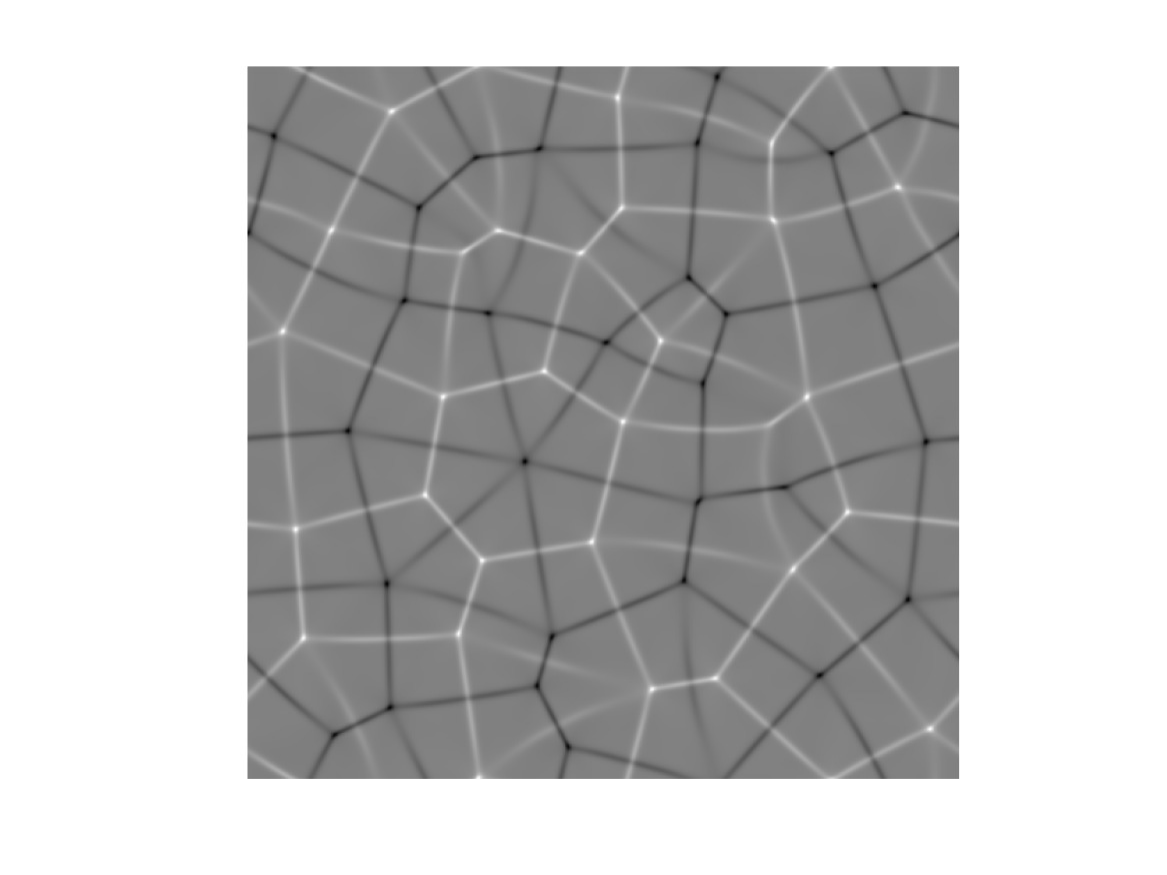}
			\end{minipage}
		}
	\subfigure[$t = 500$]{
			\begin{minipage}{0.23\textwidth}
				\centering
				\includegraphics[width=4.5cm]{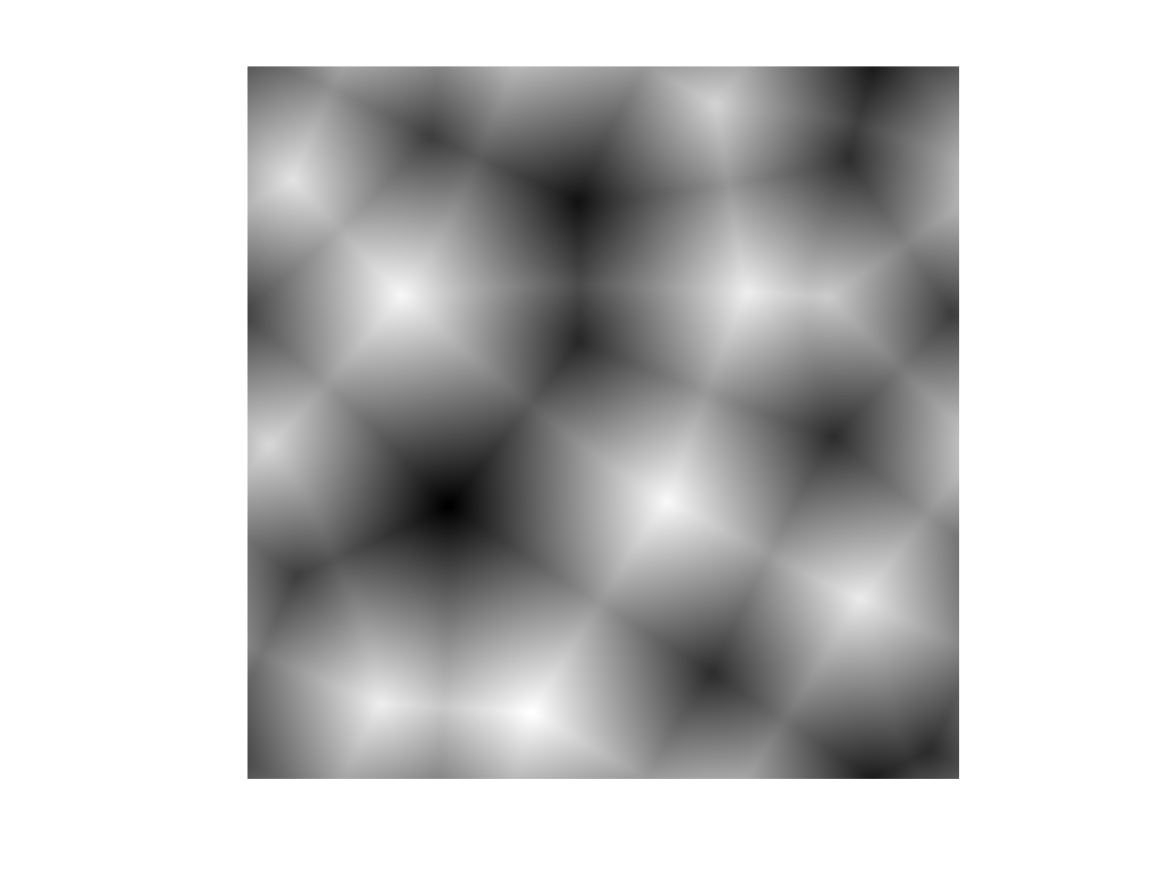}
			\end{minipage}
			\begin{minipage}{0.23\textwidth}
				\centering
				\includegraphics[width=4.5cm]{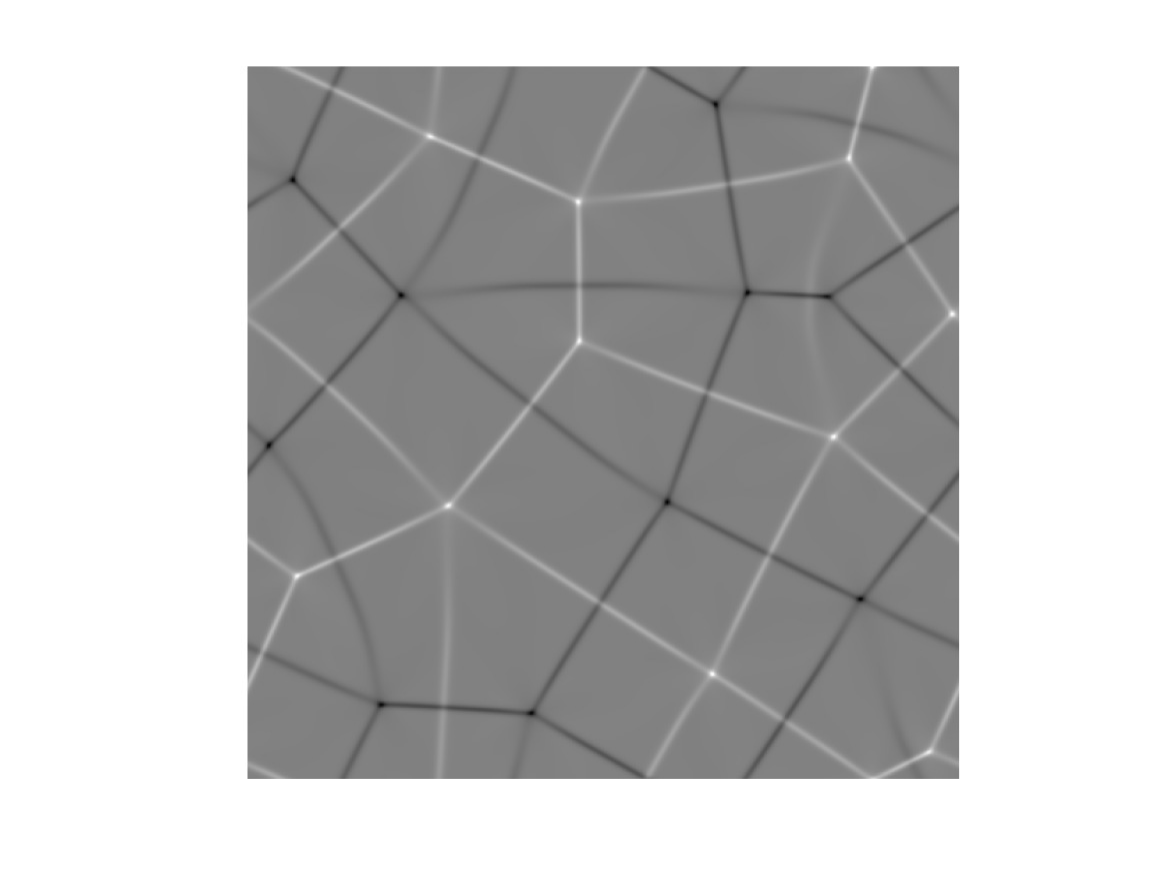}
			\end{minipage}
		}
			\caption{(Example \ref{ex:MBE}.) The isolines of the numerical solutions for the height function $\phi$ and its $\Delta \phi$ for the thin film epitaxy growth model with slope selection, using a random initial condition. In each subfigure, the left side represents $\phi$, while the right side represents $\Delta \phi$.}
			\label{Fig:MBE-with-slope-2D-PS-SAV}
		\end{figure}
	
		\begin{figure}[htbp]
			\centering
		\subfigure[$t = 0$]{
			\begin{minipage}{0.23\textwidth}
				\centering
				\includegraphics[width=4.5cm]{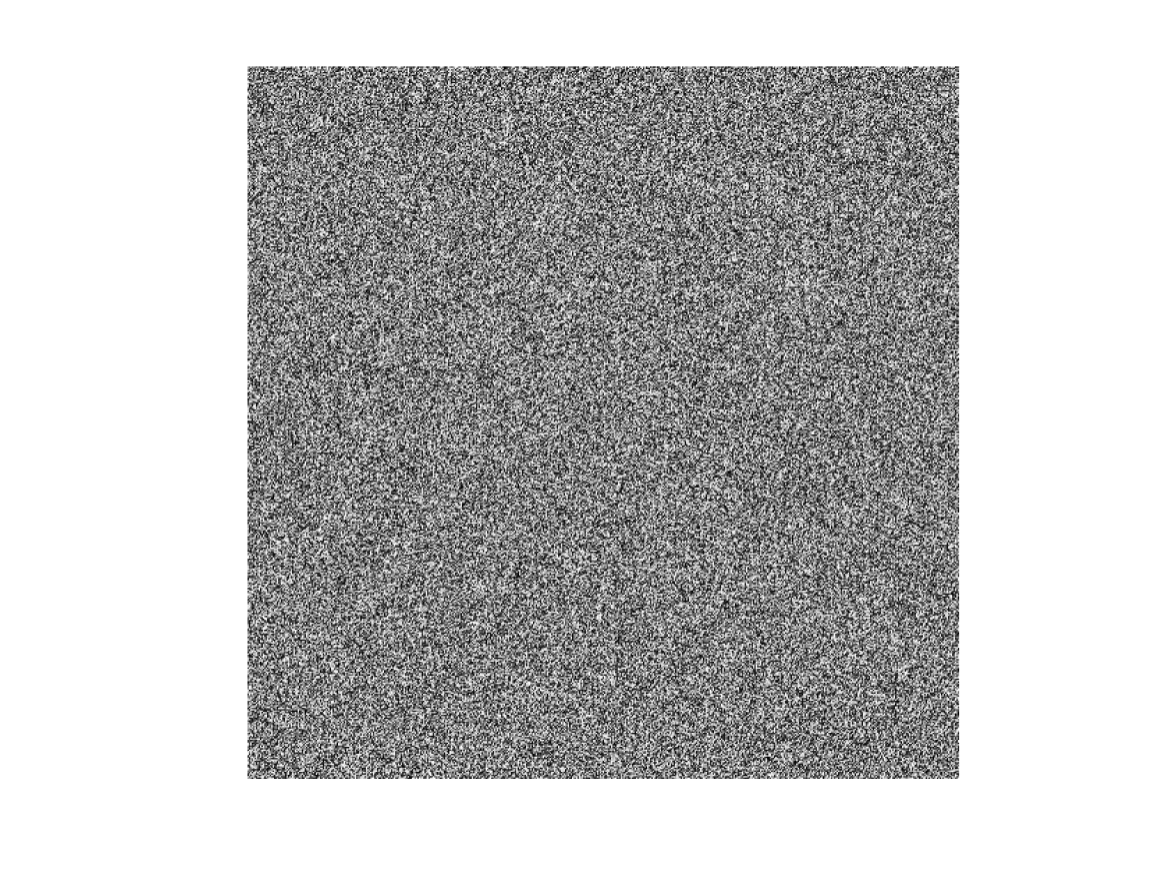}
			\end{minipage}
			\begin{minipage}{0.23\textwidth}
				\centering
				\includegraphics[width=4.5cm]{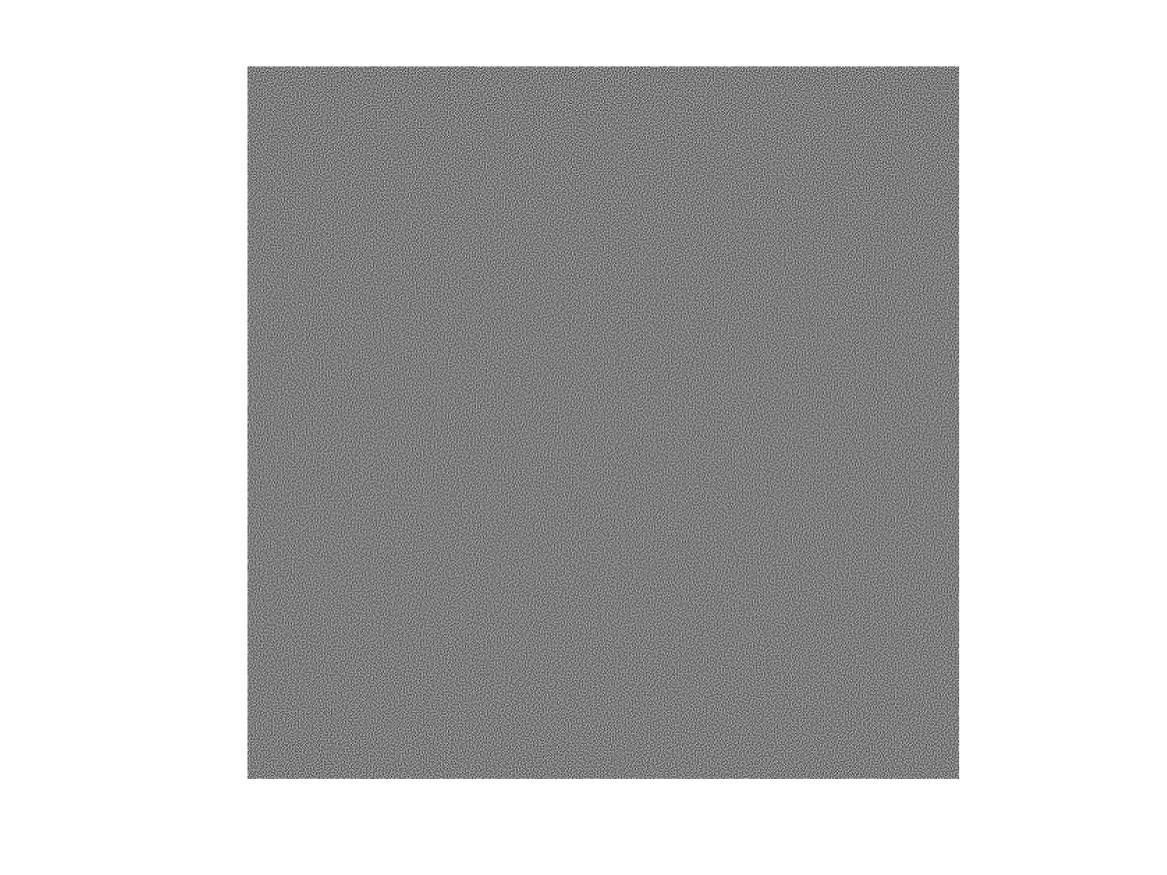}
			\end{minipage}
		}
	\subfigure[$t = 1$]{
			\begin{minipage}{0.23\textwidth}
				\centering
				\includegraphics[width=4.5cm]{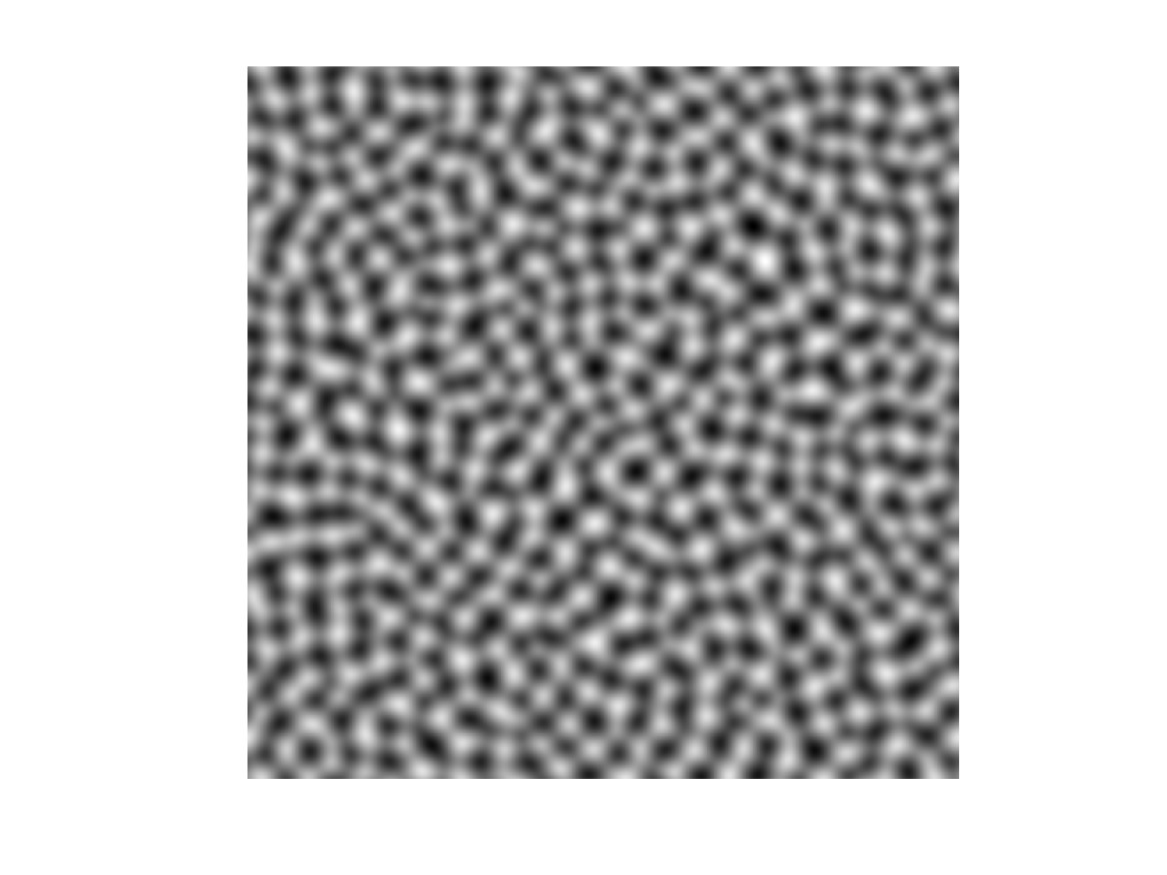}
			\end{minipage}
			\begin{minipage}{0.23\textwidth}
				\centering
				\includegraphics[width=4.5cm]{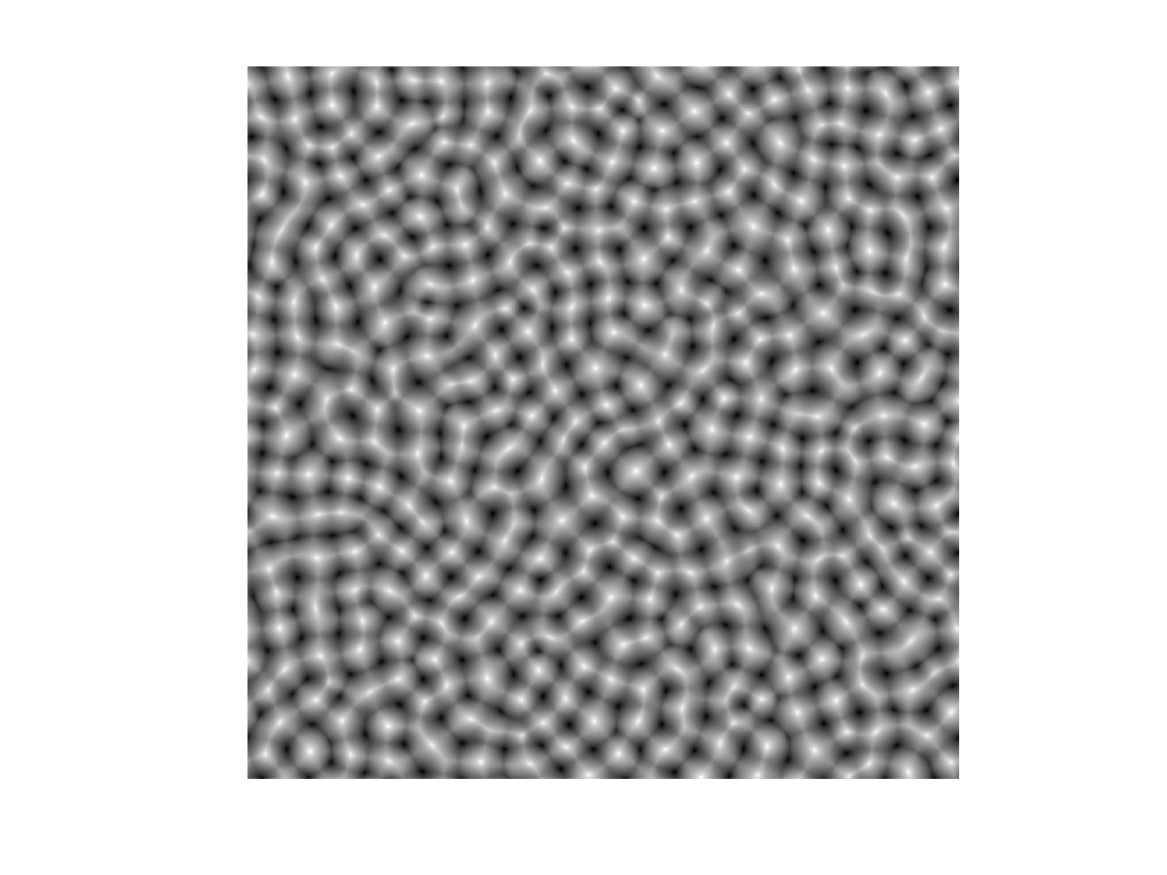}
			\end{minipage}
		}
	\subfigure[$t = 10$]{
			\begin{minipage}{0.23\textwidth}
				\centering
				\includegraphics[width=4.5cm]{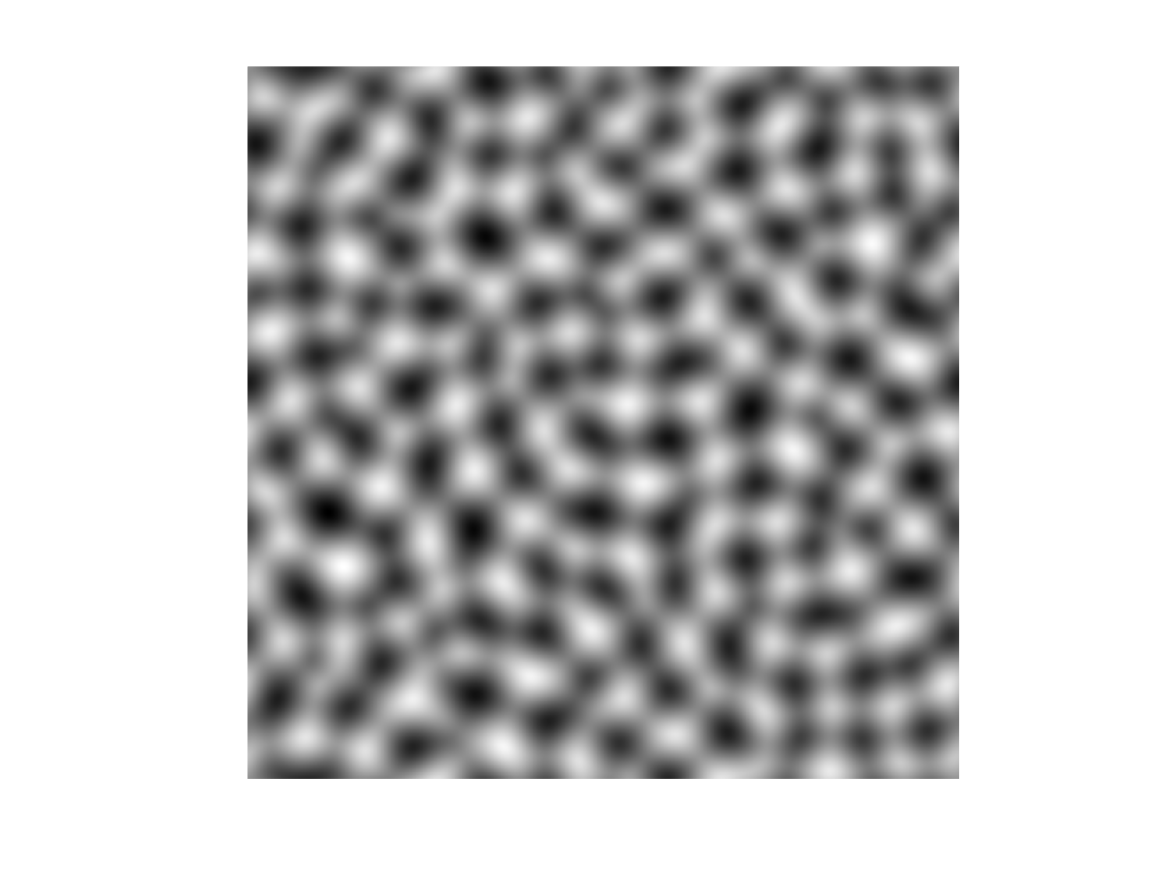}
			\end{minipage}
			\begin{minipage}{0.23\textwidth}
				\centering
				\includegraphics[width=4.5cm]{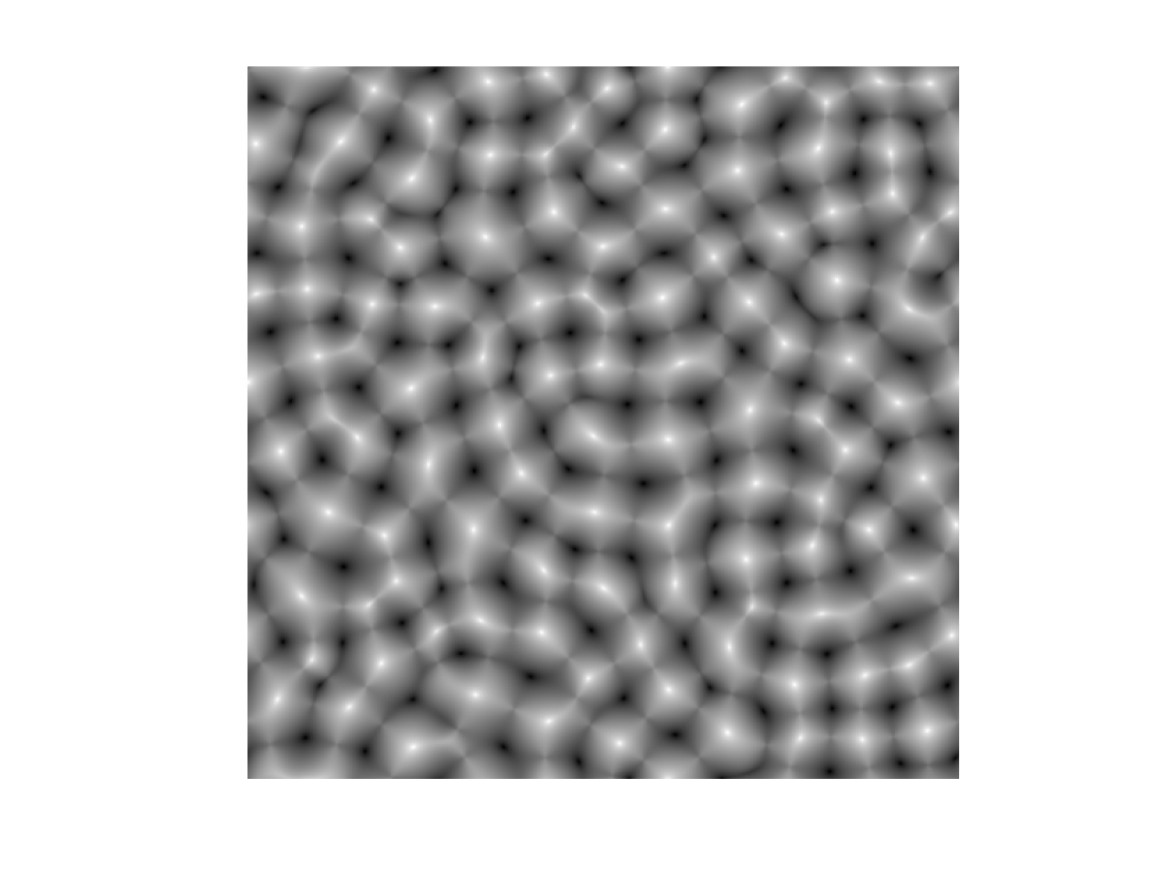}	
			\end{minipage}
		}
	\subfigure[$t = 50$]{
			\begin{minipage}{0.23\textwidth}
				\centering
				\includegraphics[width=4.5cm]{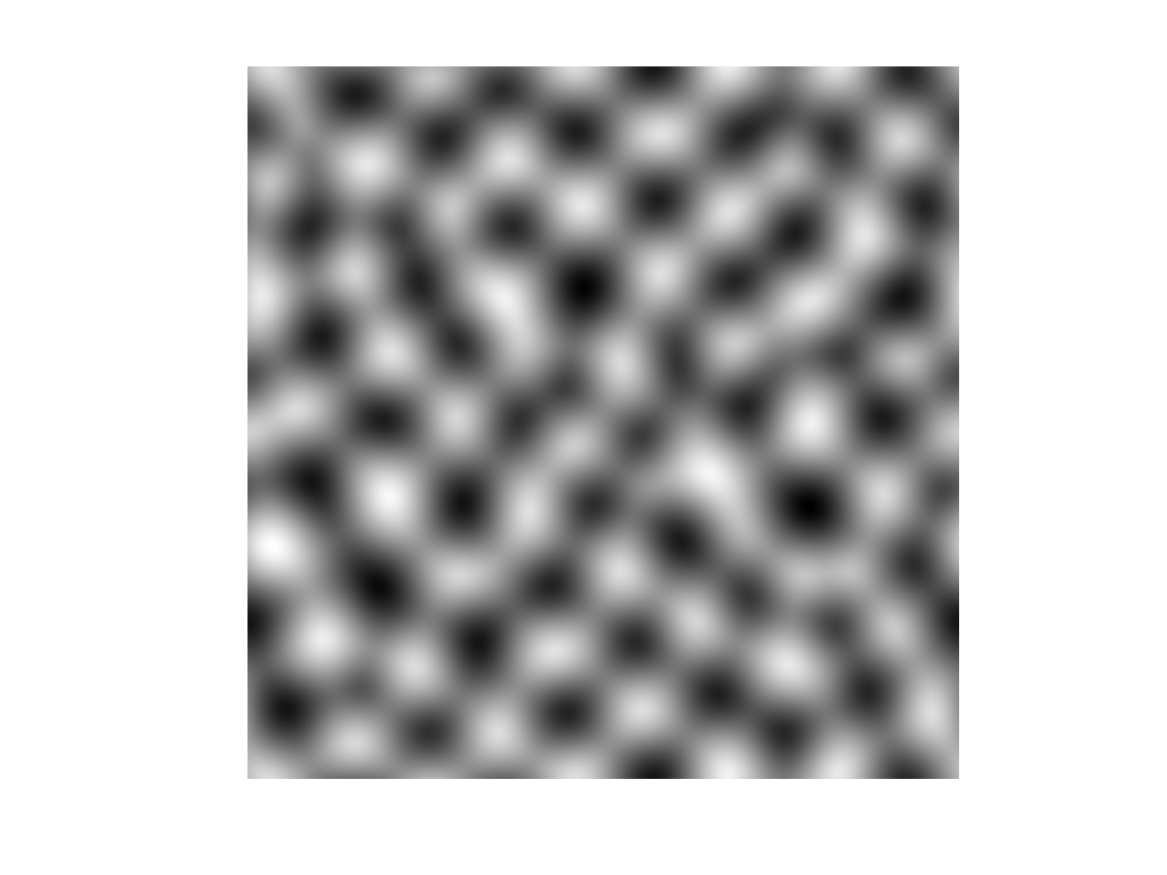}
			\end{minipage}
			\begin{minipage}{0.23\textwidth}
				\centering
				\includegraphics[width=4.5cm]{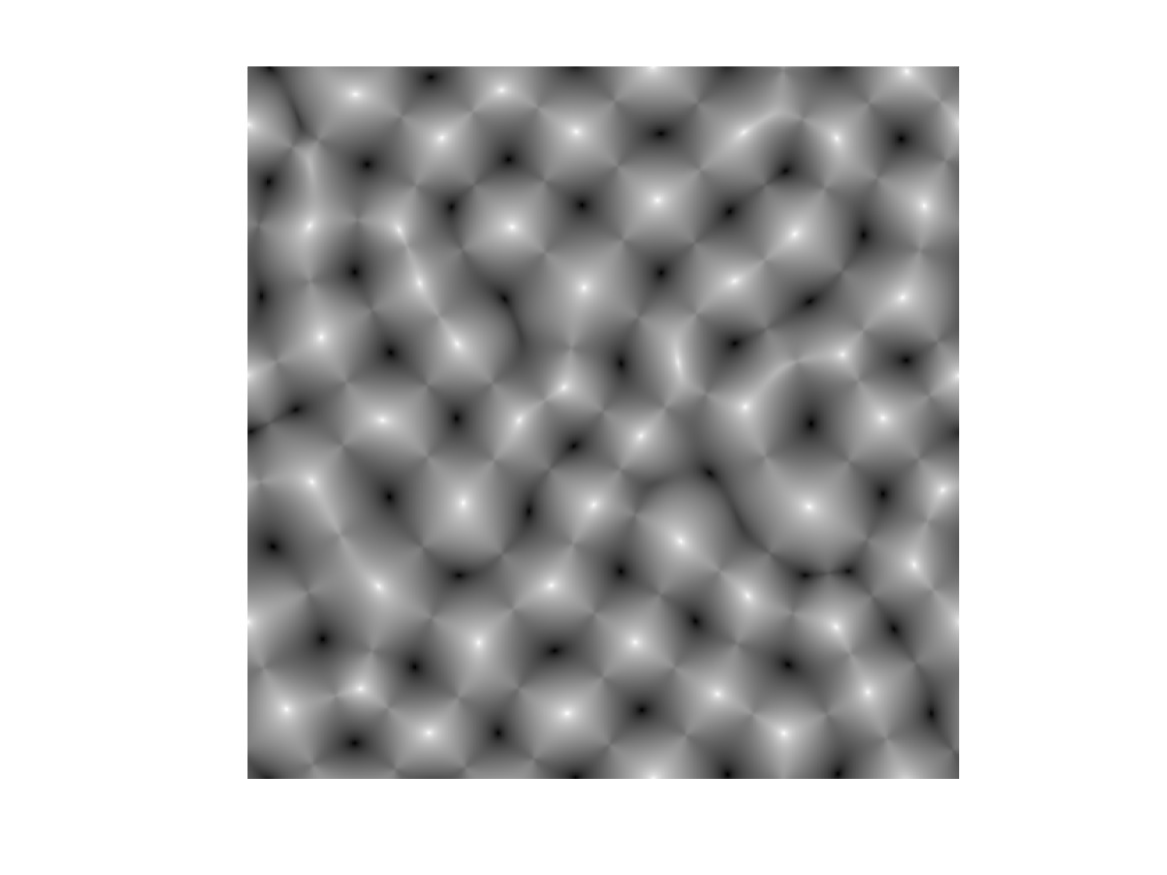}
			\end{minipage}
		}
	\subfigure[$t = 100$]{
			\begin{minipage}{0.23\textwidth}
				\centering
				\includegraphics[width=4.5cm]{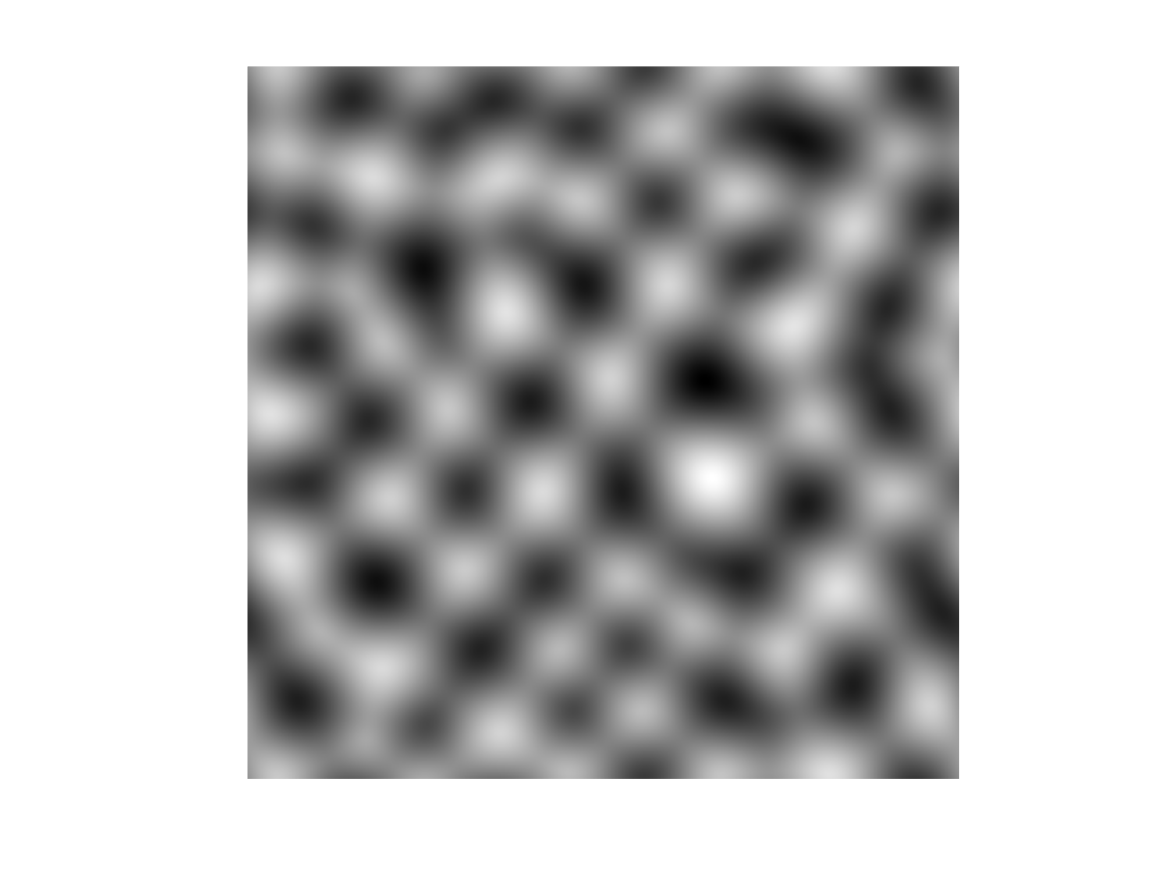}
			\end{minipage}
			\begin{minipage}{0.23\textwidth}
				\centering
				\includegraphics[width=4.5cm]{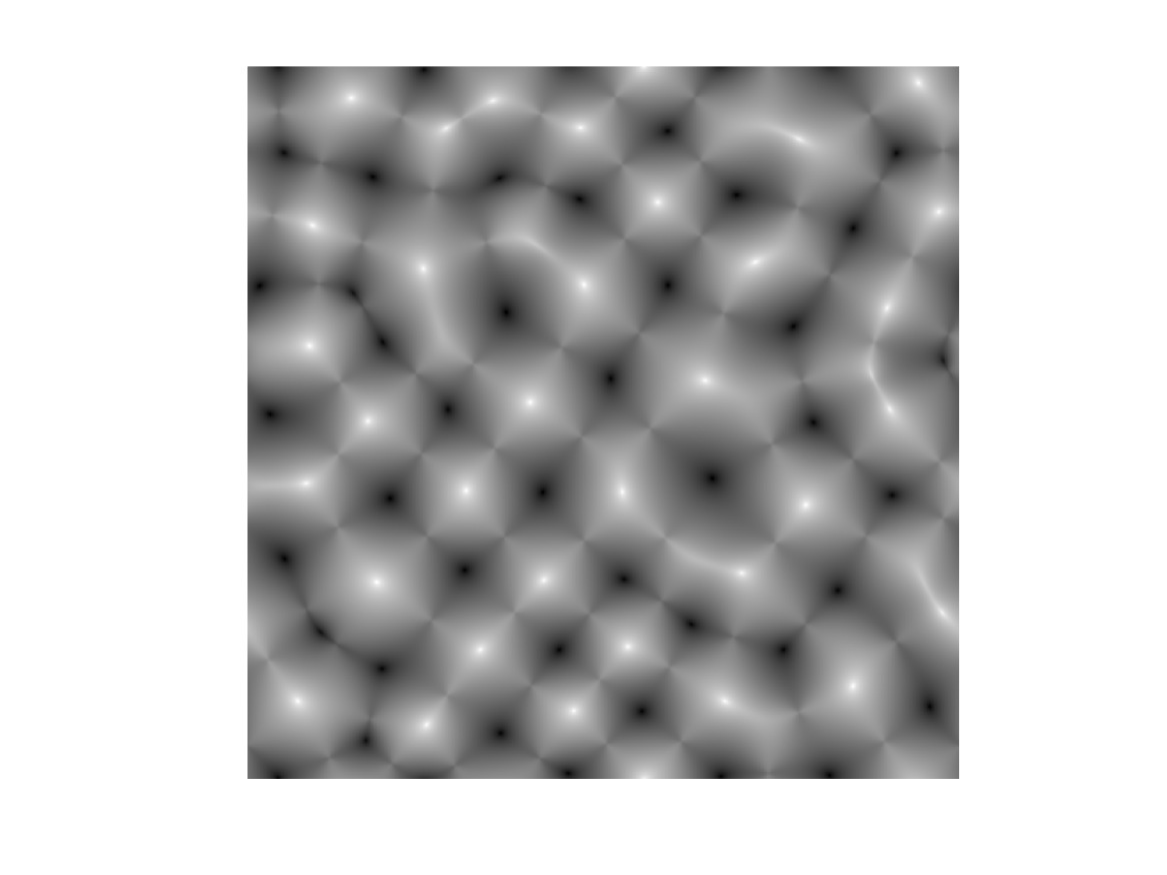}
			\end{minipage}
		}
	\subfigure[$t = 500$]{
			\begin{minipage}{0.23\textwidth}
				\centering
				\includegraphics[width=4.5cm]{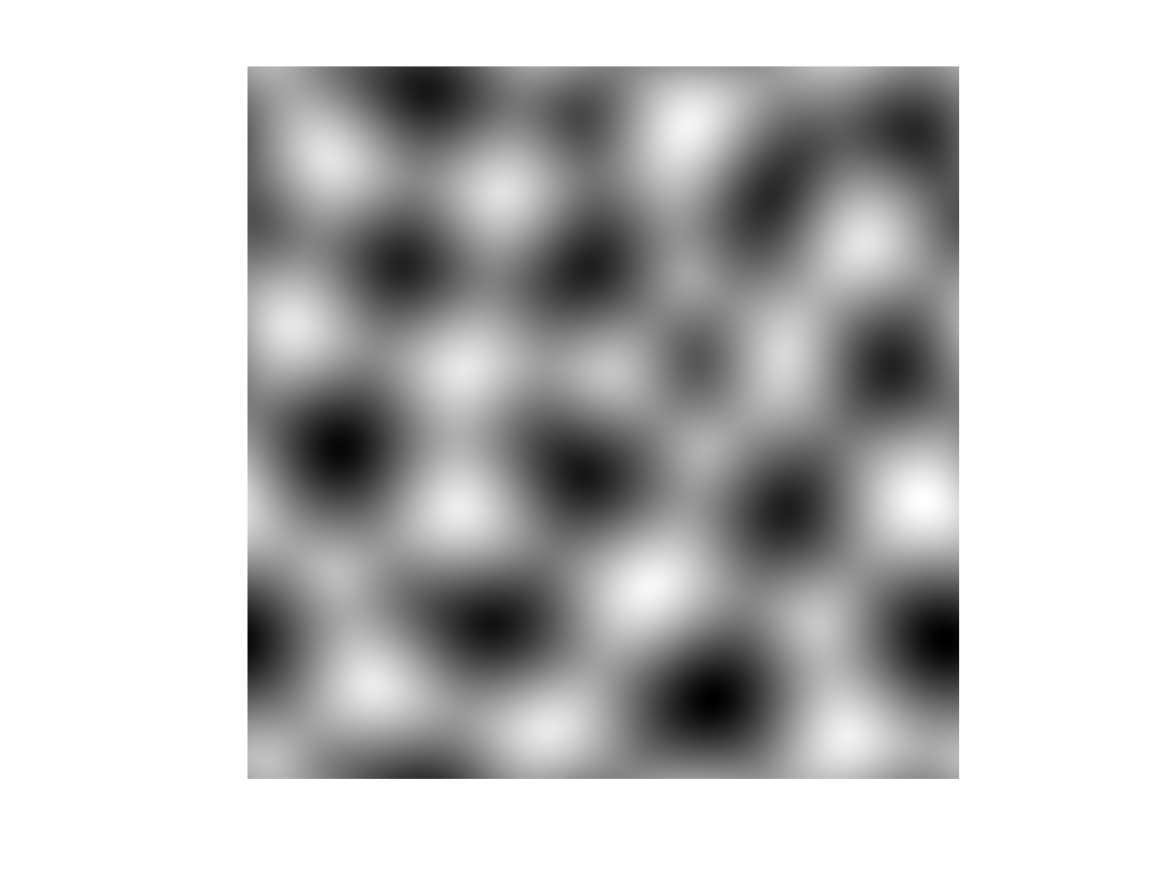}
			\end{minipage}
			\begin{minipage}{0.23\textwidth}
				\centering
				\includegraphics[width=4.5cm]{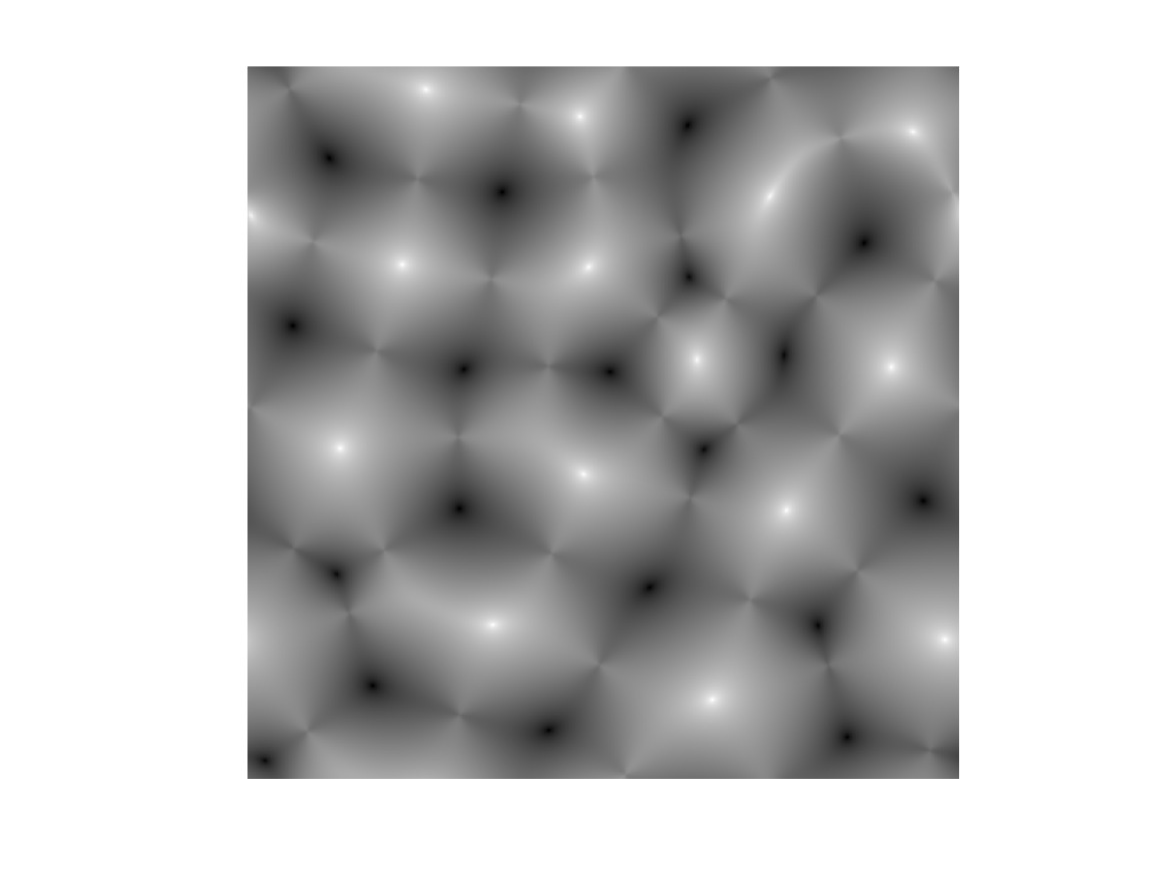}
			\end{minipage}
		}
			\caption{(Example \ref{ex:MBE}.) The isolines of the numerical solutions for the height function $\phi$ and its $\Delta \phi$ for the thin film epitaxy growth model without slope selection, using a random initial condition. In each subfigure, the left side represents $\phi$, while the right side represents $\Delta \phi$.}
			\label{Fig:MBE-without-slope-2D-PS-SAV}
		\end{figure}
	
		\begin{figure}[htbp]
		\centering
		\begin{minipage}{0.4\textwidth}
			\centering
			\includegraphics[width=5.3cm]{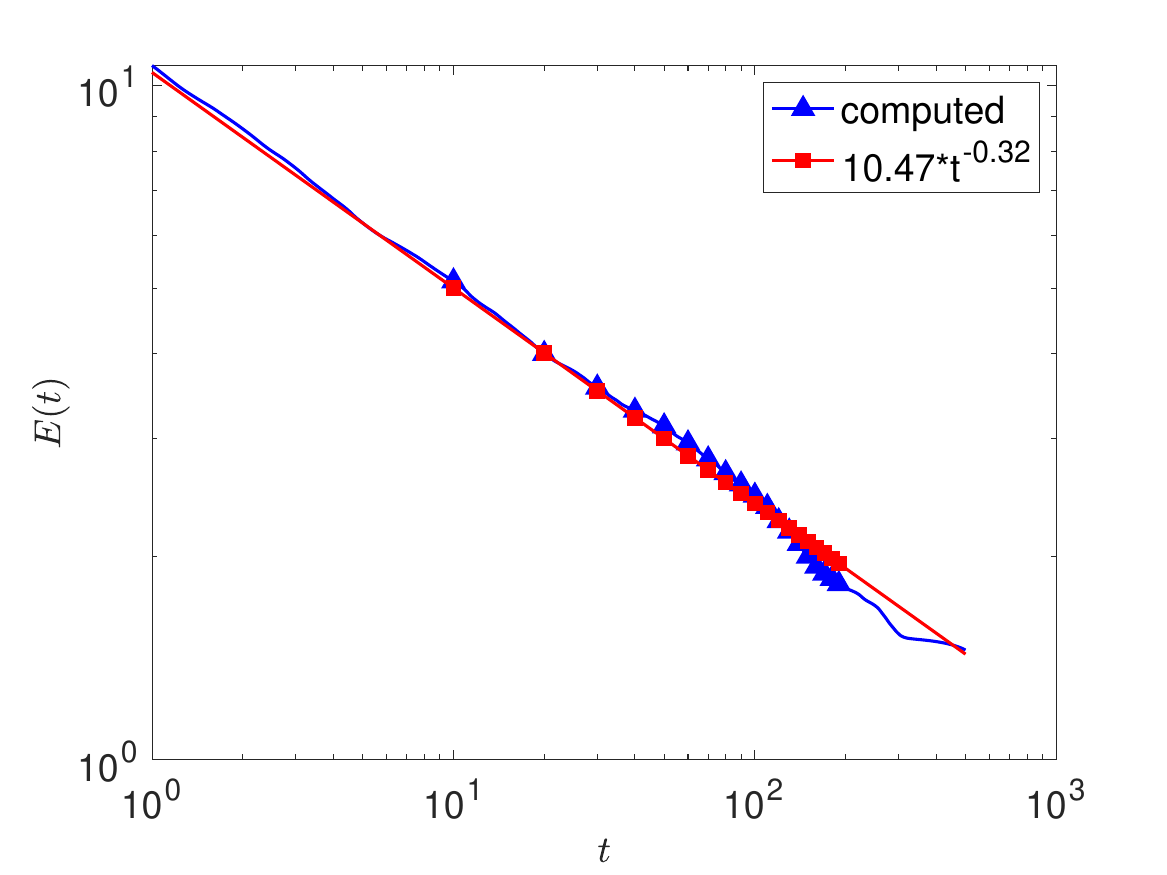}
		\end{minipage}
		\begin{minipage}{0.4\textwidth}
			\centering
			\includegraphics[width=5.3cm]{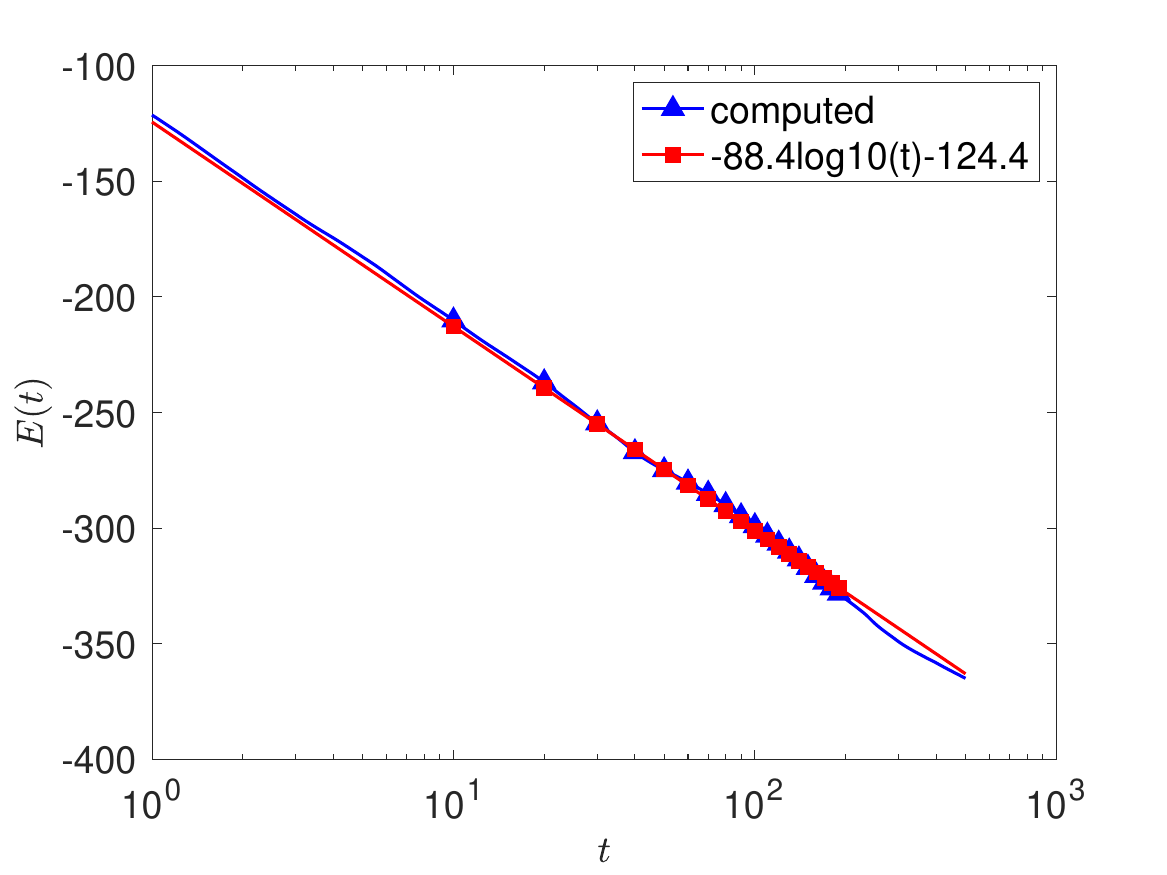}
		\end{minipage}
		\caption{(Example \ref{ex:MBE}.) First: the log-log plots of the free energy for the thin film epitaxy growth model with slope selection. Second: the semi-log plots of the free energy for the thin film epitaxy growth model without slope selection.}
		\label{Fig:MBE-energy}
	\end{figure}

\end{example}

\section*{Acknowledgement}
No potential conflict of interest was reported by the author. We would like to acknowledge the assistance of volunteers in putting together this example manuscript and supplement.
\bibliographystyle{siamplain}
\bibliography{Reference}

\end{document}